\newcommand\DoToC{%
  \startcontents
  \printcontents{}{1}{\textbf{Contents}\vskip3pt\hrule\vskip5pt}
  \vskip3pt\hrule\vskip5pt
}
\newcommand{\E}{\mathbb{E}}
\renewcommand{\P}{\mathbb{P}}
\newcommand{\F}{\mathcal{F}}
\newcommand{\R}{\mathbb{R}}
\newcommand{\ind}{\perp\!\!\!\!\perp}
\renewcommand{\d}{\mathrm{d}}
\newcommand{\V}{\mathbb{V}}
\newcommand{\cov}{\mathbb{V}}
\newcommand{\G}{\mathcal{G}}
\renewcommand{\H}{\mathcal{H}}
\newtheorem{prop}{Proposition}
\newtheorem{thm}{Theorem}
\newtheorem{lem}{Lemma}
\newtheorem{cor}{Corollary}
\title{On a theory of martingales for censoring}
\author[1,2]{Benjamin R. Baer\footnote{Corresponding author: benjamin.baer@st-andrews.ac.uk}}
\author[2]{Robert L. Strawderman}
\affil[1]{School of Mathematics and Statistics, University of St Andrews, St Andrews, Scotland, KY16 9SS}
\affil[2]{Department of Biostatistics and Computational Biology, University of Rochester, Rochester, NY, USA, 14642}
\begin{document}

\maketitle

\begin{abstract}
    A theory of martingales for censoring is developed. The Doob-Meyer martingale is shown to be inadequate in general, and a repaired martingale is proposed with a non-predictable centering term. Associated martingale transforms, variation processes, and covariation processes are developed based on a measure of half-predictability that generalizes predictability. The development is applied to study the Kaplan Meier estimator. 
\end{abstract}

\section{Introduction}
\label{sec:intro}

A great deal of work in survival analysis assumes that the failure and censoring distributions are absolutely continuous with respect to Lebesgue measure. Such assumptions are pragmatic, stemming primarily from the reasonable expectations of sufficient realism and the relative mathematical ease. 
Additionally, a substantial amount of methodological and theoretical work in survival analysis %
assumes that the failure and censoring distributions are discrete as a way to avoid delicate probabilistic arguments. Given these considerations, we might anticipate that classical survival analysis theory is well polished, especially in the discrete case. 

The seminal work of \cite{aalen78} brought the Doob-Meyer theory of martingales derived from counting processes to the forefront of methodological research in survival analysis; there has been a tremendous amount of work published since that time focusing on generalizations and extensions of Aalen's work. Especially influential book-length treatments of this still-active research area include \cite{fleming1991counting}, \cite{andersen1993statistical}, and \cite{kalbfleisch2011statistical}.

The purpose of this work is to reveal and repair critical shortcomings with the Doob-Meyer martingale theory surrounding the censoring distribution when there is no assumption of absolute continuity. Although the censoring distribution does not have a central focus in many areas of survival analysis, it arises in the study of estimation of functionals of the failure distribution. For example, in their seminal textbooks on efficiency theory for censored and missing data, \citet[Theorem 1.1]{van2003unified} and \citet[Section 9.3]{tsiatis2006semiparametric} calculate the censoring tangent space assuming discrete time and independent censoring. Results in later sections of this work reveal that each implicitly assumed that their failure and censoring distributions do not share discontinuities; indeed, their calculations are based on a stochastic process for censoring that only reduces to a martingale under this assumption but is generally only an approximation (cf.\ \cref{cor:marting-sharp-coerc}). 
In fact, the authors are only aware of
a few works that partially address the aforementioned deficiency are brief comments by  \citet[p. 36]{gill1980censoring},
\citet[p. 170]{bakry1994lectures}, and 
very recently some developments in
\citet{parner2023regression}. 
See also the recent work of \citet{baer2023recurrent} and \citet{strawderman2023volterra} which avoid relying on approximations that are only valid under a no-shared-discontinuity assumption; although martingale theory itself plays no direct role in these developments. 

In many respects, this work could have appeared decades ago, either as part of or shortly after the insightful developments in the monograph \emph{Censoring and Stochastic Integration} by \citet{gill1980censoring}; nevertheless the results are clearly still timely given that estimation in survival analysis remains an active area of research. 
\cref{sec:set-up} defines the random variables under study. \cref{sec:basic} sets the stage by constructing the Doob-Meyer martingale for censoring, developing a repaired martingale, and comparing the two. Interestingly, both martingales represented centered versions of the same counting process. 
Martingale transforms and variation and covariation processes with respect to the repaired martingale are studied in \cref{sec:martingale-trans}, 
then adapted in \cref{sec:pred} to produce familiar versions based on standard stochastic process theory and predictability. 
Three applications of the theory are presented in \cref{sec:apps} concerning 
estimators of the survival function. 
\cref{sec:disc} concludes with a discussion. 
All proofs are in the appendix.

\section{Definitions and Notation}
\label{sec:set-up}
\subsection{Observable and latent random variables}

Define the time at-risk $X>0$ and the failure indicator $\Delta \in \{0,1\}$, and
let these be jointly distributed random variables with distribution $\P$ and associated expectation $\E$. Assume $X<\infty$. 

Although not strictly necessary for most
developments, we shall also consider latent random variables $T,C>0$ satisfying  $X = T \wedge C$ and $\Delta=I(T \leq C)$ which represent the potential failure and censoring time, respectively. 
Denote the joint
distribution of these latent variables as $\P^*$ with associated expectation $\E^*;$ the observed data distribution $\P$ is then induced by the latent variable distribution $\P^*$. 
Unless otherwise specified, we do not assume $T$ and $C$ are independent. 

\subsection{Key definitions}

For $u > 0$, define the stochastic processes
\begin{align*}
    N_T(u) := I( X \leq u , \Delta=1 ), \hspace{5mm}
    N_C(u) & := I( X \leq u , \Delta=0 ),
\end{align*}
where $N_T$ (resp. $N_C$) is the observed failure (resp. censoring) process. 
Define $\delta$ as the increment operator so that $\delta N_C(u) = N_C(u) - N_C(u-)$. 
For $t \geq 0$, define the corresponding natural filtration 
\begin{equation*}
    \F_t = \sigma \{N_T(u), N_C(u) \,:\, 0 < u \leq t\}.
\end{equation*}

Next, define the stochastic processes
\begin{align*}
    Y^{\sharp}(u) := I\{ N_C(u-)=0, N_T(u-)=0\}, \hspace{5mm}
    Y^{\dagger}(u) &:= I\{ N_C(u-)=0, N_T(u)=0 \}. 
\end{align*}
For every $u > 0$, 
the usual at-risk process is given by
$Y^{\sharp}(u) = I(X \geq u);$ the
modified at-risk process $Y^{\dagger}$
is instead given by
\[
Y^{\dagger}(u) \, = \, Y^\sharp(u) -
\delta N_T(u) \, = \, 
Y^\sharp(u+) + \delta N_C(u).
\]
As expected, $Y^{\sharp}$ is left-continuous; 
however, the modified at-risk process $Y^{\dagger}$ is neither left-continuous nor right-continuous.

The process $Y^{\dagger}$ was introduced in \citet{baer2023recurrent} and plays an important role throughout this paper. Expressed in terms of the latent variables, the modified at-risk process $Y^{\dagger}(u) = I(T>u, C\geq u);$ hence, at time $u>0$, this process can be informally interpreted as capturing the risk of being censored at $u$ when 
the risk for failure has ``passed'' (i.e., lies beyond $u$). It will become clear in later developments that
$Y^{\dagger}$ plays
an important role in the representation and
estimation of functionals of interest in survival analysis. %

\section{Basic Theoretical Developments}
\label{sec:basic}

In this section we construct the Doob-Meyer martingale for censoring, develop a repaired martingale, and compare the two. 

\subsection{Doob-Meyer martingales}

Define the cause-specific cumulative functions 
\begin{align*}
    \Lambda_T^{\sharp}(t) := \int_{(0, t]} \frac{\d \E \{N_T(u)\} }{\E\{Y^{\sharp}(u)\}}, \hspace{5mm}
    \Lambda_C^{\sharp}(t) & := \int_{(0, t]} \frac{\d \E \{N_C(u)\} }{\E\{Y^{\sharp}(u)\}},
\end{align*}
which are well defined for all $t>0$ through the usual $0/0=0$ convention. 
Finally, define for $t \geq 0$ the familiar stochastic processes 
\begin{align*}
    M_T^{\sharp}(t) := N_T(t) - \int_{(0,t]} Y^{\sharp}(u) \,\d \Lambda_T^{\sharp}(u), \hspace{5mm}
    M_C^{\sharp}(t) & := N_C(t) - \int_{(0,t]} Y^{\sharp}(u) \,\d \Lambda_C^{\sharp}(u).
\end{align*}
The following proposition states a result that is well-known.
\begin{prop}
\label{prop:sharp-martingale}
    $M_T^{\sharp}$ and $M_C^{\sharp}$ are the unique Doob-Meyer $\F_t$-martingales.
\end{prop}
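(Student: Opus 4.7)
The plan is to verify both pieces of the Doob-Meyer decomposition for $N_T$ and $N_C$: that the proposed compensators $A_T^{\sharp}(t) := \int_{(0,t]} Y^{\sharp}(u)\,\d\Lambda_T^{\sharp}(u)$ and $A_C^{\sharp}(t) := \int_{(0,t]} Y^{\sharp}(u)\,\d\Lambda_C^{\sharp}(u)$ are $\F_t$-predictable and nondecreasing, and that $M_T^{\sharp} = N_T - A_T^{\sharp}$ and $M_C^{\sharp} = N_C - A_C^{\sharp}$ are $\F_t$-martingales. Uniqueness will then follow immediately from the Doob-Meyer decomposition theorem, since $N_T$ and $N_C$ are bounded nondecreasing processes, hence submartingales of class D.

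Predictability is the easy half. The process $Y^{\sharp}$ is left-continuous and adapted, hence predictable; the cumulative functions $\Lambda_T^{\sharp}$ and $\Lambda_C^{\sharp}$ are deterministic (they depend only on $\P$); so $A_T^{\sharp}$ and $A_C^{\sharp}$ are predictable, nondecreasing, and (as one checks) integrable via $\E\{A_T^{\sharp}(t)\} = \int_{(0,t]} \E\{Y^{\sharp}(u)\}\,\d\Lambda_T^{\sharp}(u) = \E\{N_T(t)\} \leq 1$ using $\E\{Y^{\sharp}(u)\} = \P(X \geq u)$ and the definition of $\Lambda_T^{\sharp}$.

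The substantive step is the martingale property for $M_T^{\sharp}$ (the argument for $M_C^{\sharp}$ is parallel). I would exploit that $\F_t$ is the natural filtration of the single-jump process $(X,\Delta)$, so on the event $\{X \leq s\}$ the $\sigma$-algebra $\F_s$ knows $(X,\Delta)$ while on $\{X > s\}$ it is trivial. On $\{X \leq s\}$ both $N_T(t) - N_T(s)$ and $A_T^{\sharp}(t) - A_T^{\sharp}(s)$ vanish since $Y^{\sharp}(u) = 0$ for $u > X$, so it suffices to show
\[
\E\bigl[N_T(t) - N_T(s) \,\big|\, X > s\bigr] \, = \, \E\bigl[A_T^{\sharp}(t) - A_T^{\sharp}(s) \,\big|\, X > s\bigr].
\]
The left side is $[F_T(t) - F_T(s)]/S(s)$ where $F_T(u) = \P(X \leq u,\Delta=1)$ and $S(u) = \P(X > u)$. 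For the right side, a Fubini interchange yields $\int_{(s,t]} \P(X \geq u \mid X > s)\,\d\Lambda_T^{\sharp}(u) = \int_{(s,t]} [S(u-)/S(s)]\,\d\Lambda_T^{\sharp}(u)$, which collapses to the same expression since $\d\Lambda_T^{\sharp}(u) = \d F_T(u)/S(u-)$ by definition.

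The main obstacle is being meticulous with left- versus right-limits and the treatment of atoms of $X$, since absolute continuity is not assumed. In particular, one must be careful to distinguish $S(u-) = \P(X \geq u)$ from $S(u) = \P(X > u)$ throughout and to handle the jump of $A_T^{\sharp}$ at $u = X$ correctly; the argument above is structured so that these distinctions cancel cleanly. These same subtleties are precisely what motivate the repaired martingale introduced in the next subsection, so this calculation also serves to isolate exactly where the Doob-Meyer version will prove inadequate once it is probed further.
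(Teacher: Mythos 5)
Your proposal is correct, but it takes a more self-contained route than the paper, whose entire proof is a citation: the result for $M_T^{\sharp}$ is attributed to Theorems 1.3.1 and 1.3.2 of \citet{fleming1991counting}, $M_C^{\sharp}$ to ``parallel calculations,'' and uniqueness to Theorem 1.4.1 there. What you have written out is essentially the computation behind those cited theorems: predictability and integrability of the compensator, plus the no-drift property obtained by exploiting that the natural filtration of the single-jump pair $(X,\Delta)$ is trivial on $\{X>s\}$, so that conditioning on $\F_s$ reduces to conditioning on the event $\{X>s\}$, after which Fubini and the cancellation $\d\Lambda_T^{\sharp}(u)=\d F_T(u)/S(u-)$ with $\P(X\geq u\mid X>s)=S(u-)/S(s)$ give equality of the two sides. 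This is also exactly the argument the paper uses later for its own Theorem~\ref{prop:dagger-martingale}, so your version has the advantage of making the two proofs visibly parallel and of isolating where the $\sharp$ and $\dagger$ constructions diverge; the citation approach buys brevity and defers the measure-theoretic bookkeeping (right-continuity, class D, uniqueness of the predictable compensator) to the standard reference. One small point to make explicit if you write this up: on the set where $\E\{Y^{\sharp}(u)\}=S(u-)=0$ the integrand of $\Lambda_T^{\sharp}$ is defined by the $0/0=0$ convention, so the cancellation $S(u-)/S(u-)$ is not literal there; the step is rescued by noting that $\d F_T$ assigns no mass to that region, which is the same observation the paper makes for $\Lambda_C^{\dagger}$ in \cref{sec:supp:basic-addntl:props}.
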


Since $M_C^{\sharp}$ is the unique Doob-Meyer martingale constructed by centering the observed censoring process $N_C$ with a $\F_t$-predictable compensator, $M_C^{\sharp}$ is often taken to be the relevant martingale when studying estimation of the censoring distribution and related functionals under random censoring. However, as will be shown below, this choice is not always appropriate.

\subsection{Identification in the latent variable model}

Define the latent marginal 
cumulative hazards as
\begin{equation*}  
    \Lambda_T(t) := \int_{(0,t]} \frac{\mathrm{d}\P^*(T \leq u)}{\P^*(T \geq u)}, \hspace{5mm}
    \Lambda_C(t) := \int_{(0,t]} \frac{\d \P^*(C \leq u)}{\P^*(C \geq u)}. 
\end{equation*}
These are functionals of the latent variable distribution $\P^*$ and cannot necessarily be estimated with the observed data in the absence of assumptions. 

Under independent censoring (i.e., $T \ind C$), it is well-established that
$\Lambda_T^{\sharp}(t) = \Lambda_T(t)$ for all $t>0$ of practical interest. %
Given the form of the Doob-Meyer martingale $M_C^{\sharp}$, one might assert that $\Lambda_C^{\sharp}(t) = \Lambda_C(t)$ for such $t$ under independent censoring. 
However, this is not necessarily true. Define
\begin{equation*}
    \Lambda_C^{\dagger}(t) := \int_{(0, t]} \frac{\d \E \{N_C(u)\} }{\E\{Y^{\dagger}(u)\}};
\end{equation*}
similarly to $\Lambda_C^{\sharp}$, the function $\Lambda_C^{\dagger}$
is well-defined as a Lebesgue-Stieltjes integral under the $0/0=0$ convention; 
see \cref{sec:supp:basic-addntl:props}, where it is additionally
shown that $\Lambda_C^{\dagger}$ is  right-continuous. %
The result given below establishes that
$\Lambda_C^{\dagger}$ is the identification of $\Lambda_C$ under independent censoring. 
\begin{thm}
\label{prop:identif}
    If $T \ind C$, then $\Lambda_T^{\sharp}(t) = \Lambda_T(t)$ 
    for $t>0$ satisfying $\E \{Y^{\sharp}(t)\}>0$ %
    and $\Lambda_C^{\dagger}(t) = \Lambda_C(t)$ for $t>0$ satisfying $\E \{Y^{\dagger}(t)\}>0$. %
\end{thm}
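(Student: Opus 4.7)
The plan is to rewrite each of the four basic quantities $\E\{Y^{\sharp}(u)\}$, $\E\{Y^{\dagger}(u)\}$, $\d \E\{N_T(u)\}$, and $\d \E\{N_C(u)\}$ in terms of the latent marginals of $T$ and $C$ under $T \ind C$, then cancel the common factors in the ratios defining $\Lambda_T^{\sharp}$ and $\Lambda_C^{\dagger}$ to recover $\d\Lambda_T$ and $\d\Lambda_C$ respectively.

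First I would substitute the latent representations: since $X = T \wedge C$ and $\Delta = I(T \leq C)$, we have $Y^{\sharp}(u) = I(T \geq u, C \geq u)$, $Y^{\dagger}(u) = I(T > u, C \geq u)$ (as noted in the paper), $N_T(u) = I(T \leq u, T \leq C)$, and $N_C(u) = I(C \leq u, T > C)$. Under $T \ind C$ the expectations of the two at-risk processes factor directly: $\E\{Y^{\sharp}(u)\} = \P^*(T \geq u)\,\P^*(C \geq u)$ and $\E\{Y^{\dagger}(u)\} = \P^*(T > u)\,\P^*(C \geq u)$. Both are non-increasing in $u$, so positivity at the fixed time $t$ propagates to all $u \in (0, t]$ where the Lebesgue--Stieltjes integrals are evaluated.

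Next, applying Fubini to the product measure of $T$ and $C$, the two centered processes have the increments $\d_u \E\{N_T(u)\} = \P^*(C \geq u)\,\d \P^*(T \leq u)$ and $\d_u \E\{N_C(u)\} = \P^*(T > u)\,\d \P^*(C \leq u)$. Forming the ratios in the definitions of $\Lambda_T^{\sharp}$ and $\Lambda_C^{\dagger}$ and cancelling the common factor in each integrand yields $\d \Lambda_T^{\sharp}(u) = \d \Lambda_T(u)$ on $\{u : \P^*(C \geq u) > 0\}$ and $\d \Lambda_C^{\dagger}(u) = \d \Lambda_C(u)$ on $\{u : \P^*(T > u) > 0\}$; integrating over $(0,t]$ then delivers both identities under the stated positivity hypotheses.

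The main subtlety, rather than a genuine obstacle, is the careful bookkeeping of strict versus non-strict inequalities. The numerator $\d_u \E\{N_C(u)\}$ inevitably carries the strict factor $\P^*(T > u)$ because $\Delta = 0$ requires $T > C$; this matches exactly the strict factor appearing in $\E\{Y^{\dagger}(u)\}$ but not the non-strict factor $\P^*(T \geq u)$ appearing in $\E\{Y^{\sharp}(u)\}$. This mismatch is precisely why $\Lambda_C^{\sharp}$ fails to identify $\Lambda_C$ at shared atoms of $T$ and $C$, and why the theorem must be stated in terms of $\Lambda_C^{\dagger}$. No such mismatch occurs for $\Lambda_T^{\sharp}$, since the numerator $\d_u \E\{N_T(u)\}$ produces the non-strict factor $\P^*(C \geq u)$ that agrees with $\E\{Y^{\sharp}(u)\}$.
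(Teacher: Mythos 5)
Your proposal is correct and follows essentially the same route as the paper: express the numerator measures via iterated expectation (Fubini) as $\d\E\{N_C(u)\} = \P^*(T>u)\,\d\P^*(C\leq u)$ and $\d\E\{N_T(u)\} = \P^*(C\geq u)\,\d\P^*(T\leq u)$, factor the at-risk expectations under independence, and cancel the common factor, with the strict-versus-nonstrict bookkeeping around $\Delta = I(T\leq C)$ being exactly the point the paper emphasizes. The only difference is cosmetic: the paper writes out the cancellation only for $\Lambda_C^{\dagger}$ and cites the $\Lambda_T^{\sharp}$ case as well known, whereas you carry both through symmetrically and make the monotonicity argument for propagating positivity on $(0,t]$ explicit.
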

\noindent The asymmetry in identification
of the cumulative hazards for failure 
and censoring stem directly from the asymmetry in $\Delta$. This same theorem was proved
in \citet{baer2023recurrent} under a nominally weaker assumption. The equality does not necessarily hold for large $t$ since for example aggressive early censorship could preclude observation of late failures. 

Additional assumptions are needed to be
able to equate $\Lambda_C$ and $\Lambda_C^{\sharp}$. 
In view of \cref{prop:identif}, 
these assumptions follow from
the assumptions needed to be able to
equate $\Lambda_C^{\dagger}$ and $\Lambda_C^{\sharp}$. 
Considering their definitions and where all quantities are finite, the latter are directly related through the expression
\begin{equation*}
    \d \Lambda_C^{\sharp}(u)
    = \frac{\E\{Y^{\dagger}(u)\}}{\E\{Y^{\sharp}(u)\}} \d \Lambda_C^{\dagger}(u)
    = \left\{ 1- \frac{\delta \E \{ N_T(u) \}}{\E\{Y^{\sharp}(u)\}} \right\} \d \Lambda_C^{\dagger}(u) =
    \left\{ 1- \delta \Lambda^\sharp_T(u) \right\} \d \Lambda_C^{\dagger}(u).
\end{equation*}
Evidently, 
equality of $\Lambda_C^{\sharp}$
and $\Lambda_C^{\dagger}$ therefore depends on
the behavior of both
$\Lambda_C^{\dagger}$ and $\Lambda_T^{\sharp}$.
For example, the identity above shows
that $\d \Lambda_C^{\sharp}(u) = \d \Lambda_C^{\dagger}(u)$ 
when $\Lambda^\sharp_T$ is continuous at $u$. 
The next result establishes necessary and sufficient conditions
under which $\Lambda_C^{\sharp} = \Lambda_C^{\dagger}$.
\begin{prop}
\label{cor:lambda-sharp-coerc}
    For any $t>0$, $\Lambda_C^{\sharp}(t) = \Lambda_C^{\dagger}(t)$ if and only if $\delta \Lambda_T^{\sharp}(u)
    \delta \Lambda_C^{\dagger}(u) = 0$ for all $u \in (0,t]$. 

\end{prop}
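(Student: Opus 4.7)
The plan is to turn the displayed identity for $\d\Lambda_C^\sharp$ immediately preceding the statement into a single scalar identity for $\Lambda_C^\dagger(t) - \Lambda_C^\sharp(t)$, and then read off the equivalence by non-negativity.

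First, I would integrate the identity $\d \Lambda_C^{\sharp}(u) = \{1 - \delta \Lambda_T^{\sharp}(u)\}\,\d \Lambda_C^{\dagger}(u)$ over $(0,t]$ against the finite Lebesgue--Stieltjes measure induced by $\Lambda_C^{\dagger}$. This gives
\begin{equation*}
    \Lambda_C^{\dagger}(t) - \Lambda_C^{\sharp}(t) \;=\; \int_{(0,t]} \delta \Lambda_T^{\sharp}(u)\, \d \Lambda_C^{\dagger}(u).
\end{equation*}
Points $u$ at which $\E\{Y^{\sharp}(u)\} = 0$ contribute nothing under the $0/0=0$ convention, so this is unambiguous. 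The identity reduces the question to: when is the right-hand side zero?

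Next, I would decompose $\Lambda_C^{\dagger}$ into its continuous and discrete parts $\Lambda_C^{\dagger} = \Lambda_{C,\mathrm{c}}^{\dagger} + \Lambda_{C,\mathrm{d}}^{\dagger}$. The integrand $u \mapsto \delta \Lambda_T^{\sharp}(u)$ is supported on the at-most-countable set $A = \{u : \delta\Lambda_T^{\sharp}(u) > 0\}$ of atoms of $\Lambda_T^{\sharp}$. Because $\Lambda_{C,\mathrm{c}}^{\dagger}$ is atomless, $A$ has $\d\Lambda_{C,\mathrm{c}}^{\dagger}$-measure zero, so the integral against the continuous part vanishes. The integral against the discrete part is exactly the atomic sum, yielding
\begin{equation*}
    \Lambda_C^{\dagger}(t) - \Lambda_C^{\sharp}(t) \;=\; \sum_{u \in (0,t]} \delta \Lambda_T^{\sharp}(u)\, \delta \Lambda_C^{\dagger}(u),
\end{equation*}
where the sum is effectively over $u \in A \cap (0,t]$ (terms vanish elsewhere).

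Finally, since $\Lambda_T^{\sharp}$ and $\Lambda_C^{\dagger}$ are non-decreasing, every jump $\delta \Lambda_T^{\sharp}(u)$ and $\delta \Lambda_C^{\dagger}(u)$ is non-negative, hence every summand is non-negative. A sum of non-negative terms is zero if and only if every term is zero, which gives the claimed equivalence in both directions.

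I don't anticipate a major obstacle here; the only subtle point is the countable-support-versus-atomless-measure argument that discards the integral against the continuous part of $\Lambda_C^{\dagger}$, and this is a standard Lebesgue--Stieltjes decomposition. The cleanest presentation is to pass immediately from the author's displayed product identity to the atomic-sum representation, and then invoke non-negativity.
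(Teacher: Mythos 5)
Your argument is correct in substance and, on the main region, it is the same computation as the paper's: both reduce $\Lambda_C^{\dagger}(t)-\Lambda_C^{\sharp}(t)$ to the atomic sum $\sum_{0<u\le t}\delta\Lambda_T^{\sharp}(u)\,\delta\Lambda_C^{\dagger}(u)$ and conclude by nonnegativity of the summands. Where you genuinely differ is the tail. The paper treats the display $\d\Lambda_C^{\sharp}=\{1-\delta\Lambda_T^{\sharp}\}\,\d\Lambda_C^{\dagger}$ as valid only where the at-risk expectations are positive, and therefore argues separately for $t<\tau$ and $t\ge\tau$, where $\tau=\sup\{s:\E\{Y^{\dagger}(s)\}>0\}$, handling the increment at $\tau$ and the interval beyond it by hand; you instead assert the display as an identity of measures on all of $(0,t]$ under the $0/0=0$ convention, which collapses that case analysis. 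The assertion is true, but your one-line justification covers only points with $\E\{Y^{\sharp}(u)\}=0$. The case that needs a word is $\E\{Y^{\sharp}(u)\}>0$ with $\E\{Y^{\dagger}(u)\}=0$ (equivalently $\delta\Lambda_T^{\sharp}(u)=1$): there you should observe that $\delta\E\{N_C(u)\}\le\E\{Y^{\dagger}(u)\}=0$ and, by the tail argument of \cref{sec:supp:basic-addntl:props}, that the whole set $\{u:\E\{Y^{\dagger}(u)\}=0\}$ is null for $\d\E\{N_C\}$, so both $\d\Lambda_C^{\sharp}$ and $\{1-\delta\Lambda_T^{\sharp}\}\,\d\Lambda_C^{\dagger}$ assign it zero mass and their densities with respect to $\d\E\{N_C\}$ agree $\d\E\{N_C\}$-almost everywhere. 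With that sentence added your streamlined route is complete and arguably cleaner than the paper's; what the paper's explicit case analysis buys is a concrete description of the behaviour at and beyond $\tau$ that it reuses in later tail arguments (e.g.\ in the proof of \cref{lem:dag-dm}). Both your proof and the paper's implicitly subtract $\Lambda_C^{\sharp}(t)$ from $\Lambda_C^{\dagger}(t)$ as if finite, so neither is more exposed on that point.
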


An obvious corollary is, under independent censoring and for $t>0$ satisfying $\E \{Y^{\dagger}(t)\}>0$, that $\Lambda_C^{\sharp}(t)=\Lambda_C(t)$ if and only if $\delta \E\{N_T(u)\} \delta \E\{N_C(u)\}=0$ for all $u \in (0,t]$. This condition expresses that there are no-shared-discontinuities between the failure and censoring distributions and is entirely avoided by studying $\Lambda_C^{\dagger}$ rather than $\Lambda_C^{\sharp}$.

\subsection{A repaired censoring martingale}

Define the stochastic process 
\begin{align*}
    M_C^{\dagger}(t) 
    & := N_C(t) - \int_{(0,t]} Y^{\dagger}(u) \,\d \Lambda_C^{\dagger}(u)
    = N_C(t) - \int_{(0,t]} Y^{\dagger}(u) \frac{\d \E \{N_C(u)\} }{\E\{Y^{\dagger}(u)\}}. 
\end{align*}
The process $M_C^{\dagger}$ was recently studied by \citet{baer2023recurrent} in a nonparametric efficiency theory setting; it vanishes in expectation due to the appearance of $Y^{\dagger}$ in both the numerator and denominator. The following result shows that it is also a martingale.
\begin{thm}
\label{prop:dagger-martingale}
    $M_C^{\dagger}$ is an $\F_t$-martingale.
\end{thm}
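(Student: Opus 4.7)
The main obstacle is that the centering integrand $Y^{\dagger}(u)$ depends on $N_T(u)$ rather than $N_T(u-)$ and is therefore \emph{not} $\F_t$-predictable, so the Doob-Meyer route is unavailable. Instead, the plan is to prove the martingale identity by direct computation, exploiting the fact that the natural filtration $\F_s$, restricted to the $\F_s$-measurable event $\{X > s\}$, is trivial.

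First I would handle the easy items. Each $N_C(t)$ is $\F_t$-measurable, and each $Y^{\dagger}(u)$ is $\F_u$-measurable (it depends only on $N_C(u-)$ and $N_T(u)$), so the Lebesgue--Stieltjes integral against the deterministic measure $\d \Lambda_C^{\dagger}$ is $\F_t$-measurable. Integrability then follows from the Tonelli computation
\begin{equation*}
\E \int_{(0,t]} Y^{\dagger}(u)\,\d \Lambda_C^{\dagger}(u)
\;=\; \int_{(0,t]} \E[Y^{\dagger}(u)]\, \frac{\d \E[N_C(u)]}{\E[Y^{\dagger}(u)]}
\;=\; \E[N_C(t)] \;\leq\; 1,
\end{equation*}
which is consistent with the $0/0=0$ convention, since an atom $\delta\E[N_C(u)] = \P(X=u, \Delta=0) > 0$ forces $Y^{\dagger}(u) = 1$ on that atom and hence $\E[Y^{\dagger}(u)] > 0$.

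For the martingale identity, fix $s < t$ and decompose on the $\F_s$-measurable event $\{X > s\} = \{N_T(s) = N_C(s) = 0\}$. On $\{X \leq s\}$, the process $N_C$ has no jump in $(s,t]$, and $Y^{\dagger}(u) = 0$ for every $u > s$, because either $N_T(s) = 1$ or $N_C(s) = 1$ kills the defining indicator; hence $M_C^{\dagger}(t) - M_C^{\dagger}(s) = 0$ pathwise there. On $\{X > s\}$ the generators $\{N_T(u), N_C(u) : 0 < u \leq s\}$ are identically zero, so $\F_s$ restricted to this event is trivial, and it suffices to verify the unconditional identity $\E[(M_C^{\dagger}(t) - M_C^{\dagger}(s))\, I(X > s)] = 0$. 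Since both $N_C(t) - N_C(s)$ and $\int_{(s,t]} Y^{\dagger}\,\d\Lambda_C^{\dagger}$ already vanish on $\{X \leq s\}$, the indicator may be dropped, and the Tonelli identity above shows each of the two expectations equals $\E[N_C(t)] - \E[N_C(s)]$, yielding the required cancellation.

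The only delicacy I anticipate is the $0/0$ bookkeeping in the Tonelli step. The conceptual point that sidesteps the failure of predictability is that the lookahead in $Y^{\dagger}(u)$ can only matter at instants where a jump occurs, and on $\{X > s\}$ this lookahead is exactly absorbed by the triviality of $\F_s$ on that event.
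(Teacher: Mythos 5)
Your proposal is correct and follows essentially the same route as the paper's proof: adaptedness and integrability via Tonelli (the paper's bound $\E\{A_C^{\dagger}(t)\}=\E\{N_C(t)\}$ is exactly your computation), and the drift-free property by splitting on $\{X\leq s\}$, where the increment vanishes pathwise, versus $\{X>s\}$, where $\F_s$ is trivial — which is precisely what the paper does by writing the conditional expectations as $Y^{\sharp}(t+)$ times expectations conditioned on $\{Y^{\sharp}(t+)=1\}$ and then cancelling $\E\{Y^{\dagger}(u)\}$ against the denominator of $\d\Lambda_C^{\dagger}$. The only minor remark is that the $0/0$ bookkeeping needs the general fact that $\E\{Y^{\dagger}(u)\}>0$ holds $\d\E\{N_C\}$-a.e.\ (not only at atoms), which the paper establishes as part of the well-definedness of $\Lambda_C^{\dagger}$ in \cref{sec:supp:basic-addntl:props}.
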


At first glance, this theorem would appear to contradict the uniqueness noted in \cref{prop:sharp-martingale}, which may be regarded as a direct consequence of the Doob-Meyer theorem. However, it is important to keep in mind that the Doob-Meyer theorem only pertains to centering terms (i.e. compensators) with finite expectation that are increasing, right-continuous, and 
${\cal F}_t$-predictable. The following result establishes that 
there is no contradiction to the Doob-Meyer theorem
because the centering term
\begin{equation*}
    A_C^{\dagger}(t) 
    := \int_{(0,t]} Y^{\dagger}(u) \frac{\d \E \{N_C(u)\} }{\E\{Y^{\dagger}(u)\}}
\end{equation*}
used to define $M_C^{\dagger}$ is not ${\cal F}_t$-predictable, hence does not meet the key premise of the Doob-Meyer theorem. 

\begin{prop}
\label{lem:mcd-properties}
    The centering term $A_C^{\dagger}$ in the $\F_t$-martingale
    $M_C^{\dagger} = N_C - A_C^{\dagger}$ is increasing, right-continuous, satisfies $A^{\dagger}_C(0)=0$ but is not $\F_t$-predictable. 
\end{prop}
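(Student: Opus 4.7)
The plan is to verify the three affirmative properties directly from the defining Lebesgue--Stieltjes integral, and then to rule out predictability by exhibiting a point $t_0$ where the jump of $A_C^\dagger$ fails to be $\F_{t_0-}$-measurable.

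For the affirmative properties: $A_C^\dagger(0) = 0$ because the domain $(0,0]$ of integration is empty. Monotonicity follows because $Y^\dagger$ is $\{0,1\}$-valued and $\Lambda_C^\dagger$ is nondecreasing (as noted in \cref{sec:supp:basic-addntl:props}), so for $s \leq t$ we have $A_C^\dagger(t) - A_C^\dagger(s) = \int_{(s,t]} Y^\dagger(u)\,\d\Lambda_C^\dagger(u) \geq 0$. Right-continuity follows from the bound $A_C^\dagger(t_n) - A_C^\dagger(t) \leq \Lambda_C^\dagger(t_n) - \Lambda_C^\dagger(t)$ valid for any $t_n \downarrow t$, combined with right-continuity of $\Lambda_C^\dagger$ itself (also established in \cref{sec:supp:basic-addntl:props}).

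The key step is non-predictability. At any atom $t$ of $\Lambda_C^\dagger$, the process $A_C^\dagger$ has a jump of size $\delta A_C^\dagger(t) = Y^\dagger(t)\,\delta \Lambda_C^\dagger(t)$. Using the identity $Y^\dagger(t) = Y^\sharp(t) - \delta N_T(t)$ recorded earlier in the excerpt, this jump depends on $\delta N_T(t)$, which is not $\F_{t-}$-measurable when the failure distribution also has an atom at $t$. I would then invoke the standard characterization that any $\F_t$-predictable, right-continuous process $A$ must have $\delta A(t)$ be $\F_{t-}$-measurable at every deterministic time $t$ (since deterministic times are predictable stopping times). To conclude non-predictability, it suffices to exhibit one law $\P$ and one $t_0 > 0$ at which this measurability fails; the simplest is $\P\{X = t_0, \Delta = 1\} > 0$ and $\P\{X = t_0, \Delta = 0\} > 0$. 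Then $\delta \Lambda_C^\dagger(t_0) > 0$ while $\delta N_T(t_0) = I(X = t_0, \Delta = 1)$ takes both values $0$ and $1$ with positive probability on $\{Y^\sharp(t_0) = 1\}$, making $\delta A_C^\dagger(t_0)$ not $\F_{t_0-}$-measurable.

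The main obstacle is the careful handling of the predictability characterization---in particular, identifying $\F_{t-} = \sigma\{N_T(u), N_C(u) : 0 < u < t\}$ and confirming that $\delta N_T(t_0)$ genuinely fails to be measurable with respect to this $\sigma$-algebra under the chosen law. The other three claims reduce to straightforward bookkeeping on the defining integral.
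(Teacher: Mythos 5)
Your proposal is correct, and at its core it blames the same culprit as the paper --- the term $\delta N_T(t)\,\delta\Lambda_C^{\dagger}(t)$ --- but the route to non-predictability is packaged differently. The paper writes $A_C^{\dagger}(t)=\int_{(0,t]}Y^{\sharp}(u)\,\d\Lambda_C^{\dagger}(u)-I(\Delta=1,X<t)\,\delta\Lambda_C^{\dagger}(X)-\delta N_T(t)\,\delta\Lambda_C^{\dagger}(t)$, observes that the first two summands are $\F_t$-predictable (standard integral of an at-risk process; adapted and left-continuous), and then asserts that the remaining summand is not predictable because $\delta N_T(t)$ involves $N_T(t)\notin\F_{t-}$; right-continuity is obtained from an explicit closed form for $A_C^{\dagger}$ \eqref{eq:centterm-rc} rather than your domination bound $A_C^{\dagger}(t_n)-A_C^{\dagger}(t)\le\Lambda_C^{\dagger}(t_n)-\Lambda_C^{\dagger}(t)$ (both work, using right-continuity of $\Lambda_C^{\dagger}$ from \cref{sec:supp:basic-addntl:props}). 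You instead work directly with the jump $\delta A_C^{\dagger}(t)=Y^{\dagger}(t)\,\delta\Lambda_C^{\dagger}(t)$, invoke the fact that a predictable process has $\F_{t-}$-measurable time-$t$ sections (deterministic times being predictable stopping times), and then exhibit a law with $\P(X=t_0,\Delta=1)>0$ and $\P(X=t_0,\Delta=0)>0$; since $\F_{t_0-}$ is trivial on $\{X\ge t_0\}$ while $\delta A_C^{\dagger}(t_0)$ takes two values there with positive probability, measurability fails. What your version buys is precision about \emph{when} the conclusion holds: it makes explicit that a shared atom is genuinely needed --- if $\Lambda_C^{\dagger}$ is continuous, or $N_T$ and $\Lambda_C^{\dagger}$ share no discontinuity, the offending term vanishes almost surely and $A_C^{\dagger}$ coincides with a predictable process (consistent with \cref{cor:marting-sharp-coerc}), so the blanket assertion in the proposition should be read as ``not predictable in general'' rather than for every law $\P$. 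The paper's decomposition argument buys a statement that does not require constructing a particular law, at the cost of leaving this caveat implicit in the phrase ``clearly not $\F_t$-predictable.'' Either way, your three affirmative properties are handled correctly (the paper treats the increasing property and $A_C^{\dagger}(0)=0$ as immediate, as you do).
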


Although the result makes clear that $A_C^{\dagger}$ is a non-standard centering term due to its non-predictability, it does not clarify the relationship between $A_C^{\dagger}$ and the $\F_t$-predictable centering term of the Doob-Meyer martingale $M_C^{\sharp}$. When $X$ is discrete we may readily see for $t \geq 0$ that the centering term of $M_C^{\sharp}$ satisfies that 
\begin{equation}
\label{eq:doob-sharp}
    A_C^{\sharp}(t)
    := \int_{(0,t]} Y^{\sharp}(u) \,\frac{\d \E \{N_C(u)\}}{\E\{Y^{\sharp}(u)\}} 
    = \sum_{0 < u \leq t} \E \{ \delta N_C(u) \mid \F_{u-} \}
\end{equation}
is a sum of expectations conditional on $\F_{u-}$. Indeed, this is an example of the Doob decomposition \citep[Theorem 5.2.10]{durrett2010probability} for discrete time martingales, and it is routinely used to motivate properties of Doob-Meyer martingales in continuous time. 

Define the filtration
\begin{equation*}
    \G_t
    := \sigma \{N_T(u) \,:\, 0 < u \leq t; \hspace{1mm} 
    N_C(u) \,:\, 0 < u < t\}.
\end{equation*}
Notice that $\F_{t-} \subseteq \G_t \subseteq \F_t$ for all $t>0$: whereas $\F_{t-}$ contains all information \emph{before} time $t$, the sigma algebra $\G_t$ additionally contains \emph{present} information about $N_T$ at time $t$. 
When $X$ is discrete we may readily see for $t \geq 0$ that
\begin{equation}
\label{eq:doob-dagger}
    A_C^{\dagger}(t)
    = \sum_{0 < u \leq t} \E \{ \delta N_C(u) \mid \G_{u} \}
\end{equation}
is a sum of expectations conditional on $\G_{u}$ rather than on $\F_{u-}$. 
Importantly, the martingale ``drift-free'' property holds for both $M_C^{\sharp}$ and $M_C^{\dagger}$ by removing information in the past.

\subsection{Comparison}

The identification result in \cref{prop:identif}, combined
with \cref{prop:dagger-martingale}, shows that $M_C^{\dagger}$ is 
in general the relevant martingale to analyze when the latent cumulative hazard $\Lambda_C$ for censoring is of interest. 
We now state a result that directly 
compares $M_C^{\dagger}$ to the Doob-Meyer martingale $M_C^{\sharp};$
this may be considered as a modest extension of \cref{cor:lambda-sharp-coerc}. Recall that two stochastic processes are indistinguishable if they are almost surely equal uniformly over time. 
\begin{cor}
\label{cor:marting-sharp-coerc}    
    $M_C^{\sharp}$ and $M_C^{\dagger}$ are indistinguishable if and only if $\delta \Lambda_T^{\sharp}(u)
    \delta \Lambda_C^{\dagger}(u) = 0$ for all $u>0$. %
    \end{cor}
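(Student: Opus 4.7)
Because $M_C^{\sharp} = N_C - A_C^{\sharp}$ and $M_C^{\dagger} = N_C - A_C^{\dagger}$, indistinguishability of the two martingales is equivalent to indistinguishability of the two centering terms, and the plan is to compute their difference in closed form. Using the identity $Y^{\dagger}(u) = Y^{\sharp}(u) - \delta N_T(u)$ together with the already-displayed relation $\d \Lambda_C^{\sharp}(u) = \{1 - \delta \Lambda_T^{\sharp}(u)\} \d \Lambda_C^{\dagger}(u)$, a direct substitution causes the $Y^{\sharp}$ contributions to cancel and produces
\begin{equation*}
A_C^{\sharp}(t) - A_C^{\dagger}(t) = \int_{(0,t]} \{\delta N_T(u) - Y^{\sharp}(u) \delta \Lambda_T^{\sharp}(u)\} \, \d \Lambda_C^{\dagger}(u) = \int_{(0,t]} \delta M_T^{\sharp}(u) \, \d \Lambda_C^{\dagger}(u).
\end{equation*}
Since the integrand $\delta M_T^{\sharp}(u)$ vanishes outside a countable (random) set while the continuous component of $\Lambda_C^{\dagger}$ charges no countable set, this collapses to the pointwise sum $\sum_{u \leq t,\, \delta \Lambda_C^{\dagger}(u) > 0} \delta M_T^{\sharp}(u)\delta \Lambda_C^{\dagger}(u)$.

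For sufficiency, assume $\delta \Lambda_T^{\sharp}(u)\delta \Lambda_C^{\dagger}(u) = 0$ for every $u > 0$. At each atom $u$ of $\Lambda_C^{\dagger}$ one then has $\delta \Lambda_T^{\sharp}(u) = 0$, so $\delta M_T^{\sharp}(u) = \delta N_T(u) = I(X = u, \Delta = 1)$; and $\delta \Lambda_T^{\sharp}(u) = 0$ in turn forces $\P(X = u, \Delta = 1) = 0$ (via the $0/0=0$ convention, treating separately the case $\E\{Y^{\sharp}(u)\}=0$). The countable union of the events $\{X = u, \Delta = 1\}$ over the atoms of $\Lambda_C^{\dagger}$ is therefore a single $\P$-null set, off which every summand vanishes for all $t$ simultaneously, yielding indistinguishability.

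For necessity I argue by contrapositive: pick $u_0$ with $\delta \Lambda_T^{\sharp}(u_0)\delta \Lambda_C^{\dagger}(u_0) > 0$. The display above gives the jump of $M_C^{\sharp} - M_C^{\dagger}$ at $u_0$ as $-\delta M_T^{\sharp}(u_0)\delta \Lambda_C^{\dagger}(u_0)$. On $\{X = u_0, \Delta = 0\}$, which has positive probability because $u_0$ is an atom of $\Lambda_C^{\dagger}$, one has $Y^{\sharp}(u_0) = 1$ and $\delta N_T(u_0) = 0$, so this jump equals $\delta \Lambda_T^{\sharp}(u_0)\delta \Lambda_C^{\dagger}(u_0) > 0$. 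Hence $M_C^{\sharp}$ and $M_C^{\dagger}$ disagree at time $u_0$ on an event of positive probability, contradicting indistinguishability.

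The main obstacle is cleanly reducing the Lebesgue--Stieltjes integral of the random integrand $\delta M_T^{\sharp}$ against $\d \Lambda_C^{\dagger}$ to a single sum over atoms of $\Lambda_C^{\dagger}$, and then assembling a single null set that handles the ``for all $t$'' quantifier in indistinguishability. Once this reduction is in place, atoms of $\Lambda_C^{\dagger}$ lying outside the atoms of $\Lambda_T^{\sharp}$ are harmless since $N_T$ a.s.\ does not charge them, while atoms in the overlap account exactly for the failure of indistinguishability when the hypothesis is violated.
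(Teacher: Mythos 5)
Your argument is correct, and it reaches the conclusion by a route that differs from the paper's in a worthwhile way. Your closed form $M_C^{\dagger}-M_C^{\sharp}=A_C^{\sharp}-A_C^{\dagger}=\int_{(0,\cdot]}\delta M_T^{\sharp}(u)\,\d\Lambda_C^{\dagger}(u)$ is exactly the identity the paper isolates as \cref{lem:dag-dm} (there written as $\int\delta\Lambda_C^{\dagger}\,\d M_T^{\sharp}$, which agrees with your form since both reduce to the same jump sum); but where the paper's main proof splits the difference into $\int Y^{\sharp}\,\d\{\Lambda_C^{\sharp}-\Lambda_C^{\dagger}\}+\int\delta\Lambda_C^{\dagger}\,\d N_T$ and handles the two pieces via \cref{cor:lambda-sharp-coerc}, the nonnegativity-plus-expectation device of \cref{lem:Znon}, and the right-continuity reduction of \cref{lem:mod.eq.indist} (the alternative proof instead invokes the predictable variation criterion from semimartingale theory), you finish pathwise and elementarily: sufficiency by assembling one null set $\bigcup_u\{X=u,\Delta=1\}$ over the countably many atoms of $\Lambda_C^{\dagger}$, which handles the ``for all $t$'' quantifier without any appeal to right-continuity or to $L^1$ arguments, and necessity by exhibiting the explicit positive-probability event $\{X=u_0,\Delta=0\}$ on which the jump (indeed the value at $u_0$) of the difference is $\delta\Lambda_T^{\sharp}(u_0)\delta\Lambda_C^{\dagger}(u_0)>0$. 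This buys a shorter, more constructive proof whose only probabilistic input is that an atom of $\Lambda_C^{\dagger}$ forces $\P(X=u_0,\Delta=0)>0$ and that a vanishing atom of $\Lambda_T^{\sharp}$ forces $\P(X=u,\Delta=1)=0$, both of which you justify correctly. The one point to tighten is your use of the relation $\d\Lambda_C^{\sharp}(u)=\{1-\delta\Lambda_T^{\sharp}(u)\}\,\d\Lambda_C^{\dagger}(u)$, which the paper states only ``where all quantities are finite'': for the identity of measures on all of $(0,\infty)$ you should add the (easy) observation that on the set where $\E\{Y^{\dagger}(u)\}=0$ the measure $\d\E\{N_C(u)\}$ vanishes, so both hazard measures are zero there and the relation holds trivially; this is precisely the tail analysis the paper carries out separately in its proofs of \cref{cor:lambda-sharp-coerc} and \cref{lem:dag-dm}.
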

\noindent Under the more stringent no-shared-discontinuities condition in \cref{cor:marting-sharp-coerc}, the Doob-Meyer martingale $M_C^{\sharp}$ can be studied
in place of $M_C^{\dagger}$. Earlier work has often implicitly relied on this no-shared-discontinuities condition by working with $M_C^{\sharp}$ \citep[see e.g.][]{van2003unified,tsiatis2006semiparametric}.

\section{Martingale Transforms}
\label{sec:martingale-trans}

We now turn our attention to developing a theory for transforms of the martingale $M_C^{\dagger}$. Throughout we exploit that the centering term $A_C^{\dagger}$ of the $\F_t$-martingale $M_C^{\dagger}$ is $\G_t$-adapted. We start by generalizing the notion of $\F_t$-predictability. 
Define the set of half-predictable rectangles 
\begin{equation}
\label{eq:half-pred-rect}
    \{0\} \times A, A \in \G_0 \hspace{2mm} \text{ and } \hspace{2mm} [a,b) \times A, 0 < a < b \leq \infty, A \in \G_a.
\end{equation}
Define $\H$ as the sigma algebra generated by the half-predictable rectangles, and define a process $H$ as half-predictable if, as a mapping from $[0, \infty) \times \Omega$ to $\R$, it is $\H$-measurable. 

\begin{lem}
\label{lem:half-pred}
    If $H$ is a half-predictable process, then $H$ is $\F_t$-predictable and, for any $t>0$, $H(t)$ is $\G_t$-measurable. 
\end{lem}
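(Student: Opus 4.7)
My plan is a monotone-class argument: reduce the lemma's two conclusions to indicators of generating half-predictable rectangles, then verify each conclusion on those generators. Let $\mathcal{L}$ be the collection of bounded processes $H$ satisfying both target conclusions, $\F_t$-predictability and pointwise $\G_t$-measurability of $H(t)$ at each $t > 0$. Both properties are preserved under finite linear combinations and bounded pointwise monotone limits, so $\mathcal{L}$ is a $\lambda$-system containing the constants. The half-predictable rectangles themselves form a $\pi$-system: intersecting $[a, b) \times A$ with $[a', b') \times A'$ (where $a \le a'$) yields $[a', b \wedge b') \times (A \cap A')$, and the filtration property $\G_a \subseteq \G_{a'}$ places $A \cap A'$ in $\G_{a'}$; the $\{0\} \times A$ generators behave analogously. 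The functional monotone-class theorem then reduces the lemma to a single generating rectangle.

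For the generator $H = I_{[a, b) \times A}$ with $A \in \G_a$, the pointwise $\G_t$-measurability is immediate: either $t \notin [a, b)$ and $H(t) = 0$, or $t \in [a, b)$ and $H(t) = I_A$ with $A \in \G_a \subseteq \G_t$. The $\{0\} \times A$ generator with $A \in \G_0$ is trivial since $\G_0$ is the trivial $\sigma$-algebra.

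For the $\F_t$-predictability, the key observation is $\G_a \subseteq \F_a$, which puts $A \in \F_a$. I would decompose $[a, b) \times A = \{a\} \times A \cup (a, b) \times A$ and treat the pieces separately. The open-in-time piece $(a, b) \times A$ is the countable union $\bigcup_n (a, b - 1/n] \times A$ of standard predictable rectangles with $A \in \F_a$. The singleton-in-time piece $\{a\} \times A$ is handled by viewing $a$ as a deterministic (hence predictable) stopping time and exploiting the single-jump structure of the filtration generated by $N_T$ and $N_C$.

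The main obstacle will be the careful treatment of $\{a\} \times A$ when $A \in \G_a$ is not $\F_{a-}$-measurable, which is the case the half-predictable framework is designed to accommodate. Once this step is handled by leveraging the specific single-jump filtration structure, the monotone-class argument cleanly packages the extension to all $\H$-measurable $H$.
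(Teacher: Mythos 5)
There is a real gap, and it sits exactly where you flagged it: the piece $\{a\} \times A$ with $A \in \G_a \setminus \F_{a-}$ cannot be handled, because the implication you are trying to establish fails in that direction. If the indicator of $\{a\} \times A$ (or of $[a,b) \times A$) were $\F_t$-predictable, then its value at the deterministic --- hence predictable --- time $a$, namely $I_A$, would have to be $\F_{a-}$-measurable. Taking $A = \{\delta N_T(a) = 1\} \in \G_a$ shows this is false, by precisely the argument used in the proof of \cref{lem:mcd-properties}: $\delta N_T(a)$ depends on $N_T(a)$, which is not $\F_{a-}$-measurable. So a half-predictable rectangle is in general \emph{not} a predictable set, $\H \not\subseteq \mathcal{P}$, and no appeal to the single-jump filtration structure can close this step; the whole point of the half-predictable construction is to admit exactly such processes. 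Your monotone-class scaffolding (the $\pi$-system check and closure under bounded monotone limits) is fine, but it is packaging a generator-level claim that is false for the predictability half.

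What the paper's proof actually establishes --- and what the sentence following the lemma (``half-predictability generalizes $\F_t$-predictability'') indicates is the intended reading of the first claim --- is the reverse inclusion $\mathcal{P} \subseteq \H$: since $\F_{a-} \subseteq \G_a$, the rectangles $[a,b) \times A$ with $A \in \F_{a-}$ (together with $\{0\} \times A$, $A \in \F_0$) that generate the predictable sigma algebra are themselves half-predictable rectangles, so every $\F_t$-predictable process is half-predictable. That is a one-line argument requiring no extension step, and it is the inclusion the rest of the paper uses (predictable integrands form a subclass of the admissible half-predictable ones). Your treatment of the second claim is sound and essentially matches the paper's: on a generator $[a,b) \times A$ one has $H(t) \in \{0, I_A\}$ with $A \in \G_a \subseteq \G_t$, and $\G_t$-measurability passes through pointwise limits; the paper phrases this via sections $B_t = \{\omega : (t,\omega) \in B\}$ of sets $B \in \H$ and simple-function approximation rather than a functional monotone class theorem, but the content is the same.
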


The lemma show that half-predictability generalizes $\F_t$-predictability and implies $\G_t$-adaptedness.

\subsection{Martingale transforms}

Define the martingale transform
\begin{equation*}
    (H \boldsymbol{\cdot} M^{\dagger}_{C}) (t) 
    := \int_{(0,t]} H(u) \, \d M_C^{\dagger}(u),
\end{equation*}
for a stochastic process $H$. \citet[Theorems 1.5.1 and 2.4.1]{fleming1991counting} show that $H \boldsymbol{\cdot} M^{\sharp}_{C}$ is a martingale whenever $H$ is $\F_t$-predictable and bounded, driven by the predictability of the centering term (or compensator) of $M^{\sharp}_C$.  

For the martingale of interest $M_C^{\dagger}$, the centering term is not $\F_t$-predictable but is $\G_t$-adapted. The following result is suggested by the corresponding heuristic motivation that for all $u>0$ 
\begin{equation*}
\label{eq:mt:no-inc}
    \E \{ \delta M_C^{\dagger}(u) \mid \G_u \}
    = 0, 
\end{equation*}
which is established in detail in \cref{sec:supp:mt-addntl:proofs}. 

\begin{thm}
\label{thm:martingale-transform}
 If $H$ is half-predictable and bounded, then $H \boldsymbol{\cdot} M^{\dagger}_{C}$ is a $\F_t$-martingale. 
\end{thm}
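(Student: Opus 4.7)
The plan is to follow a standard monotone-class strategy: verify the martingale identity for indicators of half-predictable rectangles, extend by linearity to simple processes, and then promote to all bounded half-predictable $H$. The crucial ingredient already in hand is the identity $\E\{\delta M_C^{\dagger}(u) \mid \G_u\} = 0$ noted immediately before the theorem.

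First I would restrict attention to generating indicators $H(u,\omega) = I_A(\omega)\,I_{[a,b)}(u)$ with $A \in \G_a$; rectangles of the form $\{0\}\times A$ contribute nothing since the integral excludes $u=0$. For $0 \leq s < t$, the increment $(H \boldsymbol{\cdot} M_C^{\dagger})(t) - (H \boldsymbol{\cdot} M_C^{\dagger})(s)$ equals $I_A$ times a Stieltjes increment of $M_C^{\dagger}$ over $(s,t] \cap [a,b)$. When $s \geq a$ the indicator $I_A$ is $\F_s$-measurable (since $\G_a \subseteq \F_a \subseteq \F_s$) and the $\F_t$-martingale property of $M_C^{\dagger}$ directly gives conditional expectation zero, with any left limit at $b$ handled in the standard way. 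When $s < a$ the increment has the form $I_A\,[M_C^{\dagger}(\tau) - M_C^{\dagger}(a-)]$ for some $\tau \geq a$, and the subtle point is that $I_A$ is $\F_a$-measurable but not in general $\F_{a-}$-measurable. I would decompose $M_C^{\dagger}(a-) = M_C^{\dagger}(a) - \delta M_C^{\dagger}(a)$ and tower-condition through $\G_a$: since $I_A$ is $\G_a$-measurable,
\[
\E\{I_A\,\delta M_C^{\dagger}(a) \mid \F_s\} \;=\; \E\bigl\{I_A\,\E[\delta M_C^{\dagger}(a)\mid\G_a] \;\big|\; \F_s\bigr\} \;=\; 0,
\]
while $\E\{I_A\,[M_C^{\dagger}(\tau) - M_C^{\dagger}(a)] \mid \F_s\}$ vanishes upon conditioning further through $\F_a$ and using the $\F_t$-martingale property of $M_C^{\dagger}$.

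Integrability and $\F_t$-adaptedness of $(H \boldsymbol{\cdot} M_C^{\dagger})(t)$ pose no issue: boundedness of $H$ combined with $\E\{N_C(t)\} + \E\{A_C^{\dagger}(t)\} \leq 2$ yields an $L^1$ bound on the total variation of $M_C^{\dagger}$, and \cref{lem:half-pred} delivers $\F_t$-adaptedness of the Stieltjes integral. Linearity then extends the martingale identity to finite linear combinations of rectangle indicators, and a functional monotone class argument lifts it to all bounded $\H$-measurable $H$, using bounded convergence against the finite-variation integrator $N_C + A_C^{\dagger}$ to pass to the limit inside both the Stieltjes integral and the conditional expectation.

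The main obstacle is the left-endpoint behavior at $u = a$ in the second case above: the failure of $I_A$ to be $\F_{a-}$-measurable rules out the classical predictable-transform calculation, and only the identity $\E\{\delta M_C^{\dagger}(a) \mid \G_a\} = 0$ saves it. In this sense half-predictability is exactly the weakening of $\F_t$-predictability tailored to the $\G_t$-adapted but non-$\F_t$-predictable centering term $A_C^{\dagger}$.
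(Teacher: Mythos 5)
Your proposal is correct and follows essentially the same route as the paper: establish the martingale property for indicators of half-predictable rectangles, using the identity $\E\{\delta M_C^{\dagger}(a) \mid \G_a\}=0$ to absorb the jump at the closed left endpoint (the paper does this by shifting to the interval $(a\vee t,\, b\wedge(t+s)]$ and conditioning on $\F_{a\vee t}$, which is the same device as your split $M_C^{\dagger}(a-)=M_C^{\dagger}(a)-\delta M_C^{\dagger}(a)$), and then extend by linearity and a functional monotone class argument with the $L^1$ bound $\E\{N_C(t)+A_C^{\dagger}(t)\}\leq 2$ justifying the passage to the limit. No gaps of substance.
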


Since the constant function $u \mapsto 1$ is $\G_t$-adapted and bounded, \cref{thm:martingale-transform} shows that the stochastic process $M^{\dagger}_{C} = 1 \boldsymbol{\cdot} M^{\dagger}_{C}$ is a $\F_t$-martingale. Thus \cref{thm:martingale-transform} generalizes \cref{prop:dagger-martingale} to create a large class of martingales based on $M_C^{\dagger}$. 
A simplified statement and proof of \cref{thm:martingale-transform} is provided in \cref{sec:supp:mt-addntl:proofs-discrete} that assumes discrete time.

\subsection{Variation and covariation processes}

Second order moments of martingale transforms may be conveniently calculated based on a theory of covariation processes. 
\citet[Theorems 2.4.3 and 2.6.1]{fleming1991counting} study predicable covariation processes based on $M_T^{\sharp}$; 
in this section we consider covariation processes based on $M_C^{\dagger}$ that have half-predictable centering terms.

Define the multiplicative survival functions 
\begin{equation*}
    F^{\sharp}(t) := \Prodi_{(0,t]} \biggl\{ 1 - \d \Lambda_T^{\sharp}(u) \biggr\}, \hspace{5mm}
    G^{\dagger}(t) := \Prodi_{(0,t]} \biggl\{ 1 - \d \Lambda_C^{\dagger}(u) \biggr\},
\end{equation*}
where $\prodi$ is the product integral \citep{gill1990survey}. 

\begin{thm}
\label{thm:covar-proc}
     If $H_1, H_2$ are bounded and half-predictable processes, then the process 
    \begin{equation*} 
        (H_1 \cdot M_C^{\dagger})(H_2 \cdot M_C^{\dagger}) - \int_{(0,\cdot]} H_1(u) H_2(u) \frac{G^{\dagger}(u)}{G^{\dagger}(u-)} Y^{\dagger}(u) \,\d \Lambda_C^{\dagger}(u) 
    \end{equation*} 
    is an $\F_t$-martingale. 
    If $H_1$ is a bounded and half-predictable process and $H_3$ is a bounded and $\F_t$-predictable process, then the process 
    \begin{equation*} 
        (H_1 \cdot M_C^{\dagger})(H_3 \cdot M_T^{\sharp}) 
    \end{equation*} 
    is an $\F_t$-martingale. 
\end{thm}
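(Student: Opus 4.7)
My plan is to attack both parts via the Stieltjes integration by parts formula
\[
U(t)V(t) = \int_{(0,t]} U(u-) \,\d V(u) + \int_{(0,t]} V(u-) \,\d U(u) + \sum_{0 < u \leq t} \delta U(u) \,\delta V(u),
\]
applied with $U = H_1 \cdot M_C^{\dagger}$ and either $V = H_2 \cdot M_C^{\dagger}$ or $V = H_3 \cdot M_T^{\sharp}$. The aim is to identify the two integral summands as martingale transforms covered by \cref{thm:martingale-transform} (or by the classical Doob--Meyer transform result for $M_T^{\sharp}$) and to handle the jump sum by algebraic simplification.

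A preliminary step is to verify that the $\F_t$-predictable $\sigma$-algebra is contained in $\H$. This reduces to the observation $\F_a \subseteq \G_{a+\epsilon}$ for every $\epsilon > 0$, which expresses each generating predictable rectangle $(a,b] \times A$ with $A \in \F_a$ as a countable Boolean combination of half-predictable rectangles $[a + 1/n, b + 1/m) \times A$. Consequently, every left-continuous $\F_t$-adapted process is half-predictable; in particular the left limits $(H_1 \cdot M_C^{\dagger})(u-)$, $(H_2 \cdot M_C^{\dagger})(u-)$, and $(H_3 \cdot M_T^{\sharp})(u-)$ are all half-predictable, and the predictable process $H_3$ is itself half-predictable. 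Since products of half-predictable processes remain half-predictable and the relevant integrands are locally bounded, \cref{thm:martingale-transform} applies after a routine localization to handle the left-limit factors.

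For the first part, the two integral summands from integration by parts are $M_C^{\dagger}$-transforms with half-predictable, locally bounded integrands, hence $\F_t$-martingales. For the jump sum I expand $(\delta M_C^{\dagger}(u))^2 = (\delta N_C(u) - Y^{\dagger}(u) \delta \Lambda_C^{\dagger}(u))^2$ using $(\delta N_C)^2 = \delta N_C$, $(Y^{\dagger})^2 = Y^{\dagger}$, and the cross-identity $\delta N_C(u) Y^{\dagger}(u) = \delta N_C(u)$ (since $\delta N_C(u) = 1$ forces $C = u < T$, so $Y^{\dagger}(u) = 1$). Subtracting the stated centering process and using $G^{\dagger}(u)/G^{\dagger}(u-) = 1 - \delta \Lambda_C^{\dagger}(u)$, the difference should collapse algebraically to $\int_{(0,\cdot]} H_1 H_2 (1 - 2 \delta \Lambda_C^{\dagger}) \,\d M_C^{\dagger}$, which is a martingale by \cref{thm:martingale-transform} since $|1 - 2\delta \Lambda_C^{\dagger}| \leq 1$.

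For the second part, $\int (H_1 \cdot M_C^{\dagger})(u-) H_3(u) \,\d M_T^{\sharp}(u)$ is a classical predictable martingale transform of $M_T^{\sharp}$, and $\int (H_3 \cdot M_T^{\sharp})(u-) H_1(u) \,\d M_C^{\dagger}(u)$ is an $M_C^{\dagger}$-transform with half-predictable, locally bounded integrand, so both are $\F_t$-martingales. For the jump sum, expanding the product $\delta M_C^{\dagger}(u) \delta M_T^{\sharp}(u)$ and using the four identities $\delta N_C \delta N_T = 0$, $\delta N_T Y^{\dagger} = 0$ (since $T = u$ precludes $T > u$), $\delta N_C Y^{\sharp} = \delta N_C$, and $Y^{\dagger} Y^{\sharp} = Y^{\dagger}$ yields $\delta M_C^{\dagger}(u) \delta M_T^{\sharp}(u) = -\delta \Lambda_T^{\sharp}(u) \delta M_C^{\dagger}(u)$; the jump sum is therefore $-\int H_1 H_3 \delta \Lambda_T^{\sharp} \,\d M_C^{\dagger}$, another half-predictable bounded $M_C^{\dagger}$-transform and hence a martingale. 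The main obstacle is the preliminary half-predictability containment; once that is in place, the rest of the argument is essentially bookkeeping.
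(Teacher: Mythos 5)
Your route (pathwise integration by parts, identifying the two stochastic-integral terms as transforms and collapsing the jump sum algebraically) is genuinely different from the paper's proof, which instead establishes the variation result first for $(M_C^{\dagger})^2$ via the Fleming--Harrington algebraic identity, upgrades it from a local to a true martingale through an explicit moment bound, and then extends to general bounded half-predictable $H_1,H_2$ by a three-stage monotone class argument. Your jump-sum algebra checks out: the identities $\delta N_C\,Y^{\dagger}=\delta N_C$, $\delta N_T\,Y^{\dagger}=0$, etc., do give $\{\delta M_C^{\dagger}\}^2$ minus the centering increment equal to the increment of $\int H_1H_2\{1-2\delta\Lambda_C^{\dagger}\}\,\d M_C^{\dagger}$, and $\delta M_C^{\dagger}\,\delta M_T^{\sharp}=-\delta\Lambda_T^{\sharp}\,\delta M_C^{\dagger}$, exactly as in the paper's discrete-time computation; and your preliminary containment $\mathcal{P}\subseteq\H$ is already available (it is the content of the proof of \cref{lem:half-pred}), so you need not re-derive it.

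The genuine gap is the treatment of the left-limit integrands. The processes $(H_1\cdot M_C^{\dagger})(u-)$, $(H_2\cdot M_C^{\dagger})(u-)$ and $(H_3\cdot M_T^{\sharp})(u-)$ are half-predictable but \emph{not} bounded, since $|M_C^{\dagger}(u-)|$ is only controlled by $1+\Lambda_C^{\dagger}\{(X\wedge u)-\}$ and $\Lambda_C^{\dagger}$ (likewise $\Lambda_T^{\sharp}$) may blow up near $\tau$. \cref{thm:martingale-transform} is stated only for bounded half-predictable integrands, and no localization theory for half-predictable transforms is developed in the paper (the discussion explicitly lists it as open), so ``routine localization'' is not available; even granting it, stopping would only deliver a \emph{local} $\F_t$-martingale, whereas the theorem asserts a true $\F_t$-martingale. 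Closing this gap is precisely where the paper spends its effort: \cref{lem:exp} (stochastic domination of $\Lambda(X)$ by a unit exponential) yields $\E[\{\Lambda_C^{\dagger}(X)\}^3]\leq 6$, which via \cref{lem:ll-mart2} and \cref{prop:dag-square-mart} gives the integrability needed to conclude genuine martingales before the monotone class step. Your argument needs an analogous uniform-integrability or moment bound for each of the unbounded transform terms (including the cross term driven by $M_T^{\sharp}$); without it, your proof establishes only the local-martingale version of the statement, essentially \cref{prop:pred-covar} rather than \cref{thm:covar-proc}.
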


The theorem shows that $M^{\sharp}_{T}$ and $M^{\dagger}_{C}$ are orthogonal despite that $M^{\sharp}_{T}$ and $M^{\sharp}_{C}$ are generally only orthogonal when $\E\{N_T(\cdot)\}$ and $\E\{N_C(\cdot)\}$ share no discontinuities (see e.g. Theorem 2.6.1 in \citet{fleming1991counting}). A simplified statement and proof of \cref{thm:covar-proc} is provided in \cref{sec:supp:mt-addntl:var-and-covar-discrete} that assumes discrete time. 

    The covariation process result could be anticipated by considering $H_1(u) = H_3(u) = I(u=a)$, for some $a \in (0, t]$. In this case the product of the two martingale transforms
    \begin{align*}
        (H_1 \boldsymbol{\cdot} M^{\dagger}_{C}) (t) \, (H_3 \boldsymbol{\cdot} M^{\sharp}_{T}) (t) 
        & = \left\{ \delta N_C(a) - Y^{\dagger}(a) \delta \Lambda_C^{\dagger}(a) \right\} \left\{ \delta N_T(a) - Y^{\sharp}(a) \delta \Lambda_T^{\sharp}(a) \right\} \\
        & \hspace{-15mm} = \left\{ \delta N_C(a) - Y^{\dagger}(a) \delta \Lambda_C^{\dagger}(a) \right\} \left\{ - \delta \Lambda_T^{\sharp}(a) \right\}
    \end{align*}
    is a martingale by \cref{thm:martingale-transform} as it is proportional to $(H_1 \boldsymbol{\cdot} M^{\dagger}_{C}) (t)$. In this simple case, a half-predictable centering term is identically zero.

\section{Alternative Development: Doob-Meyer Martingales and Predictability}
\label{sec:pred}

The development so far has been based entirely on $\G_t$-adaptedness due to the special structure of the centering term $A_C^{\dagger}$ of $M_C^{\dagger}$. In this section, we exploit standard stochastic process theory from the Strasbourg school \citep[p. 116-117]{andersen1993statistical} to develop alternative versions of some of the previous results which make more stringent assumptions. 
The starting point is the following algebraic identity, which expresses $M_C^{\dagger}$ through the Doob-Meyer martingales $M_T^{\sharp}$ and $M_C^{\sharp}$. 
\begin{lem}
\label{lem:dag-dm}
    For all $t>0$, $M_C^{\dagger}(t) = M^\sharp_C(t) + \int_{(0,t]} \delta \Lambda^\dag_C(u) \,\d M_T^{\sharp}(u)$. 
\end{lem}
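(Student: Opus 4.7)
\textbf{Proof plan for \cref{lem:dag-dm}.} My plan is to compute the difference $M_C^{\dagger}(t) - M_C^{\sharp}(t)$ directly from the definitions, rewrite it using two identities already established in the excerpt, and then reinterpret the resulting Stieltjes integral as an integral against $\d M_T^{\sharp}$. The ingredients I will use are: the relation
\[
Y^{\dagger}(u) \,=\, Y^{\sharp}(u) - \delta N_T(u),
\]
stated in \cref{sec:set-up}, and the Radon--Nikodym-type identity
\[
\d \Lambda_C^{\sharp}(u) \,=\, \bigl\{1 - \delta \Lambda_T^{\sharp}(u)\bigr\} \,\d \Lambda_C^{\dagger}(u),
\]
derived just before \cref{cor:lambda-sharp-coerc}. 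Both expressions hold on the common support of the relevant measures.

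First I would note that because the $N_C$ terms in $M_C^{\dagger}$ and $M_C^{\sharp}$ cancel,
\[
M_C^{\dagger}(t) - M_C^{\sharp}(t) \,=\, \int_{(0,t]} Y^{\sharp}(u)\,\d \Lambda_C^{\sharp}(u) \,-\, \int_{(0,t]} Y^{\dagger}(u)\,\d \Lambda_C^{\dagger}(u).
\]
Substituting the two identities above into the right-hand side and expanding gives, after cancellation of the $Y^{\sharp}\,\d\Lambda_C^{\dagger}$ terms,
\[
M_C^{\dagger}(t) - M_C^{\sharp}(t) \,=\, \int_{(0,t]} \bigl\{\delta N_T(u) - Y^{\sharp}(u)\,\delta \Lambda_T^{\sharp}(u)\bigr\} \,\d \Lambda_C^{\dagger}(u) \,=\, \int_{(0,t]} \delta M_T^{\sharp}(u) \,\d \Lambda_C^{\dagger}(u),
\]
by the definition of $M_T^{\sharp}$. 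This reduces the problem to verifying that
\[
\int_{(0,t]} \delta M_T^{\sharp}(u) \,\d \Lambda_C^{\dagger}(u) \,=\, \int_{(0,t]} \delta \Lambda_C^{\dagger}(u) \,\d M_T^{\sharp}(u).
\]

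The remaining step is the one where I expect the only real subtlety, and it is purely measure-theoretic. On the left, the integrand $u \mapsto \delta M_T^{\sharp}(u)$ vanishes outside a countable set, so the integral collapses to the sum $\sum_{0<u\leq t} \delta M_T^{\sharp}(u)\,\delta \Lambda_C^{\dagger}(u)$ over atoms of $\Lambda_C^{\dagger}$ at which $M_T^{\sharp}$ also jumps; the continuous part of $\Lambda_C^{\dagger}$ assigns no mass to the countable support of the integrand. On the right, the integrand $u \mapsto \delta \Lambda_C^{\dagger}(u)$ vanishes outside the atoms of $\Lambda_C^{\dagger}$, so the same reasoning applied to $M_T^{\sharp}$ collapses the Lebesgue--Stieltjes integral to the identical sum. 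Equating the two sums yields the stated identity, finishing the proof. Rearranging $M_C^{\dagger}(t) = M_C^{\sharp}(t) + (M_C^{\dagger}(t) - M_C^{\sharp}(t))$ then gives the claim.
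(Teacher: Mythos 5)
Your computation is correct, and its core---cancel the $N_C$ terms, write $Y^{\dagger}=Y^{\sharp}-\delta N_T$, use $\d\Lambda_C^{\sharp}=\{1-\delta\Lambda_T^{\sharp}\}\,\d\Lambda_C^{\dagger}$ to collapse the difference to $\int_{(0,t]}\delta M_T^{\sharp}(u)\,\d\Lambda_C^{\dagger}(u)=\int_{(0,t]}\delta\Lambda_C^{\dagger}(u)\,\d M_T^{\sharp}(u)$---is exactly the paper's computation for $t<\tau$, where $\tau=\sup\{s:\E\{Y^{\sharp}(s)\}>0\}$, so that the identity \cref{eq:Ru} holds pointwise on the whole range of integration. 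Where you genuinely diverge is the tail: the paper proves the case $t\geq\tau$ by an explicit decomposition at $\tau$, treating $\tau\in\mathcal{S}$ and $\tau\in\mathcal{S}^{c}$ separately and tracking $\delta M_C^{\dagger}(\tau)$ by hand, whereas you cover all $t>0$ in one pass by reading $\d\Lambda_C^{\sharp}=\{1-\delta\Lambda_T^{\sharp}\}\,\d\Lambda_C^{\dagger}$ as an identity of measures ``on the common support.'' That shortcut is legitimate but is the one point to make precise: the pointwise derivation of that identity needs $\E\{Y^{\sharp}(u)\}>0$ and $\E\{Y^{\dagger}(u)\}>0$, so you should note that the exceptional set $\{u:\E\{Y^{\dagger}(u)\}=0\}$ is null for $\d\E\{N_C\}$---and hence for $\d\Lambda_C^{\sharp}$ and $\d\Lambda_C^{\dagger}$, which are densities against it---which follows from the argument in \cref{sec:supp:basic-addntl:props} (there $\E\{Y^{\dagger}(s)\}=0$ forces $\E\{N_C(t)-N_C(s)\}=0$ for $t>s$, and $\delta\E\{N_C(s)\}\leq\E\{Y^{\dagger}(s)\}$). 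With that sentence added, your single computation is valid for every $t>0$ and is arguably cleaner than the paper's case split, whose payoff is that the behavior at and beyond $\tau$ is made completely explicit. A smaller difference cuts in your favour: the final interchange of integrand and integrator is asserted without comment in the paper, while you justify it correctly by observing that both sides reduce to the same countable sum $\sum_{0<u\leq t}\delta M_T^{\sharp}(u)\,\delta\Lambda_C^{\dagger}(u)$, the continuous part of each integrator assigning no mass to the countable set where the opposing integrand is nonzero.
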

\noindent The identity provides an alternative proof of \cref{prop:dagger-martingale} which states that $M_C^{\dagger}$ is a martingale, as described in detail in \cref{sec:supp:pred-addntl:alt-proofs}. An alternative proof of \cref{cor:marting-sharp-coerc} is also provided there. 

Now we consider a version of \cref{thm:martingale-transform} which concerns transforms of the martingale $M_C^{\dagger}$. In light of \cref{lem:dag-dm}, such martingale transforms are simply sums of transforms of $M_C^{\sharp}$ and $M_C^{\dagger}$, which are well understood. 
\begin{prop}
\label{prop:pred-trans}
     Assume $\E [ \int_{(0,\infty)} H^2(u) \{ 1 - \delta \Lambda_T^{\sharp}(u) \} \,\d \Lambda_T^{\sharp}(u) ] < \infty$ and $\E [ \int_{(0,\infty)} H^2(u) \{ 1 - \delta \Lambda_C^{\sharp}(u) \} \,\d \Lambda_C^{\sharp}(u) ] < \infty$. If $H$ is $\F_t$-predictable and locally bounded, then $H \boldsymbol{\cdot} M^{\dagger}_{C}$ is a $\F_t$-martingale.
\end{prop}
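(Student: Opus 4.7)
The plan is to reduce the claim to the classical theory of predictable integrands against Doob–Meyer martingales by invoking \cref{lem:dag-dm}. Applying the integrator $\int H(u) \,\d(\cdot)$ to both sides of that identity and using associativity of Lebesgue--Stieltjes integration yields the key decomposition
\begin{equation*}
    (H \boldsymbol{\cdot} M_C^{\dagger})(t) \;=\; (H \boldsymbol{\cdot} M_C^{\sharp})(t) \;+\; \int_{(0,t]} H(u)\, \delta \Lambda_C^{\dagger}(u) \,\d M_T^{\sharp}(u).
\end{equation*}
Since the sum of two martingales is a martingale, it suffices to show that each summand on the right is an $\F_t$-martingale under the stated hypotheses.

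For the first summand, $H$ is $\F_t$-predictable and locally bounded by hypothesis, and the second moment assumption involving $\Lambda_C^{\sharp}$ is precisely the condition ensuring that $\E\langle H \boldsymbol{\cdot} M_C^{\sharp}\rangle(\infty) < \infty$ via the standard predictable variation formula for $M_C^{\sharp}$ (Theorem 2.4.3 of \citet{fleming1991counting}, after bounding $\E\{Y^{\sharp}(u)\} \leq 1$). By Theorems 1.5.1 and 2.4.1 of \citet{fleming1991counting}, $H \boldsymbol{\cdot} M_C^{\sharp}$ is therefore a square-integrable $\F_t$-martingale.

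For the second summand, I would first observe that the deterministic map $u \mapsto \delta \Lambda_C^{\dagger}(u)$ is Borel measurable (hence trivially $\F_t$-predictable when viewed as a process) and satisfies $\delta \Lambda_C^{\dagger}(u) \in [0,1]$ since $\delta N_C(u) \leq Y^{\dagger}(u)$ pointwise implies $\delta \E\{N_C(u)\} \leq \E\{Y^{\dagger}(u)\}$. Consequently the integrand $u \mapsto H(u) \delta \Lambda_C^{\dagger}(u)$ is $\F_t$-predictable and locally bounded. Moreover the associated moment condition satisfies
\begin{equation*}
    \E\int_{(0,\infty)} H^2(u)\,\bigl(\delta \Lambda_C^{\dagger}(u)\bigr)^{\!2}\,\{1-\delta \Lambda_T^{\sharp}(u)\}\,\d \Lambda_T^{\sharp}(u) \;\leq\; \E\int_{(0,\infty)} H^2(u)\,\{1-\delta \Lambda_T^{\sharp}(u)\}\,\d \Lambda_T^{\sharp}(u) \;<\; \infty
\end{equation*}
by the first assumption, so the same Fleming--Harrington result gives that $(H \delta \Lambda_C^{\dagger}) \boldsymbol{\cdot} M_T^{\sharp}$ is a square-integrable $\F_t$-martingale.

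The routine steps are the two applications of the classical martingale transform theorem. The main obstacle I anticipate is bookkeeping rather than substance: verifying the decomposition with appropriate justification of associativity for the Stieltjes stochastic integrals (so that $H$ can be pulled inside the inner integral in \cref{lem:dag-dm}), and confirming the uniform bound $\delta \Lambda_C^{\dagger} \leq 1$ so that local boundedness of $H$ transfers to $H \delta \Lambda_C^{\dagger}$ and so that the first assumption dominates the moment condition required for the $M_T^{\sharp}$-transform. Once these two technical points are in place, the conclusion is immediate.
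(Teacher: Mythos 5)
Your proposal is correct and follows essentially the same route as the paper: decompose $H \boldsymbol{\cdot} M_C^{\dagger}$ via \cref{lem:dag-dm} into $H \boldsymbol{\cdot} M_C^{\sharp}$ plus the transform of $M_T^{\sharp}$ by the deterministic, $[0,1]$-bounded integrand $H\,\delta\Lambda_C^{\dagger}$, then apply the classical Fleming--Harrington predictable-transform theorem to each summand under the stated moment conditions. Your added detail on the bound $\delta\Lambda_C^{\dagger}\leq 1$ and the domination of the moment condition for the $M_T^{\sharp}$ term simply makes explicit what the paper asserts in one line.
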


\noindent Compared with \cref{thm:martingale-transform}, this result assumes that $H$ is $\F_t$-predictable, which is more restrictive than being half-predictable.

We now consider variation and covariation processes of martingale transforms. 
\begin{prop}
\label{prop:pred-covar}
    If $H_1$ and $H_2$ are $\F_t$-predictable and locally bounded, then the following is a local $\F_t$-martingale 
    \begin{equation*}
        (H_1 \boldsymbol{\cdot} M^{\dagger}_{C})(t) \, (H_2 \boldsymbol{\cdot} M^{\dagger}_{C})(t) - \int_{(0,t]} H_1(u) H_2(u) \frac{F^{\sharp}(u) G^{\dagger}(u)}{F^{\sharp}(u-) G^{\dagger}(u-)} Y^{\sharp}(u) \,\d \Lambda_C^{\dagger}(u). \label{eq:pred-var}
    \end{equation*}
    If $H_1$ and $H_2$ are $\F_t$-predictable and locally bounded, then the following is a local $\F_t$-martingale 
    \begin{equation*}
        (H_1 \boldsymbol{\cdot} M^{\dagger}_{C})(t) \, (H_2 \boldsymbol{\cdot} M^{\sharp}_{T})(t).
    \end{equation*}
\end{prop}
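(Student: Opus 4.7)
The plan is to reduce both parts to the standard Fleming--Harrington predictable covariation theory \citep[Theorem 2.6.1]{fleming1991counting} via the linearising identity supplied by \cref{lem:dag-dm}. Specifically, write
\[
H_i \boldsymbol{\cdot} M_C^{\dagger} \, = \, H_i \boldsymbol{\cdot} M_C^{\sharp} + (H_i \,\delta \Lambda_C^{\dagger}) \boldsymbol{\cdot} M_T^{\sharp}, \qquad i=1,2.
\]
Since $\Lambda_C^{\dagger}$ is deterministic and right-continuous, $\delta \Lambda_C^{\dagger}$ is $\F_t$-predictable and bounded by one, so $H_i \, \delta \Lambda_C^{\dagger}$ is predictable and locally bounded whenever $H_i$ is. Each summand on the right is therefore a standard Doob--Meyer martingale transform.

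For the first assertion, I would expand $(H_1 \boldsymbol{\cdot} M_C^{\dagger})(H_2 \boldsymbol{\cdot} M_C^{\dagger})$ bilinearly into four products. By \citet[Theorem 2.6.1]{fleming1991counting}, each product minus its predictable covariation is a local $\F_t$-martingale, where the pertinent covariations are
\[
\d \langle M_j^{\sharp}\rangle \,=\, Y^{\sharp} (1 - \delta \Lambda_j^{\sharp}) \,\d \Lambda_j^{\sharp}, \qquad \d \langle M_T^{\sharp}, M_C^{\sharp}\rangle \,=\, - Y^{\sharp} \delta \Lambda_T^{\sharp} \,\d \Lambda_C^{\sharp}.
\]
Adding the four predictable covariations, invoking the identity $\d \Lambda_C^{\sharp} = (1 - \delta \Lambda_T^{\sharp}) \d \Lambda_C^{\dagger}$ from the display preceding \cref{cor:lambda-sharp-coerc}, and separating continuous from atomic parts of $\Lambda_C^{\dagger}$, the combined centering term should collapse to $\int H_1 H_2 (1 - \delta \Lambda_T^{\sharp})(1 - \delta \Lambda_C^{\dagger}) Y^{\sharp} \,\d \Lambda_C^{\dagger}$. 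Rewriting the jump factors using $1 - \delta \Lambda_T^{\sharp}(u) = F^{\sharp}(u)/F^{\sharp}(u-)$ and $1 - \delta \Lambda_C^{\dagger}(u) = G^{\dagger}(u)/G^{\dagger}(u-)$ then recovers the stated form.

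For the orthogonality assertion, applying \cref{lem:dag-dm} once more reduces the computation to evaluating $\langle H_1 \boldsymbol{\cdot} M_C^{\sharp}, H_2 \boldsymbol{\cdot} M_T^{\sharp}\rangle + \langle (H_1 \delta \Lambda_C^{\dagger}) \boldsymbol{\cdot} M_T^{\sharp}, H_2 \boldsymbol{\cdot} M_T^{\sharp}\rangle$. Using the two covariation formulas above, this equals
\[
\int H_1 H_2 Y^{\sharp} \, \bigl[-\delta \Lambda_C^{\sharp} + \delta \Lambda_C^{\dagger}(1 - \delta \Lambda_T^{\sharp})\bigr] \,\d \Lambda_T^{\sharp},
\]
which vanishes identically by the same identity $\d \Lambda_C^{\sharp} = (1-\delta\Lambda_T^{\sharp}) \d \Lambda_C^{\dagger}$; hence the product is a local $\F_t$-martingale.

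The main obstacle is the algebraic consolidation in the first part: the cross terms supplied by $\langle M_T^{\sharp}, M_C^{\sharp}\rangle$ at shared discontinuities must combine cleanly with the $(\delta \Lambda_C^{\dagger})^2$ contribution from $\langle M_T^{\sharp}\rangle$, and the resulting shared-atom mass must reconstitute the multiplicative form $(1 - \delta \Lambda_T^{\sharp})(1 - \delta \Lambda_C^{\dagger}) \delta \Lambda_C^{\dagger}$. Once that bookkeeping is complete, the remainder follows by routine stochastic-integral arguments, including a standard localisation to accommodate only local boundedness of $H_1$ and $H_2$.
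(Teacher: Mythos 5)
Your proposal is correct and follows essentially the same route as the paper: decompose $M_C^{\dagger}$ via \cref{lem:dag-dm}, compute predictable (co)variations of the resulting transforms of $M_C^{\sharp}$ and $M_T^{\sharp}$ with \citet[Theorems 2.4.3 and 2.6.1]{fleming1991counting}, and collapse the centering term using $\d \Lambda_C^{\sharp} = (1-\delta\Lambda_T^{\sharp})\,\d\Lambda_C^{\dagger}$ and the jump identities $1-\delta\Lambda_T^{\sharp} = F^{\sharp}(u)/F^{\sharp}(u-)$, $1-\delta\Lambda_C^{\dagger} = G^{\dagger}(u)/G^{\dagger}(u-)$. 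The only cosmetic difference is that you carry the integrands $H_1,H_2$ through from the start, whereas the paper first computes $\langle M_C^{\dagger}\rangle$ and $\langle M_C^{\dagger}, M_T^{\sharp}\rangle$ and then inserts the transforms via Theorem 2.4.3.
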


The centering terms in \cref{prop:pred-covar} are $\F_t$-predictable as a direct consequence of applying the standard theory to provide them. In the following corollary, we leverage another algebraic identity analogous to \cref{lem:dag-dm} and stated in \cref{sec:supp:pred-addntl:proofs} to produce the same half-predictable centering terms as in \cref{thm:covar-proc}.

\begin{cor}
\label{thm:pred-covar-proc}
    If $H_1$ and $H_2$ are $\F_t$-predictable and locally bounded, then the following is a local $\F_t$-martingale 
    \begin{equation*}
        (H_1 \boldsymbol{\cdot} M^{\dagger}_{C})(t) \, (H_2 \boldsymbol{\cdot} M^{\dagger}_{C})(t) - \int_{(0,t]} H_1(u) H_2(u) \frac{F^{\sharp}(u)}{F^{\sharp}(u-)} Y^{\dagger}(u) \,\d \Lambda_C^{\dagger}(u). 
    \end{equation*}
    If $H_1$ and $H_2$ are $\F_t$-predictable and locally bounded, then the following is a local $\F_t$-martingale 
    \begin{equation*}
        (H_1 \boldsymbol{\cdot} M^{\dagger}_{C})(t) \, (H_2 \boldsymbol{\cdot} M^{\sharp}_{T})(t).
    \end{equation*}
\end{cor}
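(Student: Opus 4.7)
The plan is to deduce both assertions directly from \cref{prop:pred-covar}. The second assertion, that $(H_1 \boldsymbol{\cdot} M_C^{\dagger})(H_2 \boldsymbol{\cdot} M_T^{\sharp})$ is a local $\F_t$-martingale, is an exact restatement of the second part of \cref{prop:pred-covar} under identical hypotheses, so I would simply invoke that result. The substance of the corollary is therefore the first assertion, where the centering term must be reshaped from the form given in \cref{prop:pred-covar} to the form in the statement, and the reshaping must be implemented modulo a local martingale.

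For this, I would exhibit an algebraic identity, analogous in spirit to \cref{lem:dag-dm}, expressing the difference of the two centering terms as a stochastic integral against the Doob-Meyer martingale $M_T^{\sharp}$. That is, I would look to produce a representation
\begin{equation*}
\int_{(0,t]} H_1(u) H_2(u) \left[ \frac{F^{\sharp}(u) G^{\dagger}(u)}{F^{\sharp}(u-) G^{\dagger}(u-)} Y^{\sharp}(u) - \frac{F^{\sharp}(u)}{F^{\sharp}(u-)} Y^{\dagger}(u) \right] \,\d\Lambda_C^{\dagger}(u) = \int_{(0,t]} K(u) \,\d M_T^{\sharp}(u),
\end{equation*}
for some $\F_t$-predictable, locally bounded process $K$ constructed deterministically from $H_1, H_2, F^{\sharp}, G^{\dagger}$, and $\Lambda_C^{\dagger}$. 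Since integrals of $\F_t$-predictable locally bounded processes against $M_T^{\sharp}$ are local $\F_t$-martingales by the standard theory, subtracting such an integral from the local $\F_t$-martingale furnished by \cref{prop:pred-covar} would yield the local $\F_t$-martingale in the statement.

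To produce the identity I would proceed by a jump-wise calculation. Using $F^{\sharp}(u)/F^{\sharp}(u-) = 1 - \delta\Lambda_T^{\sharp}(u)$ and $G^{\dagger}(u)/G^{\dagger}(u-) = 1 - \delta\Lambda_C^{\dagger}(u)$, together with $Y^{\sharp}(u) - Y^{\dagger}(u) = \delta N_T(u)$, the bracketed integrand should reduce, at each atom of $\Lambda_C^{\dagger}$, to a predictable multiple of $\delta N_T(u) - Y^{\sharp}(u)\delta\Lambda_T^{\sharp}(u) = \delta M_T^{\sharp}(u)$. Since the integrator $\d\Lambda_C^{\dagger}$ is supported on at most countably many atoms, the left-hand side then collapses to a sum over those atoms that can be recognized as the jump representation of a stochastic integral against $\d M_T^{\sharp}$, identifying $K$.

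The main obstacle is the precise bookkeeping at the atoms of $\Lambda_C^{\dagger}$: one must verify that after eliminating $Y^{\dagger}$ in favor of $Y^{\sharp} - \delta N_T$, the residual cross-terms involving $Y^{\sharp}(u)\delta\Lambda_C^{\dagger}(u)$ combine with the compensator $Y^{\sharp}(u)\delta\Lambda_T^{\sharp}(u)$ of $\delta N_T(u)$ in exactly the right way to yield $\delta M_T^{\sharp}(u)$ multiplied by a predictable prefactor rather than some stray non-martingale remainder. This is essentially a finite-difference calculation atom by atom, routine in nature but sensitive to whether processes are evaluated at $u$ or $u-$; once the correct prefactor $K$ is identified, everything else is a direct application of results from \cref{prop:pred-covar} and the standard stochastic integration theory.
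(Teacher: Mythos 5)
Your architecture is the paper's: the second claim is literally the second part of \cref{prop:pred-covar}, and for the first claim the paper likewise expresses the difference between the predictable centering of \cref{prop:pred-covar} and the half-predictable centering as a locally bounded, $\F_t$-predictable integrand against $\d M_T^{\sharp}$ (its \cref{prop:pred-covar2}, precisely the advertised analogue of \cref{lem:dag-dm}), and then adds local martingales, invoking Theorem 2.4.1 of \citet{fleming1991counting} for the transform. So the route is right; the problem is that the step you defer as ``routine bookkeeping'' is exactly where the argument does not close as you have set it up.

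Carrying out your jump-wise reduction against the centering term as printed in the statement gives
\begin{equation*}
    \frac{F^{\sharp}(u) G^{\dagger}(u)}{F^{\sharp}(u-) G^{\dagger}(u-)} Y^{\sharp}(u) - \frac{F^{\sharp}(u)}{F^{\sharp}(u-)} Y^{\dagger}(u)
    = \frac{F^{\sharp}(u)}{F^{\sharp}(u-)} \bigl\{ \delta N_T(u) - Y^{\sharp}(u)\, \delta \Lambda_C^{\dagger}(u) \bigr\},
\end{equation*}
whose compensating part carries $\delta \Lambda_C^{\dagger}(u)$ rather than $\delta \Lambda_T^{\sharp}(u)$; this is not a predictable multiple of $\delta M_T^{\sharp}(u)$, and no predictable, locally bounded $K$ can repair it. Indeed, take $H_1=H_2=1$, $T \ind C$, both supported on $\{1,2\}$ with $\P^*(T=1)=p$ and $\P^*(C=1)=q$, $p \neq q$: then $\E \{ (M_C^{\dagger}(1))^2 \} = (1-p)q(1-q)$ while the printed centering has mean $(1-p)^2 q$ at $t=1$, so the displayed process is bounded with nonconstant mean and cannot be a local $\F_t$-martingale; consequently no representation $\int K \,\d M_T^{\sharp}$ of the centering difference exists, since it would make that process a local martingale. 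The identity you want does hold once the ratio in the half-predictable centering is $G^{\dagger}(u)/G^{\dagger}(u-)$ --- the centering of \cref{thm:covar-proc}, which the surrounding text says the corollary is meant to reproduce, so the printed $F^{\sharp}(u)/F^{\sharp}(u-)$ is evidently a typo --- because then the bracket collapses to $\delta N_T(u) - Y^{\sharp}(u)\delta\Lambda_T^{\sharp}(u) = \delta M_T^{\sharp}(u)$ and one may take $K(u) = H_1(u) H_2(u)\, \delta\Lambda_C^{\dagger}(u)\, Y^{\sharp}(u)\, G^{\dagger}(u)/G^{\dagger}(u-)$, which is exactly the content of the paper's \cref{prop:pred-covar2}. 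With that correction your proposal coincides with the paper's proof; as written, it targets an identity that fails.
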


Although the development in this section is fairly intuitive, it is also somewhat less natural than the earlier development based on half-predictability. Besides requiring a stronger assumption of $\F_t$-predictability, the results in this section break down in more general contexts; %
see \cref{sec:apps:class} for more discussion. 

\section{Applications to the Kaplan Meier Estimator}
\label{sec:apps}

In this section we highlight two applications of the theory developed for $M_C^{\dagger}$. 
Among the most pervasive problems in survival analysis is the estimation of the failure survival function 
\begin{equation*}
    F(t) 
    := \Prodi_{(0,t]} \biggl\{ 1 - \d \Lambda_T(u) \biggr\}.
\end{equation*}
based on an i.i.d. sample of the observed data with size $n$. Denote $\P_n$ as the empirical distribution with expectation $\E_n$. 

Since $F(t) = F^{\sharp}(t)$ under independent censoring and for $t>0$ satisfying $\E \{ Y^{\sharp}(t) \}>0$, by \cref{prop:identif}, this problem is often reduced to estimation of $F^{\sharp}(t)$. The standard nonparametric estimator is the \citet{kaplan1958nonparametric} estimator 
\begin{align*}
    F^{\sharp}_n(t)
    := \Prodi_{(0,t]} \biggl\{ 1 - \frac{\d \E_n \left\{ N_T(u) \right\}}{\E_n \{ Y^{\sharp}(u) \}} \biggr\}, 
\end{align*}
which may be recognized as a plugin estimator given the form of $\Lambda_T^{\sharp}$ and the definition of $F^{\sharp}$. 

The martingale theory in the previous sections was developed for a single observation. Throughout this section we apply minor extensions of these earlier results to a sample of independent observations. %

\subsection{Application I: Variance}

The theory of transforms of $M_C^{\dagger}$ may be applied to calculate the asymptotic variance of the Kaplan Meier estimator since its influence function 
\begin{equation}
    \frac{I(X>t)}{G^{\dagger}(t)} + \int_{(0,t]} \frac{F^{\sharp}(t)}{F^{\sharp}(u)} \frac{\d M_C^{\dagger}(u)}{G^{\dagger}(u)} - F^{\sharp}(t), \label{eq:km-if1}
\end{equation}
which dictates its first-order asymptotic behavior, contains a martingale transform based on $M_C^{\dagger}$.

\begin{prop}
\label{prop:km-var}
    Let $t>0$ and define the variance
    \begin{equation*}
        \sigma^2(t) := \{ F^{\sharp}(t) \}^2 \int_{(0,t]} \frac{1}{G^{\dagger}(u-)} \,\d \left\{ \frac{1}{F^{\sharp}(u)} \right\}. 
    \end{equation*}
    The Kaplan Meier estimator $F^{\sharp}_n(t)$ satisfies $\sqrt{n} \{ F^{\sharp}_n(t) - F^{\sharp}(t) \} \rightsquigarrow \mathcal{N}\{ 0, \sigma^2(t) \}$. 
\end{prop}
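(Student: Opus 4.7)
The plan is to (i) establish asymptotic linearization of $F_n^{\sharp}(t)$ in terms of the influence function displayed in (\ref{eq:km-if1}), (ii) apply the i.i.d.\ central limit theorem, and (iii) evaluate the resulting asymptotic variance using the covariation theory for $M_C^{\dagger}$ developed in \cref{thm:covar-proc}.

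First, I would write $F_n^{\sharp}(t) - F^{\sharp}(t)$ via the Duhamel/product-integral identity as an integral against $\d(\Lambda_{T,n}^{\sharp} - \Lambda_T^{\sharp})$ and apply compact differentiability of the product-integral map (cf.\ \citealt{gill1990survey}) to obtain the per-observation linearization $\sqrt{n}\{F_n^{\sharp}(t) - F^{\sharp}(t)\} = \sqrt{n}\E_n[\psi(t)] + o_P(1)$ with the standard form $\psi(t) = -F^{\sharp}(t)\int_{(0,t]} \d M_T^{\sharp}(u)/\{F^{\sharp}(u)\E[Y^{\sharp}(u)]\}$. To match the claimed form (\ref{eq:km-if1}) I would verify algebraically that $\psi(t) = \phi(t)$, using \cref{lem:dag-dm} to rewrite $\d M_C^{\dagger}$ in terms of $\d M_C^{\sharp}$ and $\d M_T^{\sharp}$, together with the observable identity $I(X > t) + N_T(t) + N_C(t) = 1$ and the product-integral relations among $F^{\sharp}$, $G^{\dagger}$, and the cumulative hazards. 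The i.i.d.\ CLT then yields $\sqrt{n}\E_n[\phi(t)] \rightsquigarrow \mathcal{N}\{0, \E[\phi(t)^2]\}$.

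Second, to evaluate $\E[\phi(t)^2]$, I would decompose $\phi(t) = A + B$ with $A = I(X>t)/G^{\dagger}(t) - F^{\sharp}(t)$ and $B = \int_{(0,t]} H(u) \, \d M_C^{\dagger}(u)$, where the integrand $H(u) = F^{\sharp}(t)/\{F^{\sharp}(u) G^{\dagger}(u)\}$ is bounded and deterministic, hence half-predictable. Applying \cref{thm:covar-proc} to $B$ and taking expectations gives $\E[B^2] = \int_{(0,t]} H(u)^2 \{G^{\dagger}(u)/G^{\dagger}(u-)\} \,\d\E[N_C(u)]$, upon using $\E\{Y^{\dagger}(u)\,\d\Lambda_C^{\dagger}(u)\} = \d\E[N_C(u)]$. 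The terms $\E[A^2]$ and $\E[AB]$ are handled by representing $I(X>t)$ as a deterministic piece plus transforms of $M_T^{\sharp}$ and $M_C^{\dagger}$ and invoking the orthogonality $\E\{(H_1 \cdot M_C^{\dagger})(H_3 \cdot M_T^{\sharp})\} = 0$ from the second conclusion of \cref{thm:covar-proc} to kill cross terms. Standard product-integral simplification using $\d\{1/F^{\sharp}(u)\} = \d\Lambda_T^{\sharp}(u)/F^{\sharp}(u)$ together with $\E[Y^{\sharp}(u)] = F^{\sharp}(u-)G^{\dagger}(u-)$, valid under independent censoring, then delivers the Greenwood-type expression $\sigma^2(t)$.

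The hard part will be the variance reduction in the third step: verifying the equivalence of the $M_T^{\sharp}$- and $M_C^{\dagger}$-based influence functions, and then combining $\E[A^2]$, $\E[AB]$, and $\E[B^2]$ into the clean form $\sigma^2(t)$. The reduction depends critically on the orthogonality of $M_T^{\sharp}$ and $M_C^{\dagger}$ furnished by \cref{thm:covar-proc}, which holds without any no-shared-discontinuities restriction; this is precisely where the repaired martingale theory does substantive work, since the corresponding calculation via the Doob--Meyer martingale $M_C^{\sharp}$ would require additional assumptions (cf.\ \cref{cor:marting-sharp-coerc}) to achieve the same orthogonality and hence the same compact variance expression.
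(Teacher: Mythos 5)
Your proposal is sound and lands on the same variance computation, but it is routed differently from the paper's proof. The paper does not re-derive asymptotic linearity of the Kaplan--Meier estimator: it takes the influence function \cref{eq:km-if1} as known from the literature and then computes the variance of that single-observation function in three pieces --- the variance of $I(X>t)/G^{\dagger}(t)$ is an elementary Bernoulli variance via $\P(X>t)=F^{\sharp}(t)G^{\dagger}(t)$; the cross term is computed directly by noting that on the event $X>t$ one has $N_C(u)=0$ and $Y^{\dagger}(u)=1$ for all $u\le t$, so only the compensator part of $M_C^{\dagger}$ survives inside the expectation; and the variance of the $M_C^{\dagger}$-transform uses \cref{thm:covar-proc} exactly as you propose, followed by integration by parts. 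Your plan replaces the first ingredient by a Duhamel/functional-delta-method derivation of the linearization (heavier, but more self-contained than the paper's citation of the known influence function), and replaces the direct treatment of $\E[A^2]$ and $\E[AB]$ by re-expressing $I(X>t)/G^{\dagger}(t)$ through martingale transforms and invoking the $M_T^{\sharp}$--$M_C^{\dagger}$ orthogonality of \cref{thm:covar-proc}; this works and in effect collapses $\E[\phi(t)^2]$ to the variance of the single $M_T^{\sharp}$-transform in \cref{eq:km-if2}, but it presupposes the algebraic equivalence of \cref{eq:km-if1} and \cref{eq:km-if2}, which the paper itself defers to \citet{baer2023recurrent} rather than proving. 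One concrete slip to correct: your ``standard form'' $\psi(t)=-F^{\sharp}(t)\int_{(0,t]}\d M_T^{\sharp}(u)/\{F^{\sharp}(u)\,\E[Y^{\sharp}(u)]\}$ has an extra factor $F^{\sharp}(u-)$ in the denominator; since $\E\{Y^{\sharp}(u)\}=F^{\sharp}(u-)G^{\dagger}(u-)$, the Duhamel linearization yields the denominator $F^{\sharp}(u)G^{\dagger}(u-)$ as in \cref{eq:km-if2}. The two agree only when $F^{\sharp}$ is continuous, and the entire point of this development is to avoid that assumption --- your planned verification that $\psi=\phi$ would expose the discrepancy, but the correct form should be stated from the outset.
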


The asymptotic variance was first derived by \citet{greenwood1926report} in a rather different setting. The results of nonparametric efficiency theory show that $\sigma^2(t)$ is the nonparametric efficiency bound for estimating $F^{\sharp}(t)$ and hence that the Kaplan Meier estimator $F^{\sharp}_n(t)$ is asymptotically efficient \citep{bickel1993efficient}.

\subsection{Application II: Covariance}

The theory of transforms of $M_C^{\dagger}$ may again be applied to calculate the asymptotic covariance of the Kaplan Meier estimator at distinct time points. The influence function of the Kaplan Meier estimator, reported in \cref{eq:km-if1}, may be algebraically rewritten using integration by parts as
\begin{equation}
    - \int_{(0,t]} \frac{F^{\sharp}(t)}{F^{\sharp}(u)} \frac{\d M_T^{\sharp}(u)}{G^{\dagger}(u-)}, \label{eq:km-if2}
\end{equation}
as discussed by \citet[Section 5.1.3]{baer2023recurrent}. 

We may therefore calculate the covariance using the covariation process between transforms of $M_C^{\sharp}$ and $M_C^{\dagger}$. 

\begin{prop}
\label{prop:cox-km-covar}
    Let $s,t>0$ and define the covariance
    \begin{equation*}
        \sigma^2(s, t) := F^{\sharp}(s) F^{\sharp}(t) \int_{(0,s \wedge t]} \frac{1}{G^{\dagger}(u-)} \,\d \left\{ \frac{1}{F^{\sharp}(u)} \right\}. 
    \end{equation*}
    The Kaplan Meier estimator $F^{\sharp}_n$ satisfies that the vector $\Bigl( F^{\sharp}_n(s), F^{\sharp}_n(t) \Bigr)$ is asymptotically normal with expectation zero and covariance $\sigma^2(s,t)$. 
\end{prop}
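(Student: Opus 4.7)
The plan is to mirror the template of \cref{prop:km-var} and reduce the joint convergence claim to an i.i.d.\ covariance calculation. Since $F^{\sharp}_n(r)$ is asymptotically linear with influence function $\psi_r$ given by \cref{eq:km-if1} (equivalently by \cref{eq:km-if2}), one has for each $r>0$
\begin{equation*}
\sqrt{n}\bigl\{F^{\sharp}_n(r) - F^{\sharp}(r)\bigr\} = \sqrt{n}\, \E_n[\psi_r] + o_P(1).
\end{equation*}
Applying this at $r=s$ and $r=t$ and invoking the bivariate central limit theorem, the vector $(\sqrt{n}\{F^{\sharp}_n(s) - F^{\sharp}(s)\},\, \sqrt{n}\{F^{\sharp}_n(t) - F^{\sharp}(t)\})$ converges to a centered bivariate normal whose off-diagonal entry is $\E[\psi_s \psi_t]$; the diagonal entries $\sigma^2(s) = \sigma^2(s,s)$ and $\sigma^2(t) = \sigma^2(t,t)$ are already supplied by \cref{prop:km-var}. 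It remains to show $\E[\psi_s \psi_t] = \sigma^2(s,t)$.

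For this computation I would use the single-martingale representation \cref{eq:km-if2},
\begin{equation*}
\psi_r = -\int_{(0,r]} \frac{F^{\sharp}(r)}{F^{\sharp}(u)\, G^{\dagger}(u-)}\, \d M_T^{\sharp}(u),
\end{equation*}
in which $\psi_r$ is a transform of the Doob-Meyer martingale $M_T^{\sharp}$ by the deterministic, left-continuous integrand $u \mapsto F^{\sharp}(r)/[F^{\sharp}(u) G^{\dagger}(u-)]$, which is bounded on $(0, s \vee t]$ provided $G^{\dagger}((s \vee t)-) > 0$. The standard predictable covariation theory for transforms of $M_T^{\sharp}$ \citep[Theorem 2.6.1]{fleming1991counting} then yields
\begin{equation*}
\E[\psi_s \psi_t] = F^{\sharp}(s) F^{\sharp}(t) \int_{(0, s \wedge t]} \frac{\E[Y^{\sharp}(u)]\{1 - \delta \Lambda_T^{\sharp}(u)\}}{F^{\sharp}(u)^2\, G^{\dagger}(u-)^2}\, \d \Lambda_T^{\sharp}(u).
\end{equation*}

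The remaining simplification is algebraic. Under independent censoring, \cref{prop:identif} gives $\E[Y^{\sharp}(u)] = F^{\sharp}(u-) G^{\dagger}(u-)$; the product-integral relation gives $1 - \delta \Lambda_T^{\sharp}(u) = F^{\sharp}(u)/F^{\sharp}(u-)$; and differentiating the reciprocal survival gives $\d \Lambda_T^{\sharp}(u)/F^{\sharp}(u) = \d\{1/F^{\sharp}(u)\}$. Substituting collapses the integrand to $\d\{1/F^{\sharp}(u)\}/G^{\dagger}(u-)$, which is exactly $\sigma^2(s,t)$.

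The main obstacle is justifying the $o_P(1)$ remainder in the influence-function expansion, which demands uniform control of $F^{\sharp}_n$ and $G^{\dagger}_n$ on $(0, s \vee t]$; the bounds needed are the same as those already exercised in the proof of \cref{prop:km-var}, so the joint statement introduces no genuinely new analytic difficulty. A route that stays entirely inside the paper's new theory would apply the first part of \cref{thm:covar-proc} to the $M_C^{\dagger}$ transforms in representation \cref{eq:km-if1}, but one would then need to dispatch cross terms with the boundary piece $I(X > r)/G^{\dagger}(r) - F^{\sharp}(r)$, which itself decomposes into a sum of $M_T^{\sharp}$ and $M_C^{\dagger}$ transforms; the two routes are ultimately equivalent, and the $M_T^{\sharp}$-only path above is the more direct one.
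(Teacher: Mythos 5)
Your proof is correct, but it takes a genuinely different route from the paper's. You represent \emph{both} coordinates through the single classical martingale $M_T^{\sharp}$ via \cref{eq:km-if2} and compute $\E[\psi_s\psi_t]$ from the standard predictable covariation $\d\langle M_T^{\sharp}\rangle(u)=Y^{\sharp}(u)\{1-\delta\Lambda_T^{\sharp}(u)\}\,\d\Lambda_T^{\sharp}(u)$, after which the algebra with $\E\{Y^{\sharp}(u)\}=F^{\sharp}(u-)G^{\dagger}(u-)$, $1-\delta\Lambda_T^{\sharp}(u)=F^{\sharp}(u)/F^{\sharp}(u-)$ and $\d\Lambda_T^{\sharp}(u)/F^{\sharp}(u)=\d\{1/F^{\sharp}(u)\}$ collapses the integrand exactly to $\sigma^2(s,t)$; this checks out (and a minor remark: the identity $\E\{Y^{\sharp}(u)\}=F^{\sharp}(u-)G^{\dagger}(u-)$ does not actually need independent censoring, since $\d\Lambda_C^{\sharp}=(1-\delta\Lambda_T^{\sharp})\,\d\Lambda_C^{\dagger}$ gives $\P(X>u)=F^{\sharp}(u)G^{\dagger}(u)$ in general, the same fact the paper uses without comment in \cref{prop:km-var}). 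The paper instead deliberately mixes the two representations: it keeps \cref{eq:km-if1} (the $M_C^{\dagger}$-based form, including the boundary term $I(X>s)/G^{\dagger}(s)$) at time $s$ and \cref{eq:km-if2} at time $t$, evaluates the indicator-versus-$M_T^{\sharp}$ covariance by a direct elementary calculation, and kills the cross term $\int H_1\,\d M_C^{\dagger}\cdot\int H_3\,\d M_T^{\sharp}$ using the new orthogonality half of \cref{thm:covar-proc}. What each buys: your route is more elementary, stays entirely within Fleming--Harrington theory, and makes \cref{prop:km-var} the special case $s=t$; the paper's route is chosen to showcase the new $M_C^{\dagger}$ machinery (which is the stated point of the application), and it sidesteps the $\{1-\delta\Lambda_T^{\sharp}\}$ covariation correction in favor of the exact orthogonality of $M_C^{\dagger}$- and $M_T^{\sharp}$-transforms, which holds even with shared discontinuities. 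Your deferral of the $o_P(1)$ influence-function remainder is at the same level of rigor as the paper, which likewise cites semiparametric theory for asymptotic linearity; you do implicitly need $F^{\sharp}(s\vee t)>0$ and $G^{\dagger}((s\vee t)-)>0$, conditions the paper also leaves tacit.
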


Although the result derived here considers only two time points $s,t>0$, existing results show that the function $F^{\sharp}_n$ is asymptotically a Gaussian process with kernel $\sigma^2$ \citep{andersen1993statistical}.

\subsection{Application III: the class of estimators}
\label{sec:apps:class}

In this section, we consider estimation of the marginal functional
\begin{equation*}
    F^{\sharp}(t)
    = \Prodi_{(0,t]} \left\{ 1 - \frac{\d \E \{ N_T(u) \}}{\E \{Y^{\sharp}(u) \}} \right\}
\end{equation*}
when more data is available and other estimators are available besides the Kaplan Meier estimator. 

Consider that data on a finite-dimensional time-varying variable $L$ is available in addition to the data on $N_T, N_C$ considered previously. Denote $\bar{L}(t) = \{ L(u) \, : \, 0 \leq u \leq t\}$ as the history of the variable. With this additional data, the natural filtration is now $\F_t := \sigma\{ L(u), \, 0 \leq u \leq t ; \hspace{1mm} N_T(u), N_C(u), 0 < u \leq t \}$ and also $\G_t := \sigma\{L(u), 0 \leq u \leq t; \hspace{1mm} N_T(u) \,:\, 0 < u \leq t; \hspace{1mm} N_C(u) \,:\, 0 \leq u < t\}$. The results in \cref{sec:basic,sec:martingale-trans} may be readily verified as continuing to hold in this setting. 

The following result gives the influence function of all regular and asymptotically linear estimators for $F^{\sharp}(t)$, which characterizes all well-behaved (i.e. regular and asymptotically linear) estimators \citep{tsiatis2006semiparametric}. As with $F^{\sharp}$, the definitions of $G^{\dagger}$ and $M_C^{\dagger}$ are unchanged. 

\begin{prop}[Theorem 3 in \citet{baer2023recurrent}]
\label{prop:class}
    If $G^{\dagger}$ is known, the class of influence function for estimating $F^{\sharp}(t)$ is
    \begin{equation*}
        \frac{\Delta}{G^{\dagger}(X-)} I(X>t) + \int_{(0,\infty)} H\{u; \bar{L}(u)\} \,\d M_C^{\dagger}(u)  - F^{\sharp}(t),
    \end{equation*}
    where $H$ is sufficiently regular index function that varies to generate the class. 
\end{prop}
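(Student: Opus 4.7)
The plan is the standard semiparametric efficiency recipe: exhibit one particular influence function, show that augmenting it by elements orthogonal to the nuisance tangent space yields further influence functions, and then prove completeness, i.e., that every regular asymptotically linear estimator has an influence function of the stated form. The tools are \cref{prop:dagger-martingale,thm:martingale-transform,thm:covar-proc,prop:identif}, each extended to the enlarged filtrations $\F_t$ and $\G_t$ that track $\bar{L}$ described just before the proposition.

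First, I would verify that the IPCW piece $\varphi_0 := \Delta I(X>t)/G^{\dagger}(X-) - F^{\sharp}(t)$ is an influence function. Conditioning on $(T,\bar{L})$ and invoking conditional independent censoring given $\bar{L}$, together with $G^{\dagger}$ being known, yields $\E\{\Delta I(X>t)/G^{\dagger}(X-)\} = \E\{I(T>t)\} = F^{\sharp}(t)$, so $\varphi_0$ is mean zero. A pathwise derivative calculation along regular parametric submodels for the law of $(T,\bar{L})$ recovers the gradient of $F^{\sharp}(t)$, confirming that $\varphi_0$ is a gradient.

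Second, I would argue that adding any augmentation $\int H\{u;\bar{L}(u)\}\,\d M_C^{\dagger}(u)$ preserves the influence-function property: mean-zeroness follows from \cref{thm:martingale-transform} since the integrand is half-predictable in the enlarged filtration, and orthogonality to every score for the complete-data law of $(T,\bar{L})$ follows from \cref{thm:covar-proc}, since such scores decompose into transforms of $M_T^{\sharp}$ and functions of baseline covariates, both orthogonal to $\int H\,\d M_C^{\dagger}$ by the second part of \cref{thm:covar-proc} and elementary conditioning.

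The main obstacle is the completeness direction: every influence function must differ from $\varphi_0$ by a stochastic integral of the claimed form. Here I would decompose $L^2_0$ of the observed data into the closure of the complete-data tangent space and its orthogonal complement, and then establish a predictable-representation-type theorem identifying the latter with $\{\int H\,\d M_C^{\dagger} : H \text{ half-predictable and suitably integrable}\}$. This parallels the representations in \citet{tsiatis2006semiparametric}, but with $M_C^{\dagger}$ replacing $M_C^{\sharp}$ and half-predictability replacing $\F_t$-predictability. The subtlety is precisely that the failure and censoring distributions may share atoms; by \cref{cor:marting-sharp-coerc} the Doob-Meyer martingale $M_C^{\sharp}$ then generates strictly too small an orthogonal complement, so the present theory is needed to span it.
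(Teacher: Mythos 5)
You should note at the outset that the paper itself contains no proof of \cref{prop:class}: it is imported verbatim as Theorem 3 of \citet{baer2023recurrent}, so there is no in-paper argument to compare against and your sketch must stand on its own. As it stands it has a genuine gap. The decisive step --- your ``completeness direction,'' i.e.\ that when $G^{\dagger}$ is known the orthogonal complement of the observed-data tangent space is exactly the (closure of the) set $\{\int H\{u;\bar{L}(u)\}\,\d M_C^{\dagger}(u)\}$ --- is precisely the content of the cited theorem, and you only name it (``establish a predictable-representation-type theorem'') without indicating how it would be proved when the failure and censoring distributions share discontinuities. That representation is where all the difficulty lives: the classical calculations in \citet{tsiatis2006semiparametric} and \citet{van2003unified} yield the analogous space built from $M_C^{\sharp}$ with $\F_t$-predictable integrands, and \cref{cor:marting-sharp-coerc} shows this coincides with the stated space only under the no-shared-discontinuity condition; replacing it requires a new argument (e.g.\ an explicit factorization of the observed-data likelihood in which the censoring scores generate exactly the half-predictable integrals against $M_C^{\dagger}$), which your outline does not supply. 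Appealing to \cref{thm:martingale-transform,thm:covar-proc} legitimately handles the easy inclusion (the stated functions are mean-zero gradients), but not the reverse inclusion that makes this a characterization of \emph{all} influence functions.

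There is also a conceptual misstep in your first step: you treat the target as a latent survival probability and justify $\E\{\Delta I(X>t)/G^{\dagger}(X-)\}=F^{\sharp}(t)$ by ``conditional independent censoring given $\bar{L}$,'' an assumption made nowhere in the proposition. The parameter is the observed-data functional $F^{\sharp}(t)$ and the weight is the \emph{marginal} $G^{\dagger}(X-)$; under censoring that is only conditionally independent given covariates, marginally weighted IPCW would not recover a latent survival function, so your route both assumes too much and targets the wrong identity. The correct verification is algebraic on observed-data quantities: $\P(X \geq u) = F^{\sharp}(u-)\,G^{\dagger}(u-)$ (the factorization already used in the proof of \cref{prop:km-var}) together with the Duhamel relation $\d F^{\sharp}(u) = -F^{\sharp}(u-)\,\d\Lambda_T^{\sharp}(u)$ gives $\E\{\Delta I(X>t)/G^{\dagger}(X-)\}=F^{\sharp}(t)$, modulo the usual condition at $\tau=\sup\{u : \E\{Y^{\sharp}(u)\}>0\}$, with no latent-variable assumption at all. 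Relatedly, your orthogonality argument is too coarse once $L$ is time-varying: observed-data scores do not reduce to ``transforms of $M_T^{\sharp}$ plus functions of baseline covariates'' --- the tangent space also contains scores for the conditional law of the increments of $L$ given the past --- so orthogonality of the augmentation space must be established against that larger collection, not only via the second half of \cref{thm:covar-proc}.
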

\noindent Note that the influence function of the Kaplan-Meier estimator belongs to this class; see Lemma 1 in \citet{baer2023recurrent}. 

When $L$ is left-continuous, it is $\F_t$-predictable, so the integral in \cref{prop:class} may be shown to be a martingale transform using the predictability-based techniques in \cref{sec:pred}. More generally, it may not be left-continuous; for example, \citet{baer2023recurrent} develop \cref{prop:class} in the context of $L$ being a recurrent event process which is clearly not $\F_t$-predictable. In general, the theory developed in \cref{sec:martingale-trans} may be used to study the influence functions.

\section{Discussion}
\label{sec:disc}

Although the basic survival analysis setting involving the time at-risk $X$ and the failure indicator $\Delta$ has been intensely studied, the vast majority of existing work has exclusively focused on the observed failure process $N_T$. We showed that the observed censoring process $N_C$ arises naturally in the study of $N_T$ and, surprisingly, that proper analysis differs in some important ways from that of $N_T$. The familiar terms $Y^{\sharp}, \Lambda_T^{\sharp}, M_T^{\sharp}$ are appropriate for the direct study of $N_T$ while the somewhat unfamiliar terms $Y^{\dagger}, \Lambda_C^{\dagger}, M_C^{\dagger}$ are most appropriate for the study of $N_C$. We developed an understanding of $M_C^{\dagger}$ through half-predictability that lead to general results on martingale transforms and covariation processes. 

The relationship between $\Delta = I(T \leq C)$ and the latent variables $T,C$ 
imposes that that censoring occurs after failure, even when coincident \citep{kaplan1958nonparametric}; as a pivotal consequence, ties (i.e. that $T=C$) are not considered to be observable. This may be updated by defining the failure indicator instead by $\Delta = I(T \leq C) + I(T = C) \in \{0,1,2\}$ \citep{langberg1978converting}. Equivalently, the observed failure and censoring processes could be defined as
\begin{equation*}
    N_T^{'}(u) := I( T \leq u , T \leq C ), \hspace{5mm}
    N_C^{'}(u) := I( C \leq u , C \leq T );
\end{equation*}
in this case, both jump at a tie, and $(N_C^{'}, N_T^{'})$ is no longer a multivariate counting process. Alternatively, a third process could be introduced that only jumps at ties.

A natural extension of our work would be to competing risks where multiple types of failure may coincide with each other. 
Additionally, extending the study of half-predictability to incorporate localization would interesting.

\section*{Acknowledgements}

The authors thank Ashkan Ertefaie for helpful conversations during the initial stages of the work. %
BB was partially supported by the National Institute of Neurological Disorders and Stroke (R61/R33 NS120240).

\bibliographystyle{apalike}
\bibliography{refs}

\newpage
\appendix

\DoToC

\section{Additional Material for \texorpdfstring{\cref{sec:basic}}{Section 2}}
\label{sec:supp:basic-addntl}

\subsection{Basic properties of \texorpdfstring{$\Lambda_C^{\dagger}$}{Lambda C dagger}}
\label{sec:supp:basic-addntl:props}

We start by showing that $\Lambda_C^{\dagger}$ is well-defined. First, we establish that $\E \{ N_C(t) - N_C(s) \} > 0$ implies that $\E \{ Y^{\dagger}(s) \} > 0$ for all $t > s>0$. According to the definitions of $N_C$ and $Y^{\dagger}$, we see that $\E \{ N_C(t) - N_C(s) \} = \P (\Delta=0, s < X \leq t)$ and $\E \{ Y^{\dagger}(s) \} = \P(X > s) + \P(X=s, \Delta=0)$. Since $\P (\Delta=0, s < X \leq t) \leq \P (\Delta=0, X > s)$, it is now easy to see that $\E \{ Y^{\dagger}(s) \} > 0$.
Next, we show that $\E \{ Y^{\dagger}(s) \} = 0$ implies 
$\E \{ N_C(t) - N_C(s) \} = 0$ for all $t > s>0$. Here,
$\E \{ Y^{\dagger}(s) \} = 0$ necessarily implies 
$\P (\Delta=0, X > s) = 0$; hence, we must also have
$\E \{ N_C(t) - N_C(s) \} = \P (\Delta=0, s < X \leq t) = 0$.
Under the usual convention that 0/0 = 0, 
it follows that $\Lambda_C^{\dagger}(t) < \infty$ is finite for $t \in {\cal S}$ and constant and well-defined for $t \in {\cal S}^c$.

Finally, we show that $\Lambda_C^{\dagger}$ is right-continuous. In
particular, note that
\begin{align*}
    \lim_{s \searrow t} \int_{(0, s]} \frac{\d \E \{N_C(u)\} }{\E\{Y^{\dagger}(u)\}}
    & = \lim_{s \searrow t} \int_{(0, \infty)} I(u \leq s) \frac{\d \E \{N_C(u)\} }{\E\{Y^{\dagger}(u)\}} \\
    & = \int_{(0, \infty)} \left\{ \lim_{s \searrow t} I(u \leq s) \right\} \frac{\d \E \{N_C(u)\} }{\E\{Y^{\dagger}(u)\}} \\
    & = \int_{(0, \infty)} I(u \leq t) \frac{\d \E \{N_C(u)\} }{\E\{Y^{\dagger}(u)\}} \\
    & = \int_{(0, t]} \frac{\d \E \{N_C(u)\} }{\E\{Y^{\dagger}(u)\}},
\end{align*}
where the second equality follows from the monotone convergence theorem.

\subsection{Proofs}
\label{sec:supp:basic-addntl:proofs}

Throughout the proofs in this section we adopt the following helpful notation to study tail behavior. Define the sets ${\cal S}^- = \{s: \E[Y^{\sharp}(s+)] >0 \}$, 
${\cal S} = \{s: \E[Y^{\dagger}(s)] >0 \}$,
and ${\cal S}^+ = \{s: \E[Y^{\sharp}(s)] >0 \}$. Then, because 
$Y^{\sharp}(s+) \leq Y^{\dagger}(s) \leq Y^{\sharp}(s)$ for $s > 0$,
it can be shown that ${\cal S}^-
\subseteq {\cal S} \subseteq 
{\cal S}^+$. 
Define $\tau = 
\sup {\cal S}^+;$ then, 
because $\sup {\cal S}^+ = \sup {\cal S}^-$,
it follows that 
$\tau = \sup {\cal S}$.

\begin{proof}[Proof of \cref{prop:sharp-martingale}]
    These results are well-known. For example, Theorems 1.3.1 and 1.3.2 of \citet{fleming1991counting} can be used to establish the result for $M_T^{\sharp};$ parallel calculations then prove that $M_C^{\sharp}$ is a ${\cal F}_t$-martingale. Uniqueness follows from Theorem 1.4.1 in \citet{fleming1991counting}. 
\end{proof}

\begin{proof}[Proof of \cref{prop:identif}]
For $t \in {\cal S}$, we see that
    \begin{equation*}
        \Lambda_C^{\dagger}(t)
        = \int_{(0, t]} \frac{\d \P(X \leq u, \Delta=0 )}{\P^*(T > u, C \geq u )} 
        = \int_{(0, t]} \frac{\P^*(T > u ) \,\d \P^*(C \leq u )}{\P^*(T > u, C \geq u )} 
        = \int_{(0, t]} \frac{\d \P^*(C \leq u )}{\P^*(C \geq u )}.
    \end{equation*}
    The second equality follows from iterated expectation, that is, 
    \begin{equation*}
        \P(X \leq t, \Delta=0 ) 
        =\, \P^*(C \leq t, T>C) 
        =\, \int_{(0, t]} \P^*(T > u ) \,\d \P^*(C \leq u).
    \end{equation*}
    Note the central importance of the asymmetric definition of $\Delta = I(T \leq C)$ in this expression.

    The proof that $\Lambda^\sharp_T(t) = \Lambda_T(t)$ for $t \in  {\cal S}^+$ is well-known and follows similarly. 
\end{proof}

\begin{proof}[Proof of \cref{cor:lambda-sharp-coerc}]
    \cref{sec:supp:basic-addntl:props} shows that $\Lambda_C^{\dagger}$ is right-continuous. To prove the first
    result, we consider the cases
    $t < \tau$ and 
    $t \geq \tau$ separately.

    First, suppose that $t < \tau$.
    Then, $\E \{Y^{\sharp}(t) \} \geq \E \{Y^{\dagger}(t) \} > 0$. 
    In this case, we may write
     \begin{align*}
        \Lambda_C^{\dagger}(t) - \Lambda_C^{\sharp}(t)
        & = \int_{(0,t]} \left\{ 1 -  \frac{\E\{Y^{\dagger}(u)\}}{\E\{Y^{\sharp}(u)\}} \right\} \frac{\d \E \{ N_C(u) \} }{\E\{Y^{\dagger}(u)\}} \\
        & = \int_{(0,t]} \frac{\E\{Y^{\sharp}(u)\} - \E\{Y^{\dagger}(u)\}}{\E\{Y^{\sharp}(u)\}} \frac{\d \E \{ N_C(u) \} }{\E\{Y^{\dagger}(u)\}} \\
        & = \int_{(0,t]} \frac{\delta \E\{N_T(u)\}}{\E\{Y^{\sharp}(u)\}} \frac{\d \E \{ N_C(u) \} }{\E\{Y^{\dagger}(u)\}} \\
        & = \sum_{ 0 < u \leq t} \frac{\delta \E\{ N_T(u) \} \, \delta \E \{ N_C(u) \}}{\E\{Y^{\sharp}(u)\} \E\{Y^{\dagger}(u)\}} \\
        & = \sum_{ 0 < u \leq t} 
        \delta \Lambda_T^\sharp(u)
        \delta \Lambda_C^\dagger(u).
    \end{align*}
    The summation in this last expression vanishes if and only if $\Lambda_T^\sharp(u)$ and 
    $\Lambda_C^\dagger(u)$ share no discontinuities on $(0,t]$. 
    This establishes for
    $t < \tau$ that $\Lambda_C^{\dagger}(t) = \Lambda_C^{\sharp}(t)$ if and only if $\Lambda_T^\sharp(u)$ and 
    $\Lambda_C^\dagger(u)$ share no discontinuities on $(0,t]$.

    Next, we consider the case where
    $t \geq \tau$. We begin by noting
    that for such $t$, we can write
    \[
\Lambda^\dagger_C(t) - \Lambda^\sharp_C(t)
 = 
 \left\{ \Lambda^\dagger_C(\tau-) - \Lambda^\sharp_C(\tau-) \right\}
 + 
 \left\{
 \delta \Lambda^\dagger_C( \tau ) - \delta \Lambda^\sharp_C( \tau ) \right\}
 +
 \left\{
\Lambda^\dagger_C(t) - \Lambda^\dagger_C(\tau) -
\Lambda^\sharp_C(t) + \Lambda^\sharp_C(\tau) 
\right\}.
\]
By the arguments given previously,
the first term is zero
if and only if 
$\delta \Lambda_T^\sharp(u)
\delta \Lambda_C^\dagger(u) = 0$
for every $u < \tau$.  
Hence, we must only establish
conditions under which the 
remaining terms on the right-hand side
are zero. To do so, it is helpful to
separately consider
the cases where $\tau \in {\cal S}^c$
and $\tau \in {\cal S}$.

Suppose $\tau \in {\cal S}^c$. Then,
note that $\E \{ Y^\sharp(u) \} = \P (X \geq u ) = 0$ for any $u \geq \tau$. 
Following arguments similar
to those in 
\cref{sec:supp:basic-addntl:props},
it is easy to show that 
$\P (vX \geq u ) = 0$ implies
that $\E \{ Y^\dagger(u) \} = 0$
and hence that $\delta \E\{ N_T(u) \}
= \delta \E \{N_C(u)\} = 0$. Therefore,
\[
\left\{
 \delta \Lambda^\dagger_C( \tau ) - \delta \Lambda^\sharp_C( \tau ) \right\}
 +
 \left\{
\Lambda^\dagger_C(t) - \Lambda^\dagger_C(\tau) -
\Lambda^\sharp_C(t) + \Lambda^\sharp_C(\tau) 
\right\} = 0
\]
there and the result stated 
in the proposition holds.

Now, suppose that $\tau \in {\cal S}$.
In this case, 
$(\tau,t] \in {\cal S}^c$ and the argument just given
can again be used to prove that
\[
    \Lambda^\dagger_C(t) - \Lambda^\dagger_C(\tau) -
\Lambda^\sharp_C(t) + \Lambda^\sharp_C(\tau) 
    = 0.
\]
It therefore suffices to consider
what happens to 
$
 \delta \Lambda^\dagger_C( \tau ) - \delta \Lambda^\sharp_C( \tau )
$
when $\tau \in {\cal S}$.
In this case,  we must have $\delta \Lambda^\dagger_C(\tau) > 0$
and $\delta \Lambda^\sharp_C(\tau) > 0;$
since these differ only by the choice
of denominator, equality holds if and
only if 
$\E\{ Y^\sharp(\tau) \} = 
\E\{ Y^\dagger(\tau) \}$.
However, 
$\E\{ Y^\sharp(\tau) \} = 
\E\{ Y^\dagger(\tau) \}$
if and only if 
$\delta \E\{ N_T(\tau) \} = 0$,
or equivalently, $\delta \Lambda^\dagger_T(\tau) = 0$. 
Hence, when
$\tau \in {\cal S}$,
$
 \delta \Lambda^\dagger_C( \tau ) - \delta \Lambda^\sharp_C( \tau ) = 0
$
if and only if 
$\delta \Lambda^\sharp_C( \tau )
\delta \Lambda^\sharp_T(\tau) = 0$.

Taken together, the above arguments
establish that, for each $t>0$,
$
\Lambda^\dagger_C( t ) = 
\Lambda^\sharp_C( t )
$
if and only if
\[
  \delta \Lambda_T^\sharp(u)
        \delta \Lambda_C^\dagger(u) = 0
\]
for all $u \in (0,t]$. %
\end{proof}

\begin{proof}[Proof of \cref{prop:dagger-martingale}]
    The stochastic process $M_C^{\dagger}(t)$ is clearly adapted. For $t>0$, $M_C^{\dagger}(t)$ is also absolutely integrable since 
    \begin{align*}
     \E \left\{ \left|N_C(t) - \int_{(0,t]} Y^{\dagger}(u) \,\d \Lambda_C^{\dagger}(u) \right| \right\}
     & \leq \E \{ N_C(t) \} +  \E \left\{ \int_{(0,t]} Y^{\dagger}(u) \,\d \Lambda_C^{\dagger}(u) \right\}  \\
     & \hspace{-25mm} = \E \{ N_C(t) \} + \int_{(0,t]} \E \{ Y^{\dagger}(u) \} \,\d \Lambda_C^{\dagger}(u) \\
     & \hspace{-25mm} = \E \{ N_C(t) \} + \int_{(0,t]} \E \{ Y^{\dagger}(u) \} \frac{\d \E \{N_C(u)\} }{\E\{Y^{\dagger}(u)\}} \\
     & \hspace{-25mm} = \E \{ N_C(t) + N_C(t)\} \\
     & \hspace{-25mm} \leq 2,
    \end{align*}
    where the interchange of expectations and integration in the first equality is justified by Tonelli's theorem. 
    
    Finally, we establish the no-drift property that $\E\{ M(t+s) \mid \F_t \} = M(t)$ for any $s>0$. After rewriting these expressions in the same manner as \citet{fleming1991counting}, we find that we must show that
    \begin{equation}
    \label{mart-condition}
     \E\{ N_C(t+s) - N_C(t) \mid \F_t \} 
     = \E \left\{ \int_{(t,t+s]} Y^{\dagger}(u) \,\d \Lambda_C^{\dagger}(u) \middle| \F_t \right\}.
    \end{equation}
    When $Y^{\sharp}(t+)=0$ almost surely, this follows immediately since both expressions vanish.
    
    Now assume $\E\{Y^{\sharp}(t+)\}>0$; the first term simplifies as 
    \begin{align*}
        \E\{ N_C(t+s) - N_C(t) \mid \F_t \} 
        & = \E \{ N_C(t+s) - N_C(t) \mid N_T(u), N_C(u),\, 0 \leq u \leq t \} \\
        & \hspace{-15mm} = Y^{\sharp}(t+) \E \{ N_C(t+s) - N_C(t) \mid Y^{\sharp}(t+)=1 \} \\
        & \hspace{-15mm} = Y^{\sharp}(t+) \frac{\E \{ N_C(t+s) - N_C(t) \}}{\E \{Y^{\sharp}(t+)\} }.
    \end{align*}
    Similarly, the second term simplifies as
    \begin{align}
        \E \left\{ \int_{(t,t+s]} Y^{\dagger}(u) \,\d \Lambda_C^{\dagger}(u) \middle| \F_t \right\} 
        =\, & Y^{\sharp}(t+) \E \left\{ \int_{(t,t+s]} Y^{\dagger}(u) \,\d \Lambda_C^{\dagger}(u) \middle| Y^{\sharp}(t+)=1 \right\} \nonumber \\
        & \hspace{-25mm} = Y^{\sharp}(t+) \int_{(t,t+s]} \E \left\{ Y^{\dagger}(u) \middle| Y^{\sharp}(t+)=1 \right\} \,\d \Lambda_C^{\dagger}(u) \nonumber \\
        & \hspace{-25mm} = Y^{\sharp}(t+) \int_{(t,t+s]} \frac{\E \{ Y^{\dagger}(u)\}}{\E \{ Y^{\sharp}(t+) \}} \,\d \Lambda_C^{\dagger}(u) \label{eq:mart-arg} \\
        & \hspace{-25mm} = \frac{Y^{\sharp}(t+)}{\E \{ Y^{\sharp}(t+) \}} \int_{(t,t+s]} \E \{ Y^{\dagger}(u)\} \,\d \Lambda_C^{\dagger}(u) \nonumber \\
        & \hspace{-25mm} = \frac{Y^{\sharp}(t+)}{\E \{ Y^{\sharp}(t+) \}} \int_{(t,t+s]} \,\d \E \{N_C(u)\}. \nonumber
    \end{align}
    The desired result \eqref{mart-condition} now follows since $\int_{(t,t+s]} \d \E \{N_C(u)\} = \E \{ N_C(t+s) - N_C(t) \}$.
\end{proof}

\begin{proof}[Proof of \cref{lem:mcd-properties}]
    The centering term may be expressed as 
    \begin{equation}
        \int_{(0,t]} Y^{\dagger}(u) \,\d \Lambda_C^{\dagger}(u)
        = \begin{cases}
              I(X>t) \,\Lambda_C^{\dagger}(t)+ I(X \leq t) \Lambda_C^{\dagger}(X) & \text{ when } \Delta = 0, \\
              I(X>t) \,\Lambda_C^{\dagger}(t) + I(X \leq t) \Lambda_C^{\dagger}(X-) & \text{ when } \Delta = 1. 
          \end{cases} \label{eq:centterm-rc}
    \end{equation}
    \cref{sec:supp:basic-addntl:props} shows that $\Lambda_C^{\dagger}$ is right-continuous; hence, in each case, the centering term is right-continuous. Since $N_C$ is right-continuous, it follows that
    $M_C^{\dagger}$ is also right-continuous. 
    
   We now show that the centering term is not $\F_t$-predictable by considering the decomposition
    \begin{align*}
        \int_{(0,t]} Y^{\dagger}(u) \,\d \Lambda_C^{\dagger}(u)
        & = \int_{(0,t]} Y^{\sharp}(u) \,\d \Lambda_C^{\dagger}(u) 
        - \int_{(0,t]} \delta N_T(u) \,\d \Lambda_C^{\dagger}(u) \\
        & \hspace{-15mm} = \int_{(0,t]} Y^{\sharp}(u) \,\d \Lambda_C^{\dagger}(u) 
        - \int_{(0,t)} \delta N_T(u) \,\d \Lambda_C^{\dagger}(u) 
        - \delta N_T(t) \delta \Lambda_C^{\dagger}(t) \\
        & \hspace{-15mm} = \int_{(0,t]} Y^{\sharp}(u) \,\d \Lambda_C^{\dagger}(u) 
        - I(\Delta=1, X<t) \left\{ \Lambda_C^{\dagger}(X) - \Lambda_C^{\dagger}(X-) \right\}
        - \delta N_T(t) \delta \Lambda_C^{\dagger}(t), 
    \end{align*}
    valid for all $t>0$. 
    The first summand is $\F_t$-predictable \citep[, e.g. Proposition 1.4.2]{fleming1991counting}. The second summand is also $\F_t$-predictable as it is adapted and left-continuous \citep[Lemma 1.4.1]{fleming1991counting}.
    Since the first and second summands are $\F_t$-predictable , the centering term is $\F_t$-predictable if and only if the final summand is $\F_t$-predictable. However, $\delta N_T(t) \delta \Lambda_C^{\dagger}(t)$ is clearly not $\F_t$-predictable since, for $t>0$, $\delta N_T(t) = N_T(t) - N_T(t-)$ depends on $N_T(t)$ which is not in $\F_{t-}$ \citep[Proposition 1.4.1, Exercise 1.6]{fleming1991counting}. 
\end{proof}

Before the proof of \cref{cor:marting-sharp-coerc}, we provide two lemmas. 

\begin{lem}
\label{lem:mod.eq.indist}
Let $U(t)$ and $V(t)$ be two right-continuous processes for $t \geq 0.$
Then, $U$ and $V$ are 
indistinguishable 
if and only if $U(t) = V(t)$ almost surely for each $t \geq 0.$
\end{lem}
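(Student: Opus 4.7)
The proof splits into two directions; the forward direction is trivial while the reverse direction is the one requiring argument.

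The plan for the $(\Rightarrow)$ direction is a one-line observation: if $U$ and $V$ are indistinguishable, then there exists a single null set $N$ off of which $U(t,\omega) = V(t,\omega)$ for every $t \geq 0;$ hence for any fixed $t \geq 0$, $\P(U(t) \neq V(t)) \leq \P(N) = 0$.

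For the $(\Leftarrow)$ direction, the plan is a standard separability argument leveraging right-continuity. First I would fix a countable dense set $D \subseteq [0,\infty)$, for concreteness $D = \mathbb{Q} \cap [0,\infty)$. By hypothesis, for every $q \in D$ there is a null set $N_q$ such that $U(q,\omega) = V(q,\omega)$ whenever $\omega \notin N_q$. Setting $N = \bigcup_{q \in D} N_q$, the countable subadditivity of $\P$ gives $\P(N) = 0$, and for every $\omega \notin N$ we have $U(q,\omega) = V(q,\omega)$ simultaneously for all $q \in D$.

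Next I would extend this agreement from $D$ to all of $[0,\infty)$ using right-continuity. Fix $\omega \notin N$ and $t \geq 0$. Choose a sequence $q_n \in D$ with $q_n \searrow t$ (possible since $D$ is dense). Then right-continuity of both paths yields
\begin{equation*}
    U(t,\omega) \,=\, \lim_{n \to \infty} U(q_n,\omega) \,=\, \lim_{n \to \infty} V(q_n,\omega) \,=\, V(t,\omega).
\end{equation*}
Thus $U(\cdot,\omega) = V(\cdot,\omega)$ uniformly in $t$ for every $\omega \notin N$, which is precisely the definition of indistinguishability.

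There is no real obstacle here; the only small care is making sure the exceptional null set is constructed once (as a countable union) before invoking right-continuity, rather than attempting to pass through uncountably many null sets one $t$ at a time. No additional results from earlier in the paper are needed.
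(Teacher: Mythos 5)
Your proposal is correct and follows essentially the same route as the paper's proof: agreement on the countable set of rationals via a single null set (countable additivity), then extension to all $t \geq 0$ by right-continuity of the sample paths. The only difference is that you spell out the approximating sequence $q_n \searrow t$ explicitly, which the paper leaves implicit.
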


\begin{proof}
The result is well-known in the literature. 
If $U$ and $V$ are 
indistinguishable,
$U(t) = V(t)$ are almost surely for each $t \geq 0.$ Now, suppose only
that 
$U(t) = V(t)$ almost surely for each $t \geq 0.$ Then,
by countable additivity, the set ${A=\{\omega\colon V(t,\omega)=U(t,\omega){\rm\ for\ all\ }t\in{\mathbb Q}_+\}}$ has probability one. By right-continuity of the sample paths, it follows that $U(t)$ and $V(t)$ necessarily have the same paths for all ${\omega\in A}.$
\end{proof}

\begin{lem}
\label{lem:Znon}
Let $Z$ be a nonnegative
random variable. Then,
$\P(Z = 0) = 1$ if and
only if $\E(Z) = 0.$
\end{lem}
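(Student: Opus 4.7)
The plan is to prove the two directions separately, using only standard measure-theoretic tools and the nonnegativity of $Z$.

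For the forward direction, assume $\P(Z = 0) = 1$. Then the integrand $Z$ vanishes outside a null set, so $\E(Z) = \int_{\{Z > 0\}} Z \,\d\P + \int_{\{Z = 0\}} Z \,\d\P = 0 + 0 = 0$, since the first integral is over a set of probability zero and the second has zero integrand.

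For the reverse direction, assume $\E(Z) = 0$. The idea is to use Markov's inequality on the events $\{Z \geq 1/n\}$ for $n \in \mathbb{N}$. Because $Z \geq 0$, Markov's inequality gives $\P(Z \geq 1/n) \leq n \, \E(Z) = 0$ for each $n$. Since $\{Z > 0\} = \bigcup_{n=1}^{\infty} \{Z \geq 1/n\}$ is a countable union and this union is monotonic, countable subadditivity (or monotone continuity) yields $\P(Z > 0) = 0$, and hence $\P(Z = 0) = 1$.

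I do not anticipate any real obstacle: both directions are textbook applications of basic integration theory and Markov's inequality. The only place where nonnegativity is essential is in the reverse direction, to ensure Markov's inequality applies and to guarantee that $\{Z \neq 0\} = \{Z > 0\}$.
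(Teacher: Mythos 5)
Your proof is correct: the forward direction is the standard fact that integrals over null sets vanish, and the reverse direction via Markov's inequality applied to the events $\{Z \geq 1/n\}$ together with continuity from below (or countable subadditivity) is the textbook argument. The paper itself states \cref{lem:Znon} without supplying a proof, treating it as a well-known measure-theoretic fact, so your write-up simply fills in exactly the standard argument the authors had in mind; there is nothing missing or divergent to flag.
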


\begin{proof}[Proof of \cref{cor:marting-sharp-coerc}]
    Recall that the stochastic processes $M_C^{\dagger}$ and $M_C^{\sharp}$ are indistinguishable if their difference vanishes uniformly in $t$,  that is, if 
    \begin{equation}
        \P \left\{ M_C^{\dagger}(t) - M_C^{\sharp}(t) = 0 \text{ for all } t>0 \right\}
        = 1. \label{eq:indist}
    \end{equation}

We begin by noting that
   \begin{align}
        & M_C^{\dagger}(t) - M_C^{\sharp}(t) 
        = \left\{ N_C(t) - \int_{(0,t]} Y^{\dagger}(u) \,\d \Lambda_C^{\dagger}(u) \right\} - \left\{ N_C(t) - \int_{(0,t]} Y^{\sharp}(u) \,\d \Lambda_C^{\sharp}(u) \right\} \nonumber \\
        & \hspace{15mm} = \int_{(0,t]} Y^{\sharp}(u) \,\d \Lambda_C^{\sharp}(u) - \int_{(0,t]} Y^{\dagger}(u) \,\d \Lambda_C^{\dagger}(u) \nonumber \\
        & \hspace{15mm} = \int_{(0,t]} Y^{\sharp}(u) \,\d \Lambda_C^{\sharp}(u) - \int_{(0,t]} Y^{\sharp}(u) \,\d \Lambda_C^{\dagger}(u) + \int_{(0,t]} \delta N_T(u) \,\d \Lambda_C^{\dagger}(u) \nonumber \\
        & \hspace{15mm} = \int_{(0,t]} Y^{\sharp}(u) \,\d \left\{ \Lambda_C^{\sharp}(u) -  \Lambda_C^{\dagger}(u) \right\} + \int_{(0,t]} \delta \Lambda_C^{\dagger}(u) \,\d  N_T(u). 
        \label{eq:m-diff-pre-new}
\end{align} 

In particular,
since the expression \eqref{eq:m-diff-pre-new}
is right-continuous
in $t,$ the proof follows
immediately from \cref{lem:mod.eq.indist}
provided that, for each $t>0$, the expression \eqref{eq:m-diff-pre-new} is identically
zero if and only
$\delta \E\{N_T(u)\}
    \delta \E\{N_C(u)\}=0$ for all $u \in (0,t]$.
By 
\cref{cor:lambda-sharp-coerc}, 
the first term on the
right-side is
zero for each $t$ if and only if $\delta \E\{N_T(u)\}
    \delta \E\{N_C(u)\}=0$ for all $u \in (0,t]$.
Since that the first term on the
right-side is nonnegative, it suffices
to focus on
the behavior of
the right-continuous,
nonnegative process
\[
Z(t) = \int_{(0,t]} \delta \Lambda_C^{\dagger}(u) \,\d  N_T(u) =
\sum_{0 < u \leq t} \frac{\delta \E \{ N_C(u) \}}{\E \{ Y^{\dagger}(u) \}} \,\delta N_T(u).
\]

Fix an arbitrary $t>0.$
By \cref{lem:Znon}, it suffices to
show that $\E\{Z(t)\} = 0$
if and only if
$\delta \E\{N_T(u)\}
    \delta \E\{N_C(u)\}=0$ for all $u \in (0,t]$.
However, since
\[
\E\{Z(t) \} = 
\sum_{0 < u \leq t} \frac{\delta \E \{ N_C(u) \}}{\E \{ Y^{\dagger}(u) \}} \,\delta \E \{N_T(u) \},
\]
arguments similar
to those in \cref{cor:lambda-sharp-coerc} show that
$\E\{Z(t) \} = 0$
if and only if
$\delta \E\{N_T(u)\}
    \delta \E\{N_C(u)\}=0$ for all $u \in (0,t]$,
or equivalently if and only if $\delta \Lambda_T^{\sharp}(u)
    \delta \Lambda_C^{\dagger}(u) = 0$ for all $u \in (0,t]$. 
\end{proof}

\section{Additional Material for \texorpdfstring{\cref{sec:martingale-trans}}{Section 3}}
\label{sec:supp:mt-addntl}

\begin{proof}[Proof of \cref{lem:half-pred}]
    Denote $\mathcal{P}$ as the $\F_t$-predictable sigma algebra. Since $\F_{t-} \subseteq \G_t$, the set of half-predictable rectangles contains the set of rectangles
    \begin{equation*}
        \{0\} \times A, \, A \in \F_0 
        \hspace{2mm} \text{ and } \hspace{2mm} 
        [a,b) \times A, \, 0 < a \leq b \leq \infty, A \in \F_{a-},
    \end{equation*}
    which generates $\mathcal{P}$ \citep[Cor. 6.3.3]{elliott1982stochastic}. Therefore the $\mathcal{P} \subseteq \H$, proving the first claim. 

    We now establish the second claim. Let $t>0$. For $B \in \H$, define $B_t = \{\omega : (t, \omega) \in B \}$. Using that $\G_t$ is a filtration, we may readily see that $\{ B \in \H \, : \, B_t \in \G_{t} \} = \H$. Thus for any set $B \in \H$, we have that $B_t \in \G_{t}$ so that $I_{B}(t, \cdot) = I_{B_t}$ is $\G_{t}$-measurable as its pullback is either $B_t$ or its complement. The half-predictable process $H$ may be expressed as
    \begin{equation*}
        H(t, \omega)
        = \lim_{m \to \infty} \sum_{j=1}^m k_{j,m} I_{B_{j,m}} (t, \omega). 
    \end{equation*}
    for a triangular array of constants $k_{j,m}$ and $B_{j,m} \in \H$. For any fixed $m$, the summation $\sum_{j=1}^m k_{j,m} I_{B_{j,m}} (t, \cdot)$ is $\G_t$-measurable as each summand is $\G_t$-measurable. Thus the pointwise limit $H(t, \cdot)$ is also $\G_t$-measurable. 
\end{proof}

\subsection{Proof of martingale transform}
\label{sec:supp:mt-addntl:proofs}

\begin{lem}
\label{lem:elem-mart}
    If $B$ is a half-predictable rectangle, then the process 
    \begin{equation*}
        \int_{(0,\cdot]} I_B(u) \,\d M_C^{\dagger}(u)
    \end{equation*}
    is an $\F_t$-martingale. 
\end{lem}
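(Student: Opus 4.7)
The plan is to verify, for each half-predictable rectangle $B$ described in \eqref{eq:half-pred-rect}, the three standard martingale criteria for the cadlag process $Z(t) := \int_{(0,t]} I_B(u)\, \d M_C^{\dagger}(u)$: $\F_t$-adaptation, integrability, and the no-drift identity. The degenerate rectangle $B = \{0\}\times A$ is handled immediately since $I_B(u,\omega)=0$ for every $u>0$, so $Z \equiv 0$ on $(0,\infty)$ and the three criteria hold trivially. So the substantive case is $B = [a,b)\times A$ with $0<a<b\leq \infty$ and $A \in \G_a$, for which I would write $Z(t) = I_A \cdot J(t)$ with
\[
J(t) = \begin{cases} 0, & t < a, \\ M_C^{\dagger}(t) - M_C^{\dagger}(a-), & a\leq t < b, \\ M_C^{\dagger}(b-) - M_C^{\dagger}(a-), & t \geq b. \end{cases}
\]

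Adaptation follows for $t<a$ because $Z(t)\equiv 0$, and for $t \geq a$ from the inclusions $A \in \G_a \subseteq \F_a \subseteq \F_t$ together with the cadlag, $\F_t$-adapted structure of $M_C^{\dagger}$. Integrability follows from $|Z(t)| \leq |N_C(t)| + 2A_C^{\dagger}(\infty)$ and the bound $\E\{A_C^{\dagger}(\infty)\} = \E\{N_C(\infty)\} \leq 1$ coming from \cref{prop:dagger-martingale}. For the no-drift condition $\E[Z(t+s)\mid \F_t] = Z(t)$, the cases $t+s < a$ (both sides zero) and $t \geq b$ (both sides equal to the $\F_t$-measurable quantity $I_A\{M_C^{\dagger}(b-)-M_C^{\dagger}(a-)\}$) are immediate. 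When $a \leq t$, $I_A$ is $\F_t$-measurable and may be pulled outside the conditional expectation; the remaining increment $\E[J(t+s)-J(t)\mid\F_t]$ is then zero by the martingale property of $M_C^{\dagger}$, with left limits such as $M_C^{\dagger}(b-)$ handled by the standard uniform-integrability passage (using the $L^1$ bound $|M_C^{\dagger}(r)|\leq 1+A_C^{\dagger}(b-)$ for $r<b$).

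The main obstacle is the remaining case $t<a\leq t+s$, where $Z(t)=0$ but $I_A$ is only $\G_a$-measurable, not in general $\F_t$-measurable, so it cannot be pulled out naively. The key is the chain of inclusions $\F_t \subseteq \F_{a-} \subseteq \G_a$ which holds for $t<a$ by the definition of $\G_a$; the tower property then yields
\[
\E[I_A J(t+s) \mid \F_t] = \E\bigl[\,I_A\, \E\{J(t+s)\mid \G_a\}\,\bigm|\, \F_t\bigr].
\]
I would then split $J(t+s) = \delta M_C^{\dagger}(a) + \{M_C^{\dagger}(c) - M_C^{\dagger}(a)\}$, where $c = t+s$ if $t+s < b$ and otherwise $c$ is treated as a left limit at $b$. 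The first summand has $\G_a$-conditional expectation zero by the heuristic identity $\E\{\delta M_C^{\dagger}(a)\mid\G_a\}=0$ cited in the paper and established earlier in \cref{sec:supp:mt-addntl:proofs}; the second summand has $\G_a$-conditional expectation zero by applying the tower property with $\G_a \subseteq \F_a$ and the martingale property of $M_C^{\dagger}$, with the same uniform-integrability step as above when $c = b-$. Consequently $\E[Z(t+s)\mid \F_t]=0 = Z(t)$, completing the verification.
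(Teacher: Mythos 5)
Your proof is correct and follows essentially the same route as the paper's: check adaptedness and integrability, then get the no-drift property by splitting the increment at the rectangle endpoints $a$ and $b$ and applying the tower property (with $\G_a$, equivalently $\F_{a\vee t}$) together with the martingale property of $M_C^{\dagger}$. If anything you are more explicit than the paper, whose chain of equalities tacitly drops the endpoint jump terms whose vanishing is exactly your appeal to $\E\{\delta M_C^{\dagger}(a)\mid\G_a\}=0$ and the uniform-integrability passage to $M_C^{\dagger}(b-)$.
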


\begin{proof}
    Define $L(t) := \int_{(0,t]} I_B(u) \,\d M_C^{\dagger}(u)$
    For $B=\{0\} \times A$ with $A \in \G_0$, $I_B(u)=0$ for all $u>0$ so $L(t) = 0$ is obviously an $\F_t$-martingale.

    Let $B = [a,b) \times A$ for $0 < a < b \leq \infty, A \in \G_a$. The process $L$ is clearly $\F_t$-adapted. Similarly, 
    \begin{equation*}
        \E \{ | L(t) | \}
        \leq \E \left\{ \int_{(0,t]} |\d M_C^{\dagger}(u)| \right\}
        \leq 2 
        < \infty. 
    \end{equation*}
    Finally, for $t,s>0$, the process $L$ has no drift since 
    \begin{align*}
        \E \left\{ \int_{(t,t+s]} I_B(u) \,\d M_C^{\dagger}(u) \middle| \F_t \right\}
        & = \E \left\{ I_A \int_{(t,t+s] \cap [a,b)} \,\d M_C^{\dagger}(u) \middle| \F_t \right\} \nonumber \\
        & \hspace{-30mm} = \E \left\{ I_A \int_{(t,t+s] \cap (a,b]} \,\d M_C^{\dagger}(u) \middle| \F_t \right\} 
         + \E \left\{ I_A I(t < a \leq t+s) \delta M_C^{\dagger}(a) \middle| \F_t \right\} \nonumber 
         - \E \left\{ I_A I(t < b \leq t+s) \delta M_C^{\dagger}(b) \middle| \F_t \right\} \nonumber \\
        & \hspace{-30mm} = \E \left\{ I_A \int_{(a \vee t, b \wedge (t+s)]} \,\d M_C^{\dagger}(u) \middle| \F_t \right\} \\
        & \hspace{-30mm} = \E \left[ I_A \E \left\{ \int_{(a \vee t, b \wedge (t+s)]} \,\d M_C^{\dagger}(u) \middle| \F_{a \vee t} \right\} \middle| \F_t \right] \\
        & \hspace{-30mm} = 0. 
    \end{align*}
    vanishes, where $I_A$ denotes the indicator of the set $A$. 
\end{proof}

We now prove the main result. 

\begin{proof}[Proof of \cref{thm:martingale-transform}]
    We proceed using the same proof strategy as Theorem 1.5.1 in \citet{fleming1991counting}. %

    Define $\mathcal{V}$ as the vector space of bounded and adapted $H$ such that $\int_{(0,\cdot]} H(u) \,\d M_C^{\dagger}(u)$ is an $\F_t$-martingale. Let $H_j \in \mathcal{V}$ for $j=1,\dots$ to be an increasing sequence such that $H := \sup_j H_n$ is almost surely bounded. We will show that $H \cdot M_C^{\dagger}$ is an $\F_t$-martingale; it clearly is absolutely integrable and adapted. For $t,s>0$, it is drift-free since
    \begin{align*}
        \E \left\{ \int_{(0,t+s]} H(u) \,\d M_C^{\dagger}(u) \middle| \F_t \right\}
        & = \E \left\{ \int_{(0,t+s]} \lim_{j\to\infty} H_j(u) \,\d M_C^{\dagger}(u) \middle| \F_t \right\} \\
        & \hspace{-10mm} = \lim_{j\to\infty} \E \left\{ \int_{(0,t+s]} H_j(u) \,\d M_C^{\dagger}(u) \middle| \F_t \right\} \\
        & \hspace{-10mm} = \lim_{j\to\infty} \E \left\{ \int_{(t,t+s]} H_j(u) \,\d M_C^{\dagger}(u) \middle| \F_t \right\} \\
        & \hspace{-10mm} = \E \left\{ \int_{(t,t+s]} H(u) \,\d M_C^{\dagger}(u) \middle| \F_t \right\}.
    \end{align*}
    Therefore $H \cdot M_C^{\dagger}$ is an $\F_t$-martingale and so $\mathcal{V}$ is closed under supremums over the increasing sequence $H_j$, $j=1, \dots$. 

    Since $\mathcal{V}$ also contains the constant functions and, by \cref{lem:elem-mart}, contains the indicators of the half-predictable rectangles, the result now follows from Lemma 1.5.2 in \citet{fleming1991counting}, a corollary of the monotone class theorem. 
\end{proof}

\subsection{Proof of martingale transform in discrete time}
\label{sec:supp:mt-addntl:proofs-discrete}

We now give a simplified statement and proof of \cref{thm:martingale-transform} that is valid in discrete time and avoids measure theoretic technicalities. 

\begin{thm}
\label{thm:martingale-transform-discrete}
 Assume $\E \{ N_C(\cdot) \}$ is a step-function. If $H$ is $\G_t$-adapted and bounded, then $H \boldsymbol{\cdot} M^{\dagger}_{C}$ is a $\F_t$-martingale.
\end{thm}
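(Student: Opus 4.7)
The plan is to exploit the discrete-time structure directly. Because $\E\{N_C(\cdot)\}$ is a step function, $A_C^{\dagger}$ jumps only on a countable set $\mathcal{U} \subset (0,\infty)$, and hence so does $M_C^{\dagger}$. Consequently the transform reduces to an absolutely convergent sum,
\[
    (H \boldsymbol{\cdot} M_C^{\dagger})(t) \;=\; \sum_{u \in \mathcal{U},\, u \leq t} H(u)\,\delta M_C^{\dagger}(u),
\]
and the entire argument reduces to a jump-by-jump verification.

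First I would dispatch the standard requirements. Integrability follows from the bound $\E\{|(H \boldsymbol{\cdot} M_C^{\dagger})(t)|\} \leq \|H\|_\infty \,\E\{N_C(t) + A_C^{\dagger}(t)\} \leq 2\|H\|_\infty$, which parallels the calculation already performed in the proof of \cref{prop:dagger-martingale}. For adaptedness, note that $\G_u \subseteq \F_u$ by the definition of $\G_t$, so $\G_t$-adaptedness of $H$ implies that $H(u)$ is $\F_u$-measurable; combined with the $\F_u$-measurability of $\delta M_C^{\dagger}(u)$, the finite partial sums are $\F_t$-measurable. Next, for $t, s > 0$, the no-drift property reduces to showing that for every $u > t$ with $u \in \mathcal{U}$,
\[
    \E\bigl\{H(u)\,\delta M_C^{\dagger}(u) \bigm| \F_t\bigr\} \;=\; 0,
\]
after which an interchange of sum and conditional expectation (justified by absolute integrability) delivers the conclusion. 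Since $\F_t \subseteq \F_{u-} \subseteq \G_u$ and $H(u)$ is $\G_u$-measurable, the tower property gives
\[
    \E\bigl\{H(u)\,\delta M_C^{\dagger}(u) \bigm| \F_t\bigr\}
    \;=\; \E\bigl[\,H(u)\, \E\bigl\{\delta M_C^{\dagger}(u) \bigm| \G_u\bigr\} \bigm| \F_t\bigr],
\]
so everything collapses to the identity $\E\{\delta M_C^{\dagger}(u) \mid \G_u\} = 0$.

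The main obstacle is this last identity, which is precisely the discrete-time representation \eqref{eq:doob-dagger}. The plan is to compute $\E\{\delta N_C(u) \mid \G_u\}$ case by case, comparing with $\delta A_C^{\dagger}(u) = Y^{\dagger}(u)\,\delta \Lambda_C^{\dagger}(u)$. On the $\G_u$-measurable event $\{Y^{\dagger}(u) = 0\}$ we have $\delta N_C(u) = 0$, so both sides vanish. On $\{Y^{\dagger}(u) = 1\}$, all the generators of $\G_u$ (the past increments of $N_T$, $N_C$, and $N_T(u)$ itself) are identically zero, so $\{Y^{\dagger}(u) = 1\}$ is a $\G_u$-atom; since $\{\delta N_C(u) = 1\} \subseteq \{Y^{\dagger}(u) = 1\}$, the conditional probability there is constant and equals
\[
    \frac{\P\{\delta N_C(u) = 1\}}{\P\{Y^{\dagger}(u) = 1\}} \;=\; \delta \Lambda_C^{\dagger}(u) \;=\; Y^{\dagger}(u)\,\delta \Lambda_C^{\dagger}(u),
\]
which matches $\delta A_C^{\dagger}(u)$ exactly. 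Combining the two cases gives $\delta A_C^{\dagger}(u) = \E\{\delta N_C(u) \mid \G_u\}$, and since $\delta A_C^{\dagger}(u)$ is $\G_u$-measurable we conclude $\E\{\delta M_C^{\dagger}(u) \mid \G_u\} = 0$, completing the proof.
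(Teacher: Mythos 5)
Your proposal is correct and follows essentially the same route as the paper's proof: reduce the transform to a sum over the jump set, verify integrability and adaptedness, and establish the no-drift property via the tower property through $\G_u$ together with the key identity $\E\{\delta N_C(u) \mid \G_u\} = Y^{\dagger}(u)\,\delta \Lambda_C^{\dagger}(u)$. Your case-by-case atom argument on $\{Y^{\dagger}(u)=1\}$ is just a more explicit rendering of the paper's step $\E\{\delta N_C(u)\mid \G_u\} = Y^{\dagger}(u)\,\E\{\delta N_C(u)\mid Y^{\dagger}(u)=1\} = Y^{\dagger}(u)\,\E\{\delta N_C(u)\}/\E\{Y^{\dagger}(u)\}$, so no substantive difference remains.
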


\begin{proof}
    Assume that censoring is discrete so that the identified cumulative hazard for censoring is
    \begin{equation*}
        \Lambda_C^{\dagger}(t)
        = \sum_{0 < u \leq t} \delta \Lambda_C^{\dagger}(u). 
    \end{equation*}
    Let $D$ be the set of discontinuities. Then $N_C$ jumps on $D$ almost surely. 
    
    Since $\G_t \subseteq \F_t$, the transform $H \boldsymbol{\cdot} M^{\dagger}_{C} = \int_{(0,\cdot]} H(u) \,\d M_C^{\dagger}(u)$
    is clearly adapted. 
    Let $t>0$ and suppose that $H$ is bounded by $C<\infty$. The integrability follows from the usual argument that
    \begin{align*}
        & \E \left\{ \left| \int_{(0,t]} H(u) \,\d M_C^{\dagger}(u) \right| \right\}
        = \E \left\{ \left| \sum_{(0,t] \cap D} H(u) \,\delta M_C^{\dagger}(u) \right| \right\} \\
        & \hspace{15mm} \leq \sum_{(0,t] \cap D} \E \left\{ \left| H(u) \,\delta M_C^{\dagger}(u) \right| \right\} \\
        & \hspace{15mm} \leq C \sum_{(0,t] \cap D} \biggl[ \E \left\{ \,\delta N_C(u) \right\} + \E \left\{ Y^{\dagger}(u) \frac{\delta \E \{ N_C(u)\} }{\E \{Y^{\dagger}(u) \}} \right\} \biggr] \\
        & \hspace{15mm} \leq 2C.
    \end{align*}

    Let $s>0$. 
    The conditional expectation may be evaluated as
    \begin{align*}
        & \E \left\{ \int_{(t,t+s]} H(u) \, \d M_C^{\dagger}(u) \middle| \F_t \right\} 
        = \E \left[ \sum_{(t,t+s] \cap D} H(u) \,\delta M_C^{\dagger}(u) \middle| \F_t \right] \\
        & \hspace{15mm} = \E \left[ \sum_{(t,t+s] \cap D} \E \biggl\{ H(u) \,\delta M_C^{\dagger}(u) \bigg| \G_u \biggr\} \middle| \F_t \right] \\
        & \hspace{15mm} = \E \left[ \sum_{(t,t+s] \cap D} H(u) \, \E \left\{ \delta M_C^{\dagger}(u) \middle| \G_u \right\} \middle| \F_t \right] \\
        & \hspace{15mm} = 0,
    \end{align*}
    The first equality uses that $\F_t \subseteq \G_u$ for $u > t$, the second uses the half-predictability of $H$. The third identity uses that
    \begin{align*}
        \E \left\{ \delta N_C(u) \middle| \G_u \right\}
        = Y^{\dagger}(u) \E \left\{ \delta N_C(u) \middle| Y^{\dagger}(u) = 1 \right\} 
        = Y^{\dagger}(u) \frac{\E \left\{ \delta N_C(u) \right\}}{\E \{ Y^{\dagger}(u) \}} 
        = Y^{\dagger}(u) \delta \Lambda_C^{\dagger}(u). 
    \end{align*}
    Therefore the conditional expectation vanishes and hence the transform $H \boldsymbol{\cdot} M^{\dagger}_{C}$ is drift-free. 

    Since the three conditions defining a martingale are verified, we may conclude that the transform is a martingale. %
\end{proof}

\subsection{Proof of variation and covariation processes}
\label{sec:supp:mt-addntl:var-and-covar}

We start with a series of technical lemmas. 

\begin{lem}
\label{lem:stp0}
    $\int_{(0,\cdot]} M_C^{\dagger}(u-) \,\d M_C^{\dagger}(u)$ is a local $\F_t$-martingale. 
\end{lem}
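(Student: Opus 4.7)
The aim is to realize
$\int_{(0,\cdot]} M_C^{\dagger}(u-) \,\d M_C^{\dagger}(u)$
as a martingale transform of the kind covered by \cref{thm:martingale-transform}, combined with a localization step to handle the fact that the integrand $M_C^{\dagger}(u-)$ need not be uniformly bounded.

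The first step is to verify that the integrand is half-predictable. By \cref{lem:mcd-properties} the martingale $M_C^{\dagger}$ is right-continuous and $\F_t$-adapted; consequently $u \mapsto M_C^{\dagger}(u-)$ is left-continuous in $u$ and $\F_{u-}$-measurable, which makes it $\F_t$-predictable by the standard characterisation of predictability for left-continuous adapted processes. The proof of \cref{lem:half-pred} establishes that the predictable $\sigma$-algebra $\mathcal{P}$ is contained in $\H$ (the rectangles generating $\mathcal{P}$ are half-predictable rectangles since $\F_{a-} \subseteq \G_a$), so every $\F_t$-predictable process is half-predictable; in particular $M_C^{\dagger}(u-)$ is half-predictable.

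The second step is to localize in order to gain boundedness. Using $N_C \leq 1$ and $Y^{\dagger} \leq 1$ in the definition of $M_C^{\dagger}$ gives the crude estimate $|M_C^{\dagger}(u-)| \leq 1 + \Lambda_C^{\dagger}(u-)$. Since $\Lambda_C^{\dagger}$ is a deterministic, nondecreasing, right-continuous function, I would introduce the deterministic $\F_t$-stopping times
\[ T_n := \inf\{t > 0 : \Lambda_C^{\dagger}(t) \geq n\} \wedge n, \]
which increase to the natural time horizon. On $\{u \leq T_n\}$ the integrand is bounded by $1+n$, and the truncated process $M_C^{\dagger}(u-) I(u \leq T_n)$ remains half-predictable, because the indicator is left-continuous and $\F_t$-adapted (hence $\F_t$-predictable and so half-predictable by the first step). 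Applying \cref{thm:martingale-transform} to this bounded, half-predictable integrand yields that
\[
\int_{(0, t \wedge T_n]} M_C^{\dagger}(u-) \,\d M_C^{\dagger}(u) \, = \, \int_{(0,t]} M_C^{\dagger}(u-) I(u \leq T_n) \,\d M_C^{\dagger}(u)
\]
is an $\F_t$-martingale for every $n$. This exhibits $(T_n)$ as the required localizing sequence and delivers the claim.

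The main obstacle is precisely this localization: \cref{thm:martingale-transform} requires a bounded integrand, and without uniform control on $\Lambda_C^{\dagger}$ near the horizon $M_C^{\dagger}(u-)$ can fail to be bounded, so one must truncate before invoking the theorem. Using deterministic $T_n$ avoids any subtleties about half-predictability of the truncated integrand. All other ingredients — the cadlag structure of $M_C^{\dagger}$, the left-continuity of its left-limits, and the inclusion $\mathcal{P} \subseteq \H$ extracted from the proof of \cref{lem:half-pred} — are already available from earlier in the paper.
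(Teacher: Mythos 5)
Your overall route---observe that $u\mapsto M_C^{\dagger}(u-)$ is left-continuous and adapted, hence $\F_t$-predictable, hence half-predictable via the inclusion $\mathcal{P}\subseteq\H$ extracted from the proof of \cref{lem:half-pred}, then truncate and invoke \cref{thm:martingale-transform}---is sound and genuinely different from the paper's proof, which simply cites Corollary 2.4.1 of \citet{fleming1991counting} after noting that its proof never uses predictability of the centering term. The gap is in your localization. The deterministic times $T_n=\inf\{t>0:\Lambda_C^{\dagger}(t)\ge n\}\wedge n$ need not increase to infinity: whenever $\Lambda_C^{\dagger}$ diverges at a finite time $\tau$ (a typical situation, e.g.\ independent censoring with a continuous distribution on a bounded support, where $\Lambda_C^{\dagger}=\Lambda_C$ on the identified region by \cref{prop:identif} and $\Lambda_C(t)\to\infty$ as $t\uparrow\tau$), you get $T_n\uparrow\tau<\infty$. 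A sequence of stopping times accumulating at a finite time is not a localizing sequence, so showing that each stopped transform is a martingale does not deliver the local martingale property; what your argument leaves untouched is precisely the behavior of the transform near and beyond $\tau$, which is where the genuine difficulty sits (compare \cref{lem:ll-mart2} and \cref{lem:exp}, where the unboundedness of $\Lambda_C^{\dagger}$ at the horizon is controlled by a moment bound rather than waved away).

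The repair is easy but requires random stopping times, and they cause none of the trouble you were trying to avoid. Take $S_n:=\inf\{t>0: A_C^{\dagger}(t)\ge n\}$. Since $A_C^{\dagger}$ is adapted, increasing and right-continuous, $\{S_n\le t\}=\{A_C^{\dagger}(t)\ge n\}\in\F_t$, so $S_n$ is an $\F_t$-stopping time; and since $\E\{A_C^{\dagger}(t)\}=\E\{N_C(t)\}\le 1$ for every $t$ (the integrability computation in the proof of \cref{prop:dagger-martingale}), monotone convergence gives $A_C^{\dagger}(\infty)<\infty$ almost surely, hence $S_n=\infty$ eventually and $S_n\uparrow\infty$ a.s. The indicator $u\mapsto I(u\le S_n)$ is left-continuous and adapted, hence $\F_t$-predictable and therefore half-predictable by the same inclusion you already use; moreover for $u\le S_n$ one has $|M_C^{\dagger}(u-)|\le 1+A_C^{\dagger}(u-)\le 1+n$, so the random centering term $A_C^{\dagger}(u-)$, not the deterministic $\Lambda_C^{\dagger}(u-)$, is the natural quantity to truncate on. With $H(u)=M_C^{\dagger}(u-)I(u\le S_n)$, \cref{thm:martingale-transform} shows that the transform stopped at $S_n$ is an $\F_t$-martingale for each $n$, which is exactly the claimed local martingale property. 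With this modification your proof is correct and more self-contained than the paper's citation-based argument.
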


\begin{proof}
    The process is a local $\F_t$-martingale by Corollary 2.4.1 in \citet{fleming1991counting} since no special properties of $A_C^{\dagger}$ are used in the proof; see also Lemma 2.4.1 and Theorem 2.4.1 in the same book. 
\end{proof}

\begin{lem}
\label{lem:stp1}
    The process 
    \begin{equation*}
        \left( M_C^{\dagger} \right)^2 - \int_{(0,\cdot]} \frac{G^{\dagger}(u)}{G^{\dagger}(u-)} Y^{\dagger}(u) \,\d \Lambda_C^{\dagger}(u)
    \end{equation*}
    is a local $\F_t$-martingale. 
\end{lem}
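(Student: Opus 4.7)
The plan is to apply the pure-jump integration-by-parts (product) formula to $M_C^\dagger$ and reduce the claim to showing that a residual process is a local martingale. Since $M_C^\dagger$ is right-continuous of finite variation and vanishes at zero,
\begin{equation*}
\bigl(M_C^\dagger(t)\bigr)^2 \;=\; 2 \int_{(0,t]} M_C^\dagger(u-)\,\d M_C^\dagger(u) + \sum_{0 < u \leq t} \bigl\{\delta M_C^\dagger(u)\bigr\}^2.
\end{equation*}
By \cref{lem:stp0}, the first term on the right is a local $\F_t$-martingale, so it suffices to show that the sum of squared jumps, minus the proposed centering term $C(t) := \int_{(0,t]} \{1-\delta\Lambda_C^\dagger(u)\}\,Y^\dagger(u)\,\d\Lambda_C^\dagger(u)$ (where I have used $G^\dagger(u)/G^\dagger(u-) = 1-\delta\Lambda_C^\dagger(u)$), is a local martingale.

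The second step would expand each squared jump. Writing $\delta M_C^\dagger(u) = \delta N_C(u) - Y^\dagger(u)\,\delta\Lambda_C^\dagger(u)$ and using both $\{\delta N_C(u)\}^2 = \delta N_C(u)$ (counting-process jumps are $\{0,1\}$-valued) and $\delta N_C(u)\,Y^\dagger(u) = \delta N_C(u)$ (at any jump time of $N_C$ one has $N_C(u-)=0$ and necessarily $N_T(u)=0$, so $Y^\dagger(u)=1$), one obtains
\begin{equation*}
\bigl\{\delta M_C^\dagger(u)\bigr\}^2 \;=\; \delta N_C(u) - 2\,\delta N_C(u)\,\delta\Lambda_C^\dagger(u) + Y^\dagger(u)\bigl\{\delta\Lambda_C^\dagger(u)\bigr\}^2.
\end{equation*}
Summing over $u \in (0,t]$, substituting $N_C = M_C^\dagger + A_C^\dagger$, using $\int \delta\Lambda_C^\dagger\,\d N_C = \int \delta\Lambda_C^\dagger\,\d M_C^\dagger + \int \delta\Lambda_C^\dagger\,Y^\dagger\,\d\Lambda_C^\dagger$, and writing $\sum Y^\dagger(\delta\Lambda_C^\dagger)^2 = \int Y^\dagger \delta\Lambda_C^\dagger\,\d\Lambda_C^\dagger$ (both pick up only atoms), the atomic terms combine with $A_C^\dagger$ to reproduce the $(1-\delta\Lambda_C^\dagger)$ factor in $C(t)$, leaving
\begin{equation*}
\sum_{0<u\leq t}\bigl\{\delta M_C^\dagger(u)\bigr\}^2 - C(t) \;=\; M_C^\dagger(t) - 2\int_{(0,t]}\delta\Lambda_C^\dagger(u)\,\d M_C^\dagger(u).
\end{equation*}
Both terms on the right are $\F_t$-martingales: the first by \cref{prop:dagger-martingale}, and the second by \cref{thm:martingale-transform} applied to the bounded deterministic (hence half-predictable) integrand $u \mapsto \delta\Lambda_C^\dagger(u)$, which lies in $[0,1]$ since $\delta\Lambda_C^\dagger(u) = \P(C=u,\,T>u)/\P(T>u,\,C\geq u) \leq 1$.

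The main obstacle is the atomic bookkeeping in the second step: verifying that the factor $G^\dagger(u)/G^\dagger(u-) = 1-\delta\Lambda_C^\dagger(u)$ built into the proposed centering is precisely the correction needed to absorb the $\sum Y^\dagger(\delta\Lambda_C^\dagger)^2$ surplus produced by the quadratic-variation expansion. This atom correction is the signature of a half-predictable (rather than fully predictable) variation process and is what distinguishes this result from the classical Doob-Meyer angle-bracket for $M_C^\sharp$, which would carry an analogous $1-\delta\Lambda_C^\sharp$ factor instead.
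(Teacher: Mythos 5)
Your proof is correct and follows essentially the same route as the paper: your integration-by-parts plus jump-bookkeeping derivation reproduces exactly the algebraic identity the paper imports from the proof of Theorem 2.6.1 in \citet{fleming1991counting} (note $\{1-\delta A_C^{\dagger}(u)\}\,\d A_C^{\dagger}(u) = Y^{\dagger}(u)\{1-\delta\Lambda_C^{\dagger}(u)\}\,\d\Lambda_C^{\dagger}(u)$ since $Y^{\dagger}\in\{0,1\}$), and your concluding appeals to \cref{lem:stp0}, \cref{prop:dagger-martingale}, and \cref{thm:martingale-transform} match the paper's. The only difference is that you spell out the identity rather than cite it, which is fine.
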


\begin{proof}
    The algebraic identity
    \begin{equation}
        \left( M_C^{\dagger}(t) \right)^2 - \int_{(0,t]} \{ 1 - \delta A_C^{\dagger}(u) \} \,\d A_C^{\dagger}(u)
        = 2 \int_{(0,t]} M_C^{\dagger}(u-) \,\d M_C^{\dagger}(u) + M_C^{\dagger}(t) - 2 \int_{(0,t]} \delta A_C^{\dagger}(u) \,\d M_C^{\dagger}(u) \label{eq:algm}
    \end{equation}
    holds for all $t>0$, as shown in the proof of Theorem 2.6.1 in \citet{fleming1991counting}. The middle and last summand on the RHS of \cref{eq:algm} are both an $\F_t$-martingale by \cref{thm:martingale-transform}; the first summand on the RHS of \cref{eq:algm} is a local $\F_t$-martingale by \cref{lem:stp0}. 
\end{proof}

Let $\mathrm{Exp}(1)$ denote a random variable with an exponential distribution with unit rate. 

\begin{lem}
\label{lem:exp}
    Let $X>0$ be a random variable cumulative hazard $\Lambda$. %
    Then $\Lambda(X)$ is stochastically dominated by $\mathrm{Exp}(1)$, i.e. $\P \{ \mathrm{Exp}(1) \leq u \} \leq \P \{ \Lambda(X) \leq u \}$ for all $u>0$. 
\end{lem}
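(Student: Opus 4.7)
The plan centers on the product-integral representation $\P(X > t) = \prodi_{(0,t]}\{1 - \d \Lambda(s)\}$ together with the elementary inequality $1 - x \leq e^{-x}$ valid for $x \in [0,1]$. Applying the latter to each jump of $\Lambda$ (and noting it is trivially an equality for the continuous part) yields the universal survival bound $\P(X > t) \leq e^{-\Lambda(t)}$ for every $t \geq 0$; passing to left limits gives $\P(X \geq t) \leq e^{-\Lambda(t-)}$. This is the only quantitative ingredient of the argument.

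Fix $u > 0$. I would introduce the deterministic hitting time $\tau_u := \inf\{t \geq 0 : \Lambda(t) > u\}$, with the convention that $\tau_u = \infty$ means $\Lambda$ never exceeds $u$; in that case $\Lambda(X) \leq u$ surely and the claim is immediate. Otherwise, right-continuity and monotonicity of $\Lambda$ force $\Lambda(s) \leq u$ for every $s < \tau_u$ and $\Lambda(\tau_u) \geq u$.

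The core step is a set inclusion followed by the survival bound. From the characterization of $\tau_u$ one has $\{\Lambda(X) > u\} \subseteq \{X \geq \tau_u\}$, whence
\[
\P\{\Lambda(X) > u\} \;\leq\; \P(X \geq \tau_u) \;\leq\; e^{-\Lambda(\tau_u-)}.
\]
Together with a verification that $\Lambda(\tau_u-) \geq u$ in the relevant regime, this gives $\P\{\Lambda(X) > u\} \leq e^{-u}$, which rearranges to $\P\{\mathrm{Exp}(1) \leq u\} \leq \P\{\Lambda(X) \leq u\}$ upon taking complements.

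The main obstacle, and the only delicate part of the argument, is the endpoint behavior at $\tau_u$ when $\Lambda$ has a jump straddling the level $u$: then $\Lambda(\tau_u-) < u \leq \Lambda(\tau_u)$ and the final reduction $e^{-\Lambda(\tau_u-)} \leq e^{-u}$ has to be justified carefully. The cleanest way to handle this is to work with the left limit by replacing $\{\Lambda(X) > u\} \subseteq \{X \geq \tau_u\}$ with the sharper $\{\Lambda(X-) > u\} \subseteq \{X > \tau_u\}$, after which $\P(X > \tau_u) \leq e^{-\Lambda(\tau_u)} \leq e^{-u}$ goes through with no boundary concerns; a small case split on whether $\Lambda$ is continuous at $\tau_u$ then transfers the bound back to $\Lambda(X)$ in the form needed for the downstream applications.
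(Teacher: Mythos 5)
Your quantitative ingredient is the same one the paper uses: the product-integral representation of the survival function together with $(1-\lambda)e^{\lambda}\leq 1$, giving $\P(X>t)\leq e^{-\Lambda(t)}$ and $\P(X\geq t)\leq e^{-\Lambda(t-)}$, and your hitting-time reduction matches the paper's use of the generalized inverse $x_0=\inf\{x:\Lambda(x)=u\}$ whenever the level $u$ is attained by $\Lambda$. The gap is exactly at the step you flag as delicate, and it cannot be closed. When $\Lambda$ jumps strictly across $u$ at $\tau_u$, the event $\{\Lambda(X)>u\}$ differs from $\{\Lambda(X-)>u\}$ by $\{X=\tau_u\}$, whose probability is not controlled by $e^{-u}$; so bounding $\P\{\Lambda(X-)>u\}\leq\P(X>\tau_u)\leq e^{-\Lambda(\tau_u)}\leq e^{-u}$ does not ``transfer back'', and no case split can make it transfer. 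Concretely, if $X\equiv x_1$ is deterministic, then $\Lambda$ has a single jump of size one at $x_1$, $\Lambda(X)=1$ almost surely, and for $u=1/2$ one has $\P\{\Lambda(X)\leq u\}=0<1-e^{-1/2}=\P\{\mathrm{Exp}(1)\leq u\}$. Thus the inequality asserted in \cref{lem:exp} for all $u>0$ genuinely fails at levels lying strictly inside a jump of $\Lambda$ located at an atom of $X$. What your argument does prove (and what the paper's first case proves by the same survival bound applied at $x_0$; its second case, for levels not attained by $\Lambda$, is contradicted by the same example) is the inequality for every $u$ in the closure of the range of $\Lambda$: if $u=\Lambda(x_0)$ with $x_0=\inf\{x:\Lambda(x)=u\}$, then $\P\{\Lambda(X)\leq u\}\geq\P(X\leq x_0)\geq 1-e^{-\Lambda(x_0)}=1-e^{-u}$, with no boundary issue.

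For the purpose this lemma serves in \cref{lem:ll-mart2}, your left-limit bound already suffices after a one-line modification in place of the impossible transfer: every jump of a cumulative hazard satisfies $\delta\Lambda(t)=\P(X=t)/\P(X\geq t)\leq 1$, so $\Lambda(X)\leq\Lambda(X-)+1$, while your inclusion $\{\Lambda(X-)>u\}\subseteq\{X>\tau_u\}$ gives $\P\{\Lambda(X-)>u\}\leq e^{-u}$ for all $u>0$. Hence $\Lambda(X)$ is stochastically dominated by $1+\mathrm{Exp}(1)$, and $\E\{\Lambda(X)^3\}\leq\E[\{1+\mathrm{Exp}(1)\}^3]<\infty$, which is all the finiteness argument downstream requires. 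As a proof of the statement as written, however, the final step of your proposal is a genuine gap—indeed the statement itself needs either the restriction of $u$ to (the closure of) the range of $\Lambda$ or a weakened dominating variable such as $1+\mathrm{Exp}(1)$.
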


\begin{proof}
    Define $A = \Lambda (X)$ and $B \sim \lfloor \mathrm{Exp}(1) \rfloor_{\mathcal{S}}$; our goal is to show that $\P(B \leq u) < \P(A \leq u)$ for all $u>0$. 
    
    First, consider $a_0 \in \mathcal{S}$; let $x_0 = \inf \{ x \, : \, \Lambda(x) = a_0 \}$. By right-continuity of $\Lambda$, we know $\Lambda(x_0) = a_0$. 
    Also, 
    \begin{align*}
        \P (A \leq a_0)
        & = \P \{ x \, : \, \Lambda(x) \leq \Lambda(x_0) \} \\
        & \hspace{-10mm} = \P (x \, : x \leq x_0) + \P \{ x \, : \, x > x_0, \Lambda(x) = \Lambda(x_0) \} \\
        & \hspace{-10mm} \geq \P (x \, : x \leq x_0) \\
        & \hspace{-10mm} = 1 - \Prodi_{(0,x_0]} \left\{ 1 - \d \Lambda(x) \right\}
    \end{align*}
    since $\Lambda$ is monotonically non-decreasing. The remaining term involves the product integral which satisfies 
    \begin{equation*}
        \Prodi_{(0,x_0]} \left\{ 1 - \d \Lambda(x) \right\} 
        = \exp \left\{ - \Lambda(x_0) \right\} \prod_{0 < x \leq x_0} \left[ \left\{ 1 + \delta \Lambda(x) \right\} \exp \left\{ - \delta \Lambda(x) \right\} \right]
        \leq \exp \left\{ - \Lambda(x_0) \right\},
    \end{equation*}
    where the equality follows from Definition 4 in \citet{gill1990survey} and the inequality follows from $(1+\lambda)e^{-\lambda} \leq 1$ for all $\lambda \in [0,1]$. 
    Since $\Lambda(x_0) = a_0$ by construction, we have shown that $\P \{ \mathrm{Exp}(1) \leq a_0 \} \leq \P \{ \Lambda(X) \leq a_0 \}$ for all $a_0 \in \mathcal{S}$. 

    Now, consider $a_0 \not\in \mathcal{S}$. Put $a_{+} = \inf \{ a \, : \, a \geq a_0, a \in \mathcal{S} \}$. 
    Since $\Lambda$ is right continuous, $a_{+} \in \mathcal{S}$. Thus the previous argument, applied to ``$<$'' rather than ``$\leq$'', shows that $\P ( A < a_{+} ) \geq 1-\exp \left\{ - a_{+} \right\}$. The result follows since $\P ( A \leq a_0 ) = \P ( A < a_{+} )$ by the monotonicity of $\Lambda$. 
\end{proof}

\begin{lem}
\label{lem:ll-mart2}
    $\int_{(0,\cdot]} M_C^{\dagger}(u-) \,\d M_C^{\dagger}(u)$ is an $\F_t$-martingale. 
\end{lem}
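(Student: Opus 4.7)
The plan is to upgrade the local-martingale property from \cref{lem:stp0} to a true martingale by exhibiting a time-uniform integrable dominator for the process $L(t) := \int_{(0,t]} M_C^{\dagger}(u-) \, dM_C^{\dagger}(u)$, so that dominated convergence along any localizing sequence passes to the limit. The dominator will be built from $\Lambda_C^{\dagger}(X)$, whose second moment I will control via a hazard-comparison argument that reduces things to \cref{lem:exp}.

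First, bound $M_C^{\dagger}$ and its total variation uniformly. Since $M_C^{\dagger}(u) = N_C(u) - A_C^{\dagger}(u)$ with $N_C(u) \in \{0,1\}$ and, by the case decomposition in the proof of \cref{lem:mcd-properties}, $A_C^{\dagger}(u) \leq A_C^{\dagger}(\infty) \leq \Lambda_C^{\dagger}(X)$, we have $\sup_u |M_C^{\dagger}(u)| \leq 1 + \Lambda_C^{\dagger}(X)$. Similarly, $N_C(\infty) + A_C^{\dagger}(\infty) \leq 1 + \Lambda_C^{\dagger}(X)$, and since $dM_C^{\dagger} = dN_C - dA_C^{\dagger}$ with both summands nonnegative measures,
\[
\sup_{t \geq 0} |L(t)| \leq \sup_u |M_C^{\dagger}(u-)| \cdot \bigl\{ N_C(\infty) + A_C^{\dagger}(\infty) \bigr\} \leq \bigl(1 + \Lambda_C^{\dagger}(X)\bigr)^2.
\]

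Next, control $\E[\Lambda_C^{\dagger}(X)^2]$ via \cref{lem:exp}. The identity $d\Lambda_C^{\sharp}(u) = \{1 - \delta\Lambda_T^{\sharp}(u)\}\,d\Lambda_C^{\dagger}(u)$ derived in \cref{sec:basic}, combined with $\delta\Lambda_C^{\dagger}(u) \leq 1$, yields
\[
\Lambda_C^{\dagger}(t) - \Lambda_C^{\sharp}(t) = \sum_{0 < u \leq t} \delta\Lambda_T^{\sharp}(u) \,\delta\Lambda_C^{\dagger}(u) \leq \Lambda_T^{\sharp}(t),
\]
so $\Lambda_C^{\dagger}(X) \leq \Lambda_T^{\sharp}(X) + \Lambda_C^{\sharp}(X) = \Lambda_X(X)$, where $\Lambda_X = \Lambda_T^{\sharp} + \Lambda_C^{\sharp}$ is the cumulative hazard of $X$. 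Applying \cref{lem:exp} with $\Lambda = \Lambda_X$ gives that $\Lambda_X(X)$ is stochastically dominated by $\mathrm{Exp}(1)$, hence $\E[\Lambda_C^{\dagger}(X)^2] \leq \E[\Lambda_X(X)^2] \leq 2$ and $\E[(1+\Lambda_C^{\dagger}(X))^2] < \infty$.

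To conclude, let $\tau_n \uparrow \infty$ be any localizing sequence guaranteed by \cref{lem:stp0}. The uniform bound $|L(t \wedge \tau_n)| \leq (1 + \Lambda_C^{\dagger}(X))^2$ permits dominated convergence in $\E[L(t \wedge \tau_n) \mid \F_s] = L(s \wedge \tau_n)$ as $n \to \infty$, yielding $\E[L(t) \mid \F_s] = L(s)$ for $s \leq t$. The main obstacle is the hazard comparison $\Lambda_C^{\dagger}(X) \leq \Lambda_X(X)$: it is the single step where the fine relationship between the two centering terms enters, and it is precisely what allows the exponential-domination bound of \cref{lem:exp} — a statement about the cumulative hazard of $X$ — to control the \emph{a priori} unrelated quantity $\Lambda_C^{\dagger}(X)$.
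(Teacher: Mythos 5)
Your proof is correct, but it follows a genuinely different route from the paper's. The paper upgrades \cref{lem:stp0} to a true martingale via the square-integrability criterion of \citet[Theorem 2.4.4]{fleming1991counting}, bounding $\E \int (M_C^{\dagger}(u-))^2 \,\d \langle M_C^{\dagger}\rangle(u)$ by $\E[\{\Lambda_C^{\dagger}(X)\}^3]$ using the variation process from \cref{lem:stp1}, and then controls that third moment by invoking \cref{prop:identif} and \cref{lem:exp} under $T \ind C$ (so that $\Lambda_C^{\dagger}(X)=\Lambda_C(X)\leq \Lambda_C(C)$), with an appeal to Peterson-type bounds to cover dependent censoring. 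You instead (i) dominate $\sup_t |L(t)|$ pathwise by $(1+\Lambda_C^{\dagger}(X))^2$ using the explicit form of $A_C^{\dagger}$ from the proof of \cref{lem:mcd-properties} and the total-variation bound $\d N_C + \d A_C^{\dagger}$, and pass from local to true martingale by conditional dominated convergence along a localizing sequence; and (ii) control the needed second moment through the purely algebraic comparison $\Lambda_C^{\dagger} \leq \Lambda_T^{\sharp}+\Lambda_C^{\sharp}=\Lambda_X$ (which is the computation in \cref{cor:lambda-sharp-coerc} together with $\delta\Lambda_C^{\dagger}\leq 1$), so that \cref{lem:exp} is applied to the observed time $X$ and its own cumulative hazard rather than to the latent $C$. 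Your route buys two things: it stays entirely within observed-data quantities, eliminating the independence/Peterson case split, and it sidesteps the slightly delicate reliance on a predictable-compensator theorem for the nonstandard bracket of $M_C^{\dagger}$, needing only a second rather than a third moment; the paper's route, in exchange, exercises the variation-process machinery of \cref{lem:stp1} that is reused in the covariation results. Two small points you should make explicit if writing this up: the bound $\delta\Lambda_C^{\dagger}(u)\leq 1$ (immediate from its definition), and that the comparison $\Lambda_C^{\dagger}(t)\leq\Lambda_X(t)$, derived in the paper for $t<\tau$, extends to the atom at $\tau$ (trivially, by the $0/0=0$ convention or the pointwise increment identity when $\E\{Y^{\dagger}(\tau)\}>0$), which is all you need since $X\leq\tau$ almost surely.
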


\begin{proof}
    Define $L(t) := \int_{(0,\cdot]} M_C^{\dagger}(u-) \,\d M_C^{\dagger}(u)$. 
    The process $L$ is a $\F_t$-martingale if 
    \begin{align}
        \E \int_{(0,\infty)} \left( M_C^{\dagger}(u-) \right)^2 \,\d \langle M_C^{\dagger}, M_C^{\dagger} \rangle (u)
        & = \E \int_{(0,\infty)} \left( M_C^{\dagger}(u-) \right)^2 \frac{G^{\dagger}(u)}{G^{\dagger}(u-)} Y^{\dagger}(u) \,\d \Lambda_C^{\dagger}(u) \nonumber \\
        & \hspace{-20mm} \leq \E \int_{(0,\infty)} \left\{ M_C^{\dagger}(u-) \right\}^2 Y^{\dagger}(u) \,\d \Lambda_C^{\dagger}(u) \label{eq:llm2-eq1}
    \end{align}
    is finite, where the equality concerning the local variation process is from \cref{lem:stp1} \citep[Theorem 2.4.4]{fleming1991counting}. 

    The integrand in \cref{eq:llm2-eq1} may be written 
    \begin{align*}
        \left\{ M_C^{\dagger}(u-) \right\}^2
        & = \left\{ N_C(u-) - \int_{(0,t)} Y^{\dagger}(u) \,\d \Lambda_C^{\dagger}(u) \right\}^2 \\
        & \hspace{-10mm} = N_C(u-) -2 N_C(u-) \int_{(0,u)} Y^{\dagger}(v) \,\d \Lambda_C^{\dagger}(v) + \left\{ \int_{(0,u)} Y^{\dagger}(v) \,\d \Lambda_C^{\dagger}(v) \right\}^2. 
    \end{align*}
    Since $N_C(u-) Y^{\dagger}(u) = 0$ for all $u>0$, we see that \cref{eq:llm2-eq1} satisfies 
    \begin{align}
        \E \int_{(0,\infty)} \left\{ M_C^{\dagger}(u-) \right\}^2 \,\d \langle M_C^{\dagger}, M_C^{\dagger} \rangle (u)
        & \leq \E \int_{(0,\infty)} \left\{ \int_{(0,u)} Y^{\dagger}(v) \,\d \Lambda_C^{\dagger}(v) \right\}^2 Y^{\dagger}(u) \,\d \Lambda_C^{\dagger}(u) \nonumber \\
        & \leq \E \left[ \left\{ \int_{(0,\infty)} Y^{\dagger}(u) \,\d \Lambda_C^{\dagger}(u) \right\}^3 \right] \tag{since the squared term is increasing in $u$} \nonumber \\
        & \leq \E \left[ \left\{ \Lambda_C^{\dagger}(X) \right\}^3 \right]. \label{eq:third-moment}
    \end{align}
    If $T \ind C$, then \cref{lem:exp} shows that 
    \begin{align*}
        \E \{ \Lambda_C^{\dagger}(X)^3 \}
        = \E \{ \Lambda_C(X)^3 \}
        \leq \E \{ \Lambda_C(C)^3 \}
        \leq \E \left\{ \mathrm{Exp}(1)^3 \right\}
        = 6
        < \infty,
    \end{align*}
    where the first equality follows from \cref{prop:identif} as the expectation is only over $u$ such that $\E \{ Y^{\dagger}(u) \} > 0$. When $T \not\ind C$, there exists latent variables such that the same arguments holds with $\Lambda_C^{\dagger} \leq \Lambda_C$ \citep[see e.g.][]{peterson1976bounds}. 
\end{proof}

\begin{prop}
    \label{prop:dag-square-mart}
    The process 
    \begin{equation*}
        \left( M_C^{\dagger} \right)^2 - \int_{(0,\cdot]} \frac{G^{\dagger}(u)}{G^{\dagger}(u-)} Y^{\dagger}(u) \,\d \Lambda_C^{\dagger}(u)
    \end{equation*}
    is an $\F_t$-martingale. 
\end{prop}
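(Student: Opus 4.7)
The plan is to revisit the algebraic identity \eqref{eq:algm} that was the engine behind \cref{lem:stp1}, and observe that each summand on its right-hand side is in fact a true $\F_t$-martingale, not merely a local one. Recall \eqref{eq:algm}:
\[
\left( M_C^{\dagger}(t) \right)^2 - \int_{(0,t]} \{ 1 - \delta A_C^{\dagger}(u) \} \,\d A_C^{\dagger}(u) = 2 \int_{(0,t]} M_C^{\dagger}(u-) \,\d M_C^{\dagger}(u) + M_C^{\dagger}(t) - 2 \int_{(0,t]} \delta A_C^{\dagger}(u) \,\d M_C^{\dagger}(u).
\]
The term $M_C^{\dagger}(t)$ is a martingale by \cref{prop:dagger-martingale}, and the first term on the right-hand side is a martingale by \cref{lem:ll-mart2}, which strengthens \cref{lem:stp0} from a local to a true martingale statement. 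For the final term, note that $\delta A_C^{\dagger}(u) = Y^{\dagger}(u) \delta \Lambda_C^{\dagger}(u)$ is bounded by $1$ and half-predictable, so \cref{thm:martingale-transform} implies that $\int_{(0,\cdot]} \delta A_C^{\dagger}(u) \,\d M_C^{\dagger}(u)$ is an $\F_t$-martingale. Summing the three true martingales gives that the left-hand side is itself an $\F_t$-martingale.

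To match the compensator to the form stated in the proposition, I use that $Y^{\dagger} \in \{0,1\}$, so $(Y^{\dagger})^2 = Y^{\dagger}$. Then
\[
\{ 1 - \delta A_C^{\dagger}(u) \} \,\d A_C^{\dagger}(u) = \{ 1 - Y^{\dagger}(u) \delta \Lambda_C^{\dagger}(u) \} Y^{\dagger}(u) \,\d \Lambda_C^{\dagger}(u) = Y^{\dagger}(u) \{ 1 - \delta \Lambda_C^{\dagger}(u) \} \,\d \Lambda_C^{\dagger}(u).
\]
Since the product integral defining $G^{\dagger}$ gives $G^{\dagger}(u)/G^{\dagger}(u-) = 1 - \delta \Lambda_C^{\dagger}(u)$, the right-hand side is exactly $\frac{G^{\dagger}(u)}{G^{\dagger}(u-)} Y^{\dagger}(u) \,\d \Lambda_C^{\dagger}(u)$, yielding the claimed compensator.

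The main obstacle is the promotion from local to true martingale, which rests entirely on \cref{lem:ll-mart2} (already proved using \cref{lem:exp} and third-moment bounds on $\Lambda_C^{\dagger}(X)$). Once that is in hand, the proof is essentially a bookkeeping exercise: checking that all three summands on the right of \eqref{eq:algm} qualify as true martingales, and then performing the $Y^{\dagger} = (Y^{\dagger})^2$ simplification. As a backup route, if one wanted to avoid invoking \cref{thm:martingale-transform} on $\delta A_C^{\dagger}$, one could instead argue directly from \cref{lem:stp1} by verifying that $\E[\sup_t |L(t)|] < \infty$ for $L(t) := (M_C^{\dagger}(t))^2 - \int_{(0,t]} \{G^{\dagger}(u)/G^{\dagger}(u-)\} Y^{\dagger}(u) \,\d \Lambda_C^{\dagger}(u)$, using the uniform bounds $|M_C^{\dagger}(t)| \leq 1 + \Lambda_C^{\dagger}(X)$ and $\int_{(0,t]} Y^{\dagger}(u) \,\d \Lambda_C^{\dagger}(u) \leq \Lambda_C^{\dagger}(X)$, together with the finite second moment of $\Lambda_C^{\dagger}(X)$ from the proof of \cref{lem:ll-mart2}; then a local martingale with an integrable maximal function is a true martingale.
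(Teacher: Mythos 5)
Your proof is correct and takes essentially the same route as the paper: the paper's own proof simply cites the identity \eqref{eq:algm} together with \cref{lem:ll-mart2}, with the middle and last summands already handled as true martingales via \cref{thm:martingale-transform} in the proof of \cref{lem:stp1}, and your simplification of the compensator via $(Y^{\dagger})^2 = Y^{\dagger}$ and $G^{\dagger}(u)/G^{\dagger}(u-) = 1 - \delta \Lambda_C^{\dagger}(u)$ is exactly the step the paper leaves implicit. The backup uniform-integrability argument is a reasonable alternative but is not needed.
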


\begin{proof}
    The follows immediately from the algebraic decomposition \cref{eq:algm} in the proof of \cref{lem:stp1} and from \cref{lem:ll-mart2}. 
\end{proof}

We now prove the main result of this section. 

\begin{proof}[Proof of \cref{thm:covar-proc}]
    Define the stochastic process $Q$ so that 
    \begin{equation*}
        Q(t)
        := (H_1 \cdot M_C^{\dagger})(t) (H_2 \cdot M_C^{\dagger})(t) - \int_{(0,t]} H_1(u) H_2(u) \frac{G^{\dagger}(u)}{G^{\dagger}(u-)} Y^{\dagger}(u) \,\d \Lambda_C^{\dagger}(u).  
    \end{equation*}
    The proof that $Q$ is an $\F_t$-martingale will have three steps: first we establish the claim when $H_1$ and $H_2$ are simple, second we establish the claim when $H_1$ is simple and $H_2$ is bounded, and third we establish the full claim. 

    First, suppose that $H_1$ and $H_2$ are simple half-predictable processes so that $H_j = I_{B_j}$ for a half-predicable $B_j$, defined in \cref{eq:half-pred-rect}. We must show that $Q$ is adapted, absolutely integrable, and is drift-free. $Q$ is clearly adapted, and
    \begin{align*}
        & \E \left\{ \left| (H_1 \cdot M_C^{\dagger})(t) (H_2 \cdot M_C^{\dagger})(t) - \int_{(0,t]} H_1(u) H_2(u) \frac{G^{\dagger}(u)}{G^{\dagger}(u-)} Y^{\dagger}(u) \,\d \Lambda_C^{\dagger}(u) \right| \right\} \\
        & \hspace{15mm} \leq \E \left\{ (M_C^{\dagger})^2(t) \right\} + \E \left\{ \int_{(0,t]} Y^{\dagger}(u) \,\d \Lambda_C^{\dagger}(u) \right\} \\
        & \hspace{15mm} \leq 2 \, \E \left\{ \int_{(0,t]} Y^{\dagger}(u) \,\d \Lambda_C^{\dagger}(u) \right\} \\
        & \hspace{15mm} = 2 \, \E \left\{ N_C(u) \right\}
        < \infty, 
    \end{align*}
    where the second inequality follow from \cref{prop:dag-square-mart} and the third equality follows from \cref{prop:dagger-martingale}. 
    As the final matter in this first case, we now show that $Q$ is drift-free. This follows from the same argument in \citet[Theorem 2.4.2]{fleming1991counting}. Thus $Q$ is an $\F_t$-martingale when $H_1, H_2$ are simple. 

    Second, it follows that $Q$ is an $\F_t$-martingale when $H_1$ is simple through the same monotone class argument in \citet[Theorem 2.4.2]{fleming1991counting}.

    Third, it follows that $Q$ is an $\F_t$-martingale when $H_2$ is bounded through another monotone class argument, as in \citet[Theorem 2.4.2]{fleming1991counting}. This is the final case, so the proof that $Q$ is an $\F_t$-martingale is complete. The proof for the stochastic process $(H_1 \cdot M_C^{\dagger})(H_3 \cdot M_T^{\sharp})$ is similar so omitted. 
\end{proof}

\subsection{Proof of variation and covariation processes in discrete time}
\label{sec:supp:mt-addntl:var-and-covar-discrete}

In this section we state and prove a version of \cref{thm:covar-proc} that is valid in discrete time and avoids measure theoretic technicalities. For simplicity, we make use of the lemmas in \cref{sec:supp:mt-addntl:var-and-covar}. 

\begin{thm}
\label{thm:covar-proc-discrete}
 Assume $X$ is discrete. If $H_1$ and $H_2$ are $\G_t$-adapted and bounded, then the following is a $\F_t$-martingale 
 \begin{equation}
    (H_1 \boldsymbol{\cdot} M^{\dagger}_{C}) (t) \, (H_2 \boldsymbol{\cdot} M^{\dagger}_{C}) (t) - \int_{(0,t]} H_1(u) H_2(u) \frac{G^{\dagger}(u)}{G^{\dagger}(u-)} Y^{\dagger}(u) \,\d \Lambda_C^{\dagger}(u). \label{eq:half-pred-var}
 \end{equation}
 If $H_1$ is $\G_t$-adapted and bounded and $H_3$ is $\F_t$-predictable and bounded, then the following is a $\F_t$-martingale 
 \begin{equation*}
    (H_1 \boldsymbol{\cdot} M^{\dagger}_{C}) (t) \, (H_3 \boldsymbol{\cdot} M^{\sharp}_{T}) (t). 
 \end{equation*}
\end{thm}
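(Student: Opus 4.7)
The plan is to verify the discrete-time martingale condition $\E\{\delta Q(u) \mid \F_{u-}\}=0$ at each jump $u$, where $Q$ denotes the process in \cref{eq:half-pred-var}, and the analogous condition for the cross term. Writing $L_j(t) := (H_j \boldsymbol{\cdot} M_C^{\dagger})(t)$, so that $\delta L_j(u) = H_j(u)\,\delta M_C^{\dagger}(u)$, the discrete product rule gives
$$\delta (L_1 L_2)(u) = \bigl\{L_1(u-) H_2(u) + L_2(u-) H_1(u)\bigr\}\, \delta M_C^{\dagger}(u) + H_1(u) H_2(u)\, \bigl(\delta M_C^{\dagger}(u)\bigr)^2.$$
The bracketed prefactor is $\G_u$-measurable because $L_j(u-)$ is $\F_{u-}$-measurable, hence $\G_u$-measurable via $\F_{u-} \subseteq \G_u$, and $H_j$ is $\G_u$-adapted by assumption.

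The crux of the argument is the conditional-variance identity
$$\E\bigl\{(\delta M_C^{\dagger}(u))^2 \bigm| \G_u\bigr\} = \frac{G^{\dagger}(u)}{G^{\dagger}(u-)}\, Y^{\dagger}(u)\, \delta \Lambda_C^{\dagger}(u),$$
which we derive by expanding $(\delta M_C^{\dagger}(u))^2$ and applying $(\delta N_C(u))^2 = \delta N_C(u)$, the identity $\E\{\delta N_C(u) \mid \G_u\} = Y^{\dagger}(u)\, \delta\Lambda_C^{\dagger}(u)$ established in the proof of \cref{thm:martingale-transform-discrete}, the $\G_u$-measurability of $Y^{\dagger}(u)$, and the product-integral identity $G^{\dagger}(u)/G^{\dagger}(u-) = 1 - \delta\Lambda_C^{\dagger}(u)$. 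Combined with $\E\{\delta M_C^{\dagger}(u) \mid \G_u\} = 0$ and the $\G_u$-measurability of all prefactors, this yields $\E\{\delta Q(u) \mid \G_u\} = 0$; the tower property then gives $\E\{\delta Q(u) \mid \F_{u-}\} = 0$ since $\F_{u-} \subseteq \G_u$.

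For the cross term $(H_1 \boldsymbol{\cdot} M_C^{\dagger})(H_3 \boldsymbol{\cdot} M_T^{\sharp})$, we apply the same product-rule decomposition and invoke the algebraic identity
$$\delta M_C^{\dagger}(u)\, \delta M_T^{\sharp}(u) = -\delta\Lambda_T^{\sharp}(u)\, \delta M_C^{\dagger}(u),$$
which follows from $\delta N_T(u)\, \delta N_C(u) = 0$ (a consequence of the asymmetric definition of $\Delta$) and $Y^{\dagger}(u)\, \delta N_T(u) = 0$ (directly from the definition of $Y^{\dagger}$). This collapses the cross-increment into a $\G_u$-adapted scalar multiple of $\delta M_C^{\dagger}(u)$, which together with $L_3(u-) H_1(u)\, \delta M_C^{\dagger}(u)$ vanishes in conditional expectation given $\G_u$; the remaining summand $L_1(u-) H_3(u)\, \delta M_T^{\sharp}(u)$ has an $\F_{u-}$-measurable prefactor, so the Doob-Meyer property of $M_T^{\sharp}$ makes it vanish in conditional expectation given $\F_{u-}$. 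Tower then delivers the martingale condition, and no separate compensator is required.

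We anticipate that the main obstacle will be integrability rather than the conditional-mean calculations: the prefactors $L_j(u-)$ are not uniformly bounded even when $H_j$ is, so one cannot directly reuse \cref{thm:martingale-transform-discrete}. We plan to handle this via localization at the $n$-th jump time of $N_T + N_C$, showing the stopped processes are bounded $L^2$-martingales using $\E\{N_C(\cdot)\} \leq 1$ and $\E\{A_C^{\dagger}(\cdot)\} \leq 1$ from \cref{prop:dagger-martingale}, and passing to the limit via Cauchy--Schwarz to control $\E|L_1(t) L_2(t)|$.
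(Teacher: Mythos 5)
Your drift computation is correct, and it takes a genuinely different route from the paper. The paper proves \cref{thm:covar-proc-discrete} by writing the product of transforms over $(t,t+s]$ as a double sum, splitting it into the regions $u<v$, $u=v$, $u>v$, killing the off-diagonal parts by conditioning on $\G_v$ (resp.\ $\G_u$), and then handling the diagonal term by relating $\{\delta M_C^{\dagger}(v)\}^2$ to the increment of $(M_C^{\dagger})^2$ and invoking \cref{prop:dag-square-mart} — a result obtained through the continuous-time machinery of \cref{sec:supp:mt-addntl:var-and-covar} — to produce the centering term; the cross-covariation is handled with the same identity $\delta M_C^{\dagger}\,\delta M_T^{\sharp} = -\delta\Lambda_T^{\sharp}\,\delta M_C^{\dagger}$ and an appeal to \cref{thm:martingale-transform}. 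You instead apply a one-step product rule to the increments and compute the conditional second moment $\E\{(\delta M_C^{\dagger}(u))^2 \mid \G_u\} = \{1-\delta\Lambda_C^{\dagger}(u)\}\,Y^{\dagger}(u)\,\delta\Lambda_C^{\dagger}(u)$ directly from $\E\{\delta N_C(u)\mid\G_u\} = Y^{\dagger}(u)\,\delta\Lambda_C^{\dagger}(u)$; I checked this identity and it is right (note $\delta N_C(u) Y^{\dagger}(u)=\delta N_C(u)$, or simply use $\G_u$-measurability of $Y^{\dagger}(u)$). This derives the centering term from scratch rather than importing it from \cref{prop:dag-square-mart}, so your argument is more self-contained and elementary than the paper's in the discrete case; your treatment of the cross term (splitting into a $\G_u$-measurable prefactor times $\delta M_C^{\dagger}$ and an $\F_{u-}$-measurable prefactor times $\delta M_T^{\sharp}$) is also correct, since $H_3(u)$ and $L_1(u-)$ are $\F_{u-}$-measurable while $\F_{u-}\subseteq\G_u$.

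The one place your plan would fail as written is the integrability step. Localizing at the jump times of $N_T+N_C$ buys nothing here: for a single observation that process jumps at most once (at $X$), and $M_C^{\dagger}$ is constant after $X$ in any case, so stopping there does not bound $L_j(u-)$. The unboundedness comes entirely from the centering term, $|L_j(t)| \leq c\,\{N_C(t) + A_C^{\dagger}(t)\}$ with $A_C^{\dagger}(t)$ essentially equal to $\Lambda_C^{\dagger}(X)$ or $\Lambda_C^{\dagger}(X-)$ by \cref{eq:centterm-rc}, and first moments such as $\E\{A_C^{\dagger}(t)\}\leq 1$ do not control $\E|L_1(t)L_2(t)|$. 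The fix is the one the paper itself uses in \cref{lem:ll-mart2}: bound $\E[\{\Lambda_C^{\dagger}(X)\}^2]$ via the stochastic domination of \cref{lem:exp} (by moments of an $\mathrm{Exp}(1)$ variable), then apply Cauchy--Schwarz to get $\E|L_1(t)L_2(t)|<\infty$ and integrability of the centering integral; the same moment bound also justifies interchanging the (possibly infinite) sum of increments with the conditional expectation in your drift calculation. With that substitution your proof goes through.
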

 
\begin{proof}
    We start by considering the variation process. %
    Adaptedness and integrability are straightforward to verify, so we focus on proving the no-drift property. 

    Let $s,t>0$. By expanding the integrals over disjoint intervals $(0,t]$ and $(t, t+s]$, we can show that 
    \begin{align}
        & \E \left\{ \int_{(0,t+s]} H_1(u) \,\d M_C^{\dagger}(u) \int_{(0,t+s]} H_2(u) \,\d M_C^{\dagger}(u) \,\middle|\, \F_t \right\} \nonumber \\
        =\, & \int_{(0,t]} H_1(u) \,\d M_C^{\dagger}(u) \int_{(0,t]} H_2(u) \,\d M_C^{\dagger}(u) 
        + \E \left\{ \int_{(t,t+s]} H_1(u) \,\d M_C^{\dagger}(u) \int_{(t,t+s]} H_2(u) \,\d M_C^{\dagger}(u) \,\middle|\, \F_t \right\}. \label{eq:covar-cent-basic}
    \end{align}
    We may determine the desired centering term by expressing the conditional expectation in \cref{eq:covar-cent-basic} as a contrast between the centering term evaluated at $t+s$ and at $t$. Note, if the centering term is an integral over $(0, \cdot]$, then this contrast will simply be an integral over $(t, t+s]$. 
    
    By expanding the interval $(t,t+s] = (t,v) \cup \{v\} \cup (v, t+s]$, the double integral in the conditional expectation may be written as
    \begin{align}
        \int_{(t,t+s]} H_1(u) \,\d M_C^{\dagger}(u) \int_{(t,t+s]} H_2(u) \,\d M_C^{\dagger}(u) 
        & = \int_{(t,t+s]} \int_{(t,t+s]} H_1(u) H_2(v) \,\d M_C^{\dagger}(u) \,\d M_C^{\dagger}(v) \nonumber \\
        & \hspace{-60mm} = \int_{(t,t+s]} \int_{(t,v)} H_1(u) H_2(v) \,\d M_C^{\dagger}(u) \,\d M_C^{\dagger}(v) 
        + \int_{(t,t+s]} \int_{\{v\}} H_1(u) H_2(v) \,\d M_C^{\dagger}(u) \,\d M_C^{\dagger}(v) \nonumber \\
        & \hspace{-45mm} + \int_{(t,t+s]} \int_{(v,t+s]} H_1(u) H_2(v) \,\d M_C^{\dagger}(u) \,\d M_C^{\dagger}(v). \label{eq:covar-3part}
    \end{align}

    We now simplify the conditional expectation of each of these three summands. Recall that we are assuming that censoring is discrete so that the identified cumulative hazard for censoring is
    \begin{equation*}
        \Lambda_C^{\dagger}(t)
        = \sum_{0 < u \leq t} \delta \Lambda_C^{\dagger}(u). 
    \end{equation*}

    The conditional expectation of the first summand in \cref{eq:covar-3part} may be seen to vanish by leveraging that $u<v$, that is, since
    \begin{align*}
        & \E \left\{ \int_{(t,t+s]} \int_{(t,v)} H_1(u) H_2(v) \,\d M_C^{\dagger}(u) \,\d M_C^{\dagger}(v) \,\middle|\, \F_t \right\} \\
        & \hspace{25mm} = \E \left\{ \sum_{t < v \leq t+s} \sum_{ t < u < v} H_1(u) H_2(v) \,\delta M_C^{\dagger}(u) \,\delta M_C^{\dagger}(v) \,\middle|\, \F_t \right\} \\
        & \hspace{25mm} = \E \left[ \sum_{t < v \leq t+s} \sum_{ t < u < v} \E \left\{ H_1(u) H_2(v) \,\delta M_C^{\dagger}(u) \,\delta M_C^{\dagger}(v) \,\middle|\, \G_v \right\} \,\middle|\, \F_t \right] \\ %
        & \hspace{25mm} = \E \left[ \sum_{t < v \leq t+s} \sum_{ t < u < v} H_1(u) H_2(v) \,\delta M_C^{\dagger}(u) \E \left\{ \delta M_C^{\dagger}(v)  \,\middle|\, \G_v \right\} \,\middle|\, \F_t \right] \\
        & \hspace{25mm} = 0. 
    \end{align*}
    Similarly, the conditional expectation of the last summand in \cref{eq:covar-3part} may be seen to vanish by leveraging that $u>v$, that is, since
    \begin{align*}
        & \E \left\{ \int_{(t,t+s]} \int_{(v, t+s]} H_1(u) H_2(v) \,\d M_C^{\dagger}(u) \,\d M_C^{\dagger}(v) \,\middle|\, \F_t \right\} \\
        & \hspace{25mm} = \E \left\{ \sum_{t < v \leq t+s} \sum_{v < u < t+s} H_1(u) H_2(v) \,\delta M_C^{\dagger}(u) \,\delta M_C^{\dagger}(v) \,\middle|\, \F_t \right\} \\
        & \hspace{25mm} = \E \left[ \sum_{t < v \leq t+s} \sum_{v < u < t+s} \E \left\{ H_1(u) H_2(v) \,\delta M_C^{\dagger}(u) \,\delta M_C^{\dagger}(v) \,\middle|\, \G_u \right\} \,\middle|\, \F_t \right] \\ 
        & \hspace{25mm} = \E \left[ \sum_{t < v \leq t+s} \sum_{v < u < t+s} H_1(u) H_2(v) \,\E \left\{ \delta M_C^{\dagger}(u) \,\middle|\, \G_u \right\} \,\delta M_C^{\dagger}(v) \,\middle|\, \F_t \right] \\
        & \hspace{25mm} = 0. 
    \end{align*}

    The conditional expectation of the middle summand in \cref{eq:covar-3part}
    \begin{align}
        \E \left\{ \int_{(t,t+s]} \int_{\{v\}} H_1(u) H_2(v) \,\d M_C^{\dagger}(u) \,\d M_C^{\dagger}(v) \,\middle|\, \F_t \right\} 
        & = \E \left\{ \int_{(t,t+s]} H_1(v) H_2(v) \,\delta M_C^{\dagger}(v) \,\d M_C^{\dagger}(v) \,\middle|\, \F_t \right\} \nonumber \\
        & \hspace{-30mm} = \E \left[ \sum_{t < v \leq t+s} H_1(v) H_2(v) \left\{ \,\delta M_C^{\dagger}(v) \right\}^2 \,\middle|\, \F_t \right], \label{eq:mid-sum}
    \end{align}
    where the last equality applies the discreteness assumption, 
    is more complicated to handle. Since
    \begin{align*}
        \delta \left\{ M_C^{\dagger}(v) M_C^{\dagger}(v) \right\}
        & = \left\{ N_C(v) - \sum_{0 < u \leq v} Y^{\dagger}(u) \delta \Lambda_C^{\dagger}(u) \right\}^2 - \left\{ N_C(v-) - \sum_{0 < u < v} Y^{\dagger}(u) \delta \Lambda_C^{\dagger}(u) \right\}^2 \\
        & \hspace{-15mm} = \left\{ \delta N_C(v) - Y^{\dagger}(v) \delta \Lambda_C^{\dagger}(v) \right\}^2 + 2 \left\{ \delta N_C(v) - Y^{\dagger}(v) \delta \Lambda_C^{\dagger}(v) \right\} \left\{ N_C(v-) - \sum_{0 < u < v} Y^{\dagger}(u) \delta \Lambda_C^{\dagger}(u) \right\} \\
        & \hspace{-15mm} = \left\{ \delta N_C(v) - Y^{\dagger}(v) \delta \Lambda_C^{\dagger}(v) \right\}^2 - 2 \left\{ \delta N_C(v) - Y^{\dagger}(v) \delta \Lambda_C^{\dagger}(v) \right\} \sum_{0 < u < v} Y^{\dagger}(u) \delta \Lambda_C^{\dagger}(u) \\
        & \hspace{-15mm} = \left\{ \delta M_C^{\dagger}(v) \right\}^2 - 2 \, \delta M_C^{\dagger}(v) \sum_{0 < u < v} Y^{\dagger}(u) \delta \Lambda_C^{\dagger}(u),
    \end{align*}
    we see that \cref{eq:mid-sum} equals 
    \begin{align}
        & \E \left[ \sum_{t < v \leq t+s} H_1(v) H_2(v) \left\{ \delta \left\{ M_C^{\dagger}(v) M_C^{\dagger}(v) \right\} + 2 \, \delta M_C^{\dagger}(v) \sum_{0 < u < v} Y^{\dagger}(u) \delta \Lambda_C^{\dagger}(u) \right\} \,\middle|\, \F_t \right] \nonumber \\
        & \hspace{15mm} = \E \left[ \sum_{t < v \leq t+s} H_1(v) H_2(v) \left\{ \delta \left\{ M_C^{\dagger}(v) M_C^{\dagger}(v) \right\} + 2 \, \E \left( \delta M_C^{\dagger}(v) \middle| \G_v \right) \sum_{0 < u < v} Y^{\dagger}(u) \delta \Lambda_C^{\dagger}(u) \right\} \,\middle|\, \F_t \right] \nonumber \\
        & \hspace{15mm} = \E \left[ \sum_{t < v \leq t+s} H_1(v) H_2(v) \,\delta \left\{ M_C^{\dagger}(v) M_C^{\dagger}(v) \right\} \,\middle|\, \F_t \right]. \label{eq:cent2}
    \end{align}
    
    Since \cref{eq:cent2} has $\left\{ M_C^{\dagger}(v) \right\}^2$ as an integrator and \Cref{prop:dag-square-mart} shows that 
    \begin{equation} 
        \left\{ M_C^{\dagger}(v) \right\}^2
        - \int_{(0,v]} \left\{ 1 - \delta \Lambda_C^{\dagger}(u) \right\} Y^{\dagger}(u) \,\d \Lambda_C^{\dagger}(u) \label{eq:cens-var-mart} 
    \end{equation} 
    is an $\F_v$-martingale, we now seek to express \cref{eq:cent2} in a way involving a martingale transform with respect to \cref{eq:cens-var-mart}. 
    Since the martingale transform satisfies
    \begin{align*}
        & \E \left[ \sum_{t < v \leq t+s} H_1(v) H_2(v) \biggl\langle \delta \left\{ M_C^{\dagger}(v) M_C^{\dagger}(v) \right\} - \left\{ 1 - \delta \Lambda_C^{\dagger}(v) \right\} Y^{\dagger}(v) \,\delta \Lambda_C^{\dagger}(v) \biggr\rangle \,\middle|\, \F_t \right] \\
        & \hspace{15mm} = \E \left[ \sum_{t < v \leq t+s} H_1(v) H_2(v) \E \biggl\langle \delta \left\{ M_C^{\dagger}(v) M_C^{\dagger}(v) \right\} - \left\{ 1 - \delta \Lambda_C^{\dagger}(v) \right\} Y^{\dagger}(v) \,\delta \Lambda_C^{\dagger}(v) \bigg| \G_v \biggr\rangle \,\middle|\, \F_t \right] \\
        & \hspace{15mm} = 0,
    \end{align*}
    by adding and subtracting we see that \cref{eq:cent2} equals
    \begin{align*}
        & \E \left[ \sum_{t < v \leq t+s} H_1(v) H_2(v) \left\{ 1 - \delta \Lambda_C^{\dagger}(v) \right\} Y^{\dagger}(v) \,\delta \Lambda_C^{\dagger}(v) \,\middle|\, \F_t \right] \\
        & \hspace{30mm} = \E \left[ \int_{(t, t+s]} H_1(v) H_2(v) \left\{ 1 - \delta \Lambda_C^{\dagger}(v) \right\} Y^{\dagger}(v) \,\delta \Lambda_C^{\dagger}(v) \,\middle|\, \F_t \right]
    \end{align*}

    In summary, we have shown that 
    \begin{align*}
        & \E \left\{ \int_{(0,t+s]} H_1(u) \,\d M_C^{\dagger}(u) \int_{(0,t+s]} H_2(u) \,\d M_C^{\dagger}(u) \,\middle|\, \F_t \right\} \nonumber \\
        =\, & \int_{(0,t]} H_1(u) \,\d M_C^{\dagger}(u) \int_{(0,t]} H_2(u) \,\d M_C^{\dagger}(u) 
        + \E \left\{ \int_{(t,t+s]} H_1(u) \,\d M_C^{\dagger}(u) \int_{(t,t+s]} H_2(u) \,\d M_C^{\dagger}(u) \,\middle|\, \F_t \right\} \\
        =\, & \int_{(0,t]} H_1(u) \,\d M_C^{\dagger}(u) \int_{(0,t]} H_2(u) \,\d M_C^{\dagger}(u) 
        + \E \left[ \int_{(t, t+s]} H_1(v) H_2(v) \left\{ 1 - \delta \Lambda_C^{\dagger}(v) \right\} Y^{\dagger}(v) \,\delta \Lambda_C^{\dagger}(v) \,\middle|\, \F_t \right],
    \end{align*}
    proving that the stochastic process is drift free. It is therefore a martingale.

    We now turn our attention to the covariation process. %
    Adaptedness and integrability are straightforward to verify, so we focus on proving the no-drift property. The key steps of the proof are the same as in the previous case for the variation process; however, we go through all of the details again for completeness. 
    
    Let $s,t>0$. By expanding the integrals over disjoint intervals $(0,t]$ and $(t, t+s]$, we can show that 
    \begin{align}
        & \E \left\{ \int_{(0,t+s]} H_1(u) \,\d M_C^{\dagger}(u) \int_{(0,t+s]} H_3(u) \,\d M_T^{\sharp}(u) \,\middle|\, \F_t \right\} \nonumber \\
        =\, & \int_{(0,t]} H_1(u) \,\d M_C^{\dagger}(u) \int_{(0,t]} H_3(u) \,\d M_T^{\sharp}(u) 
        + \E \left\{ \int_{(t,t+s]} H_1(u) \,\d M_C^{\dagger}(u) \int_{(t,t+s]} H_3(u) \,\d M_T^{\sharp}(u) \,\middle|\, \F_t \right\}, \label{eq:f-covar-cent-basic}
    \end{align}
    since each integral is a martingale so is mean zero conditional on $\F_t$. 
    We may determine the desired centering term by expressing the conditional expectation in \cref{eq:f-covar-cent-basic} as a contrast between the centering term evaluated at $t+s$ and at $t$. Note, if the centering term is an integral over $(0, \cdot]$, then this contrast will simply be an integral over $(t, t+s]$. 

    By expanding the interval $(t,t+s] = (t,v) \cup \{v\} \cup (v, t+s]$, the double integral in the conditional expectation may be written as
    \begin{align}
        \int_{(t,t+s]} H_1(u) \,\d M_C^{\dagger}(u) \int_{(t,t+s]} H_3(u) \,\d M_T^{\sharp}(u) 
        & = \int_{(t,t+s]} \int_{(t,t+s]} H_1(u) H_3(v) \,\d M_C^{\dagger}(u) \,\d M_T^{\sharp}(v) \nonumber \\
        & \hspace{-60mm} = \int_{(t,t+s]} \int_{(t,v)} H_1(u) H_3(v) \,\d M_C^{\dagger}(u) \,\d M_T^{\sharp}(v) 
        + \int_{(t,t+s]} \int_{\{v\}} H_1(u) H_3(v) \,\d M_C^{\dagger}(u) \,\d M_T^{\sharp}(v) \nonumber \\
        & \hspace{-45mm} + \int_{(t,t+s]} \int_{(v,t+s]} H_1(u) H_3(v) \,\d M_C^{\dagger}(u) \,\d M_T^{\sharp}(v). \label{eq:f-covar-3part}
    \end{align}
    
    We now simplify the conditional expectation of each of these three summands. Recall that we are assuming that the time at-risk is discrete so that the identified cumulative hazards are
    \begin{equation*}
        \Lambda_C^{\dagger}(t)
        = \sum_{0 < u \leq t} \delta \Lambda_C^{\dagger}(u); \hspace{5mm}
        \Lambda_T^{\sharp}(t)
        = \sum_{0 < u \leq t} \delta \Lambda_T^{\sharp}(u). 
    \end{equation*}
    
    The conditional expectation of the first summand in \cref{eq:covar-3part} may be seen to vanish by leveraging that $u<v$, that is, since
    \begin{align*}
        & \E \left\{ \int_{(t,t+s]} \int_{(t,v)} H_1(u) H_3(v) \,\d M_C^{\dagger}(u) \,\d M_T^{\sharp}(v) \,\middle|\, \F_t \right\} \\
        & \hspace{25mm} = \E \left\{ \sum_{t < v \leq t+s} \sum_{ t < u < v} H_1(u) H_2(v) \,\delta M_C^{\dagger}(u) \,\delta M_T^{\sharp}(v) \,\middle|\, \F_t \right\} \\
        & \hspace{25mm} = \E \left[ \sum_{t < v \leq t+s} \sum_{ t < u < v} \E \left\{ H_1(u) H_2(v) \,\delta M_C^{\dagger}(u) \,\delta M_T^{\sharp}(v) \,\middle|\, \F_{v-} \right\} \,\middle|\, \F_t \right] \\
        & \hspace{25mm} = \E \left[ \sum_{t < v \leq t+s} \sum_{ t < u < v} H_1(u) H_2(v) \,\delta M_C^{\dagger}(u) \E \left\{ \delta M_T^{\sharp}(v)  \,\middle|\, \F_{v-} \right\} \,\middle|\, \F_t \right] \\
        & \hspace{25mm} = 0, 
    \end{align*}
    where the second equality uses that $H_1(u) \in \G_u \subseteq \F_{v-}$, $\delta M_C^{\dagger}(u) \in \F_u \subseteq \F_{v-}$ since $u<v$, and that $H_3(v) \in \F_{v-}$ by \citet[Prop. 1.4.1]{fleming1991counting}. 

    Similarly, the conditional expectation of the last summand in \cref{eq:covar-3part} may be seen to vanish by leveraging that $u>v$, that is, since 
    \begin{align*}
        & \E \left\{ \int_{(t,t+s]} \int_{(v, t+s]} H_1(u) H_3(v) \,\d M_C^{\dagger}(u) \,\d M_T^{\sharp}(v) \,\middle|\, \F_t \right\} \\
        & \hspace{25mm} = \E \left\{ \sum_{t < v \leq t+s} \sum_{v < u < t+s} H_1(u) H_3(v) \,\delta M_C^{\dagger}(u) \,\delta M_T^{\sharp}(v) \,\middle|\, \F_t \right\} \\
        & \hspace{25mm} = \E \left[ \sum_{t < v \leq t+s} \sum_{v < u < t+s} \E \left\{ H_1(u) H_3(v) \,\delta M_C^{\dagger}(u) \,\delta M_T^{\sharp}(v) \,\middle|\, \G_u \right\} \,\middle|\, \F_t \right] \\ 
        & \hspace{25mm} = \E \left[ \sum_{t < v \leq t+s} \sum_{v < u < t+s} H_1(u) H_3(v) \,\E \left\{ \delta M_C^{\dagger}(u) \,\middle|\, \G_u \right\} \,\delta M_T^{\sharp}(v) \,\middle|\, \F_t \right] \\ 
        & \hspace{25mm} = 0. 
    \end{align*}

    The conditional expectation of the middle summand in \cref{eq:covar-3part}
    \begin{align}
        \E \left\{ \int_{(t,t+s]} \int_{\{v\}} H_1(u) H_3(v) \,\d M_C^{\dagger}(u) \,\d M_T^{\sharp}(v) \,\middle|\, \F_t \right\} 
        & = \E \left\{ \int_{(t,t+s]} H_1(v) H_3(v) \,\delta M_C^{\dagger}(v) \,\d M_T^{\sharp}(v) \,\middle|\, \F_t \right\} \nonumber \\
        & \hspace{-30mm} = \E \left[ \sum_{t < v \leq t+s} H_1(v) H_3(v) \,\delta M_C^{\dagger}(v) \,\delta M_T^{\sharp}(v) \,\middle|\, \F_t \right], \label{eq:f-mid-sum}
    \end{align}
    where the last equality applies the discreteness assumption, 
    is more complicated to handle. Since
    \begin{align*}
        \delta M_C^{\dagger}(v) \delta M_T^{\sharp}(v)
        & = \left\{ \delta N_C(v) - Y^{\dagger}(v) \delta \Lambda_C^{\dagger}(v) \right\} \left\{ \delta N_T(v) - Y^{\sharp}(v) \delta \Lambda_T^{\sharp}(v) \right\} \\
        & \hspace{-15mm} = - \delta \Lambda_T^{\sharp}(v) \left\{ \delta N_C(v) - Y^{\dagger}(v) \delta \Lambda_C^{\dagger}(v) \right\}
    \end{align*}
    we see that \cref{eq:f-mid-sum} equals 
    \begin{align}
        - \E \left[ \sum_{t < v \leq t+s} H_1(v) H_3(v) \delta \Lambda_T^{\sharp}(v) \delta M_C^{\dagger}(v) \,\middle|\, \F_t \right]
        = 0, \label{eq:f-cent2}
    \end{align}
    by \Cref{thm:martingale-transform} since $\delta \Lambda_T^{\sharp}(v) \leq 1$ is bounded. 

    In summary, we have shown that 
    \begin{equation*}
        \E \left\{ \int_{(0,t+s]} H_1(u) \,\d M_C^{\dagger}(u) \int_{(0,t+s]} H_3(u) \,\d M_T^{\sharp}(u) \,\middle|\, \F_t \right\}
        = \int_{(0,t]} H_1(u) \,\d M_C^{\dagger}(u) \int_{(0,t]} H_3(u) \,\d M_T^{\sharp}(u),
    \end{equation*}
    proving that the product of these martingale transforms is itself a martingale. 
\end{proof}

\section{Additional Material for \texorpdfstring{\cref{sec:pred}}{Section 4}}
\label{sec:supp:pred-addntl}

\subsection{Proofs}
\label{sec:supp:pred-addntl:proofs}

Throughout the proofs in this section we adopt the following helpful notation to study tail behavior. Define ${\cal S}^- = \{s: \E[Y^{\sharp}(s+)] >0 \}$, 
${\cal S} = \{s: \E[Y^{\dagger}(s)] >0 \}$,
and ${\cal S}^+ = \{s: \E[Y^{\sharp}(s)] >0 \}$. Then, because 
$Y^{\sharp}(s+) \leq Y^{\dagger}(s) \leq Y^{\sharp}(s)$ for $s \geq 0$,
it can be shown that ${\cal S}^-
\subseteq {\cal S} \subseteq 
{\cal S}^+$. 
Define $\tau = 
\sup {\cal S}^+;$ then, 
because $\sup {\cal S}^+ = \sup {\cal S}^-$,
it follows that 
$\tau = \sup {\cal S}$.

\begin{proof}[Proof of \cref{lem:dag-dm}]
    Consider first the case where $t \in S$. Then, for $t < \tau$ and
    using the definitions of $M_C^{\dagger}(t)$ and $M_C^{\sharp}(t)$
    and the fact that $Y^{\dagger}(u) = Y^{\sharp}(u) - \delta N_T(u)$,
    \begin{align*}
        M_C^{\dagger}(t) - M_C^{\sharp}(t) 
        & = \int_{(0,t]} Y^{\sharp}(u) \,\d \Lambda_C^{\sharp}(u) - \int_{(0,t]} Y^{\sharp}(u) \,\d \Lambda_C^{\dagger}(u) + \int_{(0,t]} \delta N_T(u) \,\d \Lambda_C^{\dagger}(u) \\
        & \hspace{-10mm} = - \int_{(0,t]} Y^{\sharp}(u) \delta \Lambda_T^{\sharp}(u) \,\d \Lambda_C^{\dagger}(u) + \int_{(0,t]} \delta N_T(u) \,\d \Lambda_C^{\dagger}(u) \\
        & \hspace{-10mm} = \int_{(0,t]} \delta M_T^{\sharp}(u) \,\d \Lambda_C^{\dagger}(u) \\
        & \hspace{-10mm} = \int_{(0,t]} \delta \Lambda_C^{\dagger}(u) \,\d M_T^{\sharp}(u),
    \end{align*}
    where the second equality follows from the useful identity
    \begin{equation}
        \frac{\E \{ Y^{\dagger}(u) \}}{\E \{ Y^{\sharp}(u) \}} - 1
        = \frac{\E \{Y^{\dag}(u) \} - \E \{Y^{\sharp}(u) \}}{\E \{Y^{\sharp}(u) \}}
        = \frac{- \E \{ \delta N_T(u) \}}{\E \{Y^{\sharp}(u) \}}
        = - \delta \Lambda_T^{\sharp}(u). \label{eq:Ru} 
    \end{equation}

    Next, suppose $t \geq \tau$. Using the result above, we can decompose $M_C^{\dagger}(t)$ as follows:
    \begin{equation}
    \label{eq:MCdag-decomp}
    M_C^{\dagger}(t) = 
    \Bigl\{
    M_C^{\sharp}(\tau-) +
    \int_{(0,\tau)} \delta \Lambda_C^{\dagger}(u) \,\d M_T^{\sharp}(u)
    \Bigr\}
    +
    \delta M_C^{\dagger}(\tau)
    +    
    \int_{(\tau,t]} \d
    M_C^{\dagger}(t).
    \end{equation}
    Now, for $u > \tau$, we necessarily have
    $\delta N_T(u) = 0$ and $Y^\sharp(u) = 0;$ 
    moreover, by arguments given
    in \cref{sec:supp:basic-addntl:props}
    (see also \cref{cor:lambda-sharp-coerc}), 
    we also have $\delta \Lambda_C^{\dagger}(u) = 0$
    on this same interval. 
    Hence,    
    \[
    \int_{(\tau,t]} \d
    M_C^{\dagger}(t) = 0,
    \]
    and it suffices to consider
    the behavior of $\delta M_C^{\dagger}(\tau)$. 
    
    Similarly to \cref{cor:lambda-sharp-coerc}, it is again useful to separately consider the cases that $\tau \in {\cal S}^c$ and $\tau \in {\cal S}$. Suppose first that $\tau \in {\cal S}^c$. Then, the 
    analysis just done for $u > \tau$ continues to apply;
    hence, $\delta M_C^{\dagger}(\tau)  
    = \delta M_C^{\sharp}(\tau) = 0$
    and it follows  that
    \[
    M_C^{\dagger}(t) = 
    M_C^{\sharp}(\tau) +
    \int_{(0,\tau]} \delta \Lambda_C^{\dagger}(u) \,\d M_T^{\sharp}
    (u)
    = 
    M_C^{\sharp}(t) +
    \int_{(0,t]} \delta \Lambda_C^{\dagger}(u) \,\d M_T^{\sharp}(u),
    \]
    establishing the desired result
    in this case.
    Now, suppose instead that $\tau \in {\cal S};$
    in this case, algebra shows
    \begin{align*}
    \delta M_C^{\dagger}(\tau) & = 
    \delta N_C^{\dagger}(\tau) -
    Y^\dagger(\tau) \delta \Lambda_C^{\dagger}(\tau) \\
    & = 
    \delta M_C^{\sharp}(\tau)
    + Y^\sharp(\tau) \bigl\{
    \delta \Lambda_C^{\sharp}(\tau)
    -  \delta \Lambda_C^{\dagger}(\tau) \bigr\}
    + \delta N_T(\tau) 
     \delta \Lambda_C^{\dagger}(\tau).
    \end{align*}
    Given that $\tau \in {\cal S}$,
    we have $
    \E \{ Y^{\sharp}(\tau) \}  \geq 
    \E \{ Y^{\dagger}(\tau) \} > 0;$
    therefore,
    \begin{align*}
    \delta M_C^{\dagger}(\tau) 
     & = 
    \delta M_C^{\sharp}(\tau)
    + Y^\sharp(\tau) \Bigl\{
    \frac{\E \{ Y^{\dagger}(u) \}}{\E \{ Y^{\sharp}(u) \}} - 1 \Bigr\}
    \delta \Lambda_C^{\dagger}(\tau) 
    + \delta N_T(\tau) 
     \delta \Lambda_C^{\dagger}(\tau) \\
     & = 
     \delta M_C^{\sharp}(\tau)
     - Y^\sharp(\tau) \delta \Lambda_T^{\sharp}(\tau) \delta \Lambda_C^{\dagger}(\tau) 
    + \delta N_T(\tau) 
     \delta \Lambda_C^{\dagger}(\tau) \\
    & = \delta M_C^{\sharp}(\tau)
     + \delta M^{\sharp}_T(\tau) 
     \delta \Lambda_C^{\dagger}(\tau), 
    \end{align*}
    the penultimate step following from 
    \eqref{eq:Ru}. Returning to
    \eqref{eq:MCdag-decomp},
    \begin{align*}
   M_C^{\dagger}(t) & = 
    \biggl\{
    M_C^{\sharp}(\tau-) +
    \int_{(0,\tau)} \delta \Lambda_C^{\dagger}(u) \,\d M_T^{\sharp}(u)
    \biggr\}
    +\delta M_C^{\sharp}(\tau) +
    \delta M^{\sharp}_T(\tau) 
     \delta \Lambda_C^{\dagger}(\tau) \\
     & =
     M_C^{\sharp}(\tau) +
    \int_{(0,\tau]} \delta \Lambda_C^{\dagger}(u) \,\d M_T^{\sharp}(u) \\
    & =
     M_C^{\sharp}(t) +
    \int_{(0,t]} \delta \Lambda_C^{\dagger}(u) \,\d M_T^{\sharp}(u)
    \end{align*}
    as desired.
\end{proof}

\begin{proof}[Proof of \cref{prop:pred-trans}]
    Applying \cref{lem:dag-dm}, the martingale transform is 
    \begin{equation}
        (H \boldsymbol{\cdot} M^{\dagger}_{C})
        = \int_{(0,t]} H(u) \,\d M^{\dagger}_{C}(u) 
        = \int_{(0,t]} H(u) \,\d M^{\sharp}_{C}(u) + \int_{(0,t]} H(u) \delta \Lambda_C^{\dagger} \,\d M^{\sharp}_{T}(u)
        = (H \boldsymbol{\cdot} M^{\sharp}_{C}) + (\delta \Lambda_C^{\dagger} \, H \boldsymbol{\cdot} M^{\sharp}_{T}).
    \end{equation}
    Each summand is a $\F_t$-martingale by Theorem 2.4.4 in \citet{fleming1991counting}, since $\delta \Lambda_C^{\dagger}$ is bounded and non-stochastic and the first assumptions of \cref{prop:pred-trans} correspond to the assumptions of Theorem 2.4.4. The result follows since a sum of martingales is a martingale. 
\end{proof}

Denote $\langle M \rangle$ as the $\F_t$-predictable variation process of an $\F_t$-martingale $M$. 

\begin{proof}[Proof of \cref{prop:pred-covar}]
    We calculate the $\F_t$-predictable variation $\langle M^{\dagger}_{C} \rangle$ and the $\F_t$-predictable covariation $\langle M^{\dagger}_{C}, M^{\sharp}_{T} \rangle$. After, the result follows directly from Theorem 2.4.3 in \citet{fleming1991counting}.

    We start by calculating the $\F_t$-predictable covariation $\langle M^{\dagger}_{C}, M^{\sharp}_{T} \rangle$. 
    Consider the stochastic process
    \begin{align*}
        M_C^{\dagger}(t) M_T^{\sharp}(t)
        & = \left\{ M^\sharp_C(t) + \int_{(0,t]} \delta \Lambda^\dag_C(u) \,\d M_T^{\sharp}(u) \right\} M_T^{\sharp}(t) \\
        & = M^\sharp_C(t) M^\sharp_T(t) + M_T^{\sharp}(t) \int_{(0,t]} \delta \Lambda^\dag_C(u) \,\d M_T^{\sharp}(u). 
    \end{align*}
    We calculate the compensator for each summand using Theorem 2.4.2 and 2.6.1 in \citet{fleming1991counting}. First we see that 
    \begin{equation*}
        \Bigl\langle M^\sharp_C(t), M^\sharp_T(t) \Bigr\rangle
        = -  \int_{(0,t]} Y^{\sharp}(u) \delta \Lambda_T^{\sharp}(u) \,\d \left\{ \int_{(0,u]} Y^{\sharp}(v) \,\d \Lambda_C^{\sharp}(v) \right\} 
        = - \int_{(0,t]} Y^{\sharp}(u) \delta \Lambda_T^{\sharp}(u) \,\d \Lambda_C^{\sharp}(u). 
    \end{equation*}
    Second, 
    \begin{equation*}
        \Bigl\langle M_T^{\sharp}(t), \int_{(0,t]} \delta \Lambda^\dag_C(u) \,\d M_T^{\sharp}(u) \Bigr\rangle
        = \int_{(0,t]} \delta \Lambda^\dag_C(u) \{ 1 - Y^{\sharp}(u) \delta\Lambda_T^{\sharp}(u) \} Y^{\sharp}(u) \,\d \Lambda_T^{\sharp}(u).
    \end{equation*}
    Therefore the following is a $\F_t$-martingale,
    \begin{align*}
        & M_C^{\dagger}(t) M_T^{\sharp}(t)
        + \int_{(0,t]} Y^{\sharp}(u) \delta \Lambda_T^{\sharp}(u) \,\d \Lambda_C^{\sharp}(u)
        - \int_{(0,t]} Y^{\sharp}(u) \delta \Lambda^\dag_C(u) \{ 1 - \delta\Lambda_T^{\sharp}(u) \} \,\d \Lambda_T^{\sharp}(u) \\
        & \hspace{10mm} = M_C^{\dagger}(t) M_T^{\sharp}(t) + \int_{(0,t]} \left\{ \delta \Lambda_C^{\sharp}(u) - \delta \Lambda^\dag_C(u) \{ 1 - \delta\Lambda_T^{\sharp}(u) \} \right\} Y^{\sharp}(u) \,\d \Lambda_T^{\sharp}(u) \\
        & \hspace{10mm} = M_C^{\dagger}(t) M_T^{\sharp}(t),
    \end{align*}
    where the last equality follows by applying \cref{eq:Ru}.

    We now calculate the $\F_t$-predictable variation $\langle M^{\dagger}_{C} \rangle$. 
    Consider the stochastic process
    \begin{align*}
        M_C^{\dagger}(t) M_C^{\dagger}(t)
        & = \left\{ M^\sharp_C(t) + \int_{(0,t]} \delta \Lambda^\dag_C(u) \,\d M_T^{\sharp}(u) \right\}^2 \\
        & \hspace{-10mm} = M^\sharp_C(t) M^\sharp_C(t) + 2 M_C^{\sharp}(t) \int_{(0,t]} \delta \Lambda^\dag_C(u) \,\d M_T^{\sharp}(u) + \left\{ \int_{(0,t]} \delta \Lambda^\dag_C(u) \,\d M_T^{\sharp}(u) \right\}^2. 
    \end{align*}
    We calculate the compensator for each summand using Theorem 2.4.2 and 2.6.1 in \citet{fleming1991counting}. First we see that 
    \begin{equation*}
        \Bigl\langle M^\sharp_C(t) \Bigr\rangle
        = \int_{(0,t]} \{ 1 - \delta \Lambda_C^{\sharp}(u) \} Y^{\sharp}(u) \,\d \Lambda_C^{\sharp}(u)
        = \int_{(0,t]} \{ 1 - \delta \Lambda_C^{\sharp}(u) \} \{ 1 - \delta \Lambda_T^{\sharp}(u) \} Y^{\sharp}(u) \,\d \Lambda_C^{\dagger}(u),
    \end{equation*}
    by applying \cref{eq:Ru}; 
    second, 
    \begin{equation*}
        \Bigl\langle M_C^{\sharp}(t), \int_{(0,t]} \delta \Lambda^\dag_C(u) \,\d M_T^{\sharp}(u) \Bigr\rangle
        = \int_{(0,t]} \{ \delta \Lambda^\dag_C(u) \}^2 \{ 1 - \delta \Lambda_T^{\sharp}(u) \} Y^{\sharp}(u) \,\d \Lambda_T^{\sharp}(u);
    \end{equation*}
    third,
    \begin{equation*}
        \Bigl\langle \int_{(0,t]} \delta \Lambda^\dag_C(u) \,\d M_T^{\sharp}(u) \Bigr\rangle
        = \int_{(0,t]} \delta \Lambda^\dag_C(u) \delta \Lambda^\sharp_C(u) Y^{\sharp}(u) \,\d \Lambda_T^{\sharp}(u). 
    \end{equation*}
    Therefore, due to the bilinearity of $\F_t$-predictable variation, the $\F_t$-predictable variation of $M_C^{\dagger}$ is 
    \begin{align*}
        \langle M_C^{\dagger}(t) \rangle
        & = \left\langle M^\sharp_C(t) + \int_{(0,t]} \delta \Lambda^\dag_C(u) \,\d M_T^{\sharp}(u) \right\rangle \\
        & = \left\langle M^\sharp_C(t) \right\rangle 
        + \left\langle \int_{(0,t]} \delta \Lambda^\dag_C(u) \,\d M_T^{\sharp}(u) \right\rangle 
        + 2 \left\langle M^\sharp_C(t), \int_{(0,t]} \delta \Lambda^\dag_C(u) \,\d M_T^{\sharp}(u) \right\rangle \\
        & = \int_{(0,t]} \{ 1 - \delta \Lambda_C^{\sharp}(u) \} \{ 1 - \delta \Lambda_T^{\sharp}(u) \} Y^{\sharp}(u) \,\d \Lambda_C^{\dagger}(u) 
        + \int_{(0,t]} \delta \Lambda^\dag_C(u) \delta \Lambda_T^{\sharp}(u) \{ 1 - \delta \Lambda_T^{\sharp}(u) \} Y^{\sharp}(u) \,\d \Lambda_C^{\dagger}(u) \\
        & \hspace{25mm} - 2 \int_{(0,t]} \delta \Lambda^\sharp_T(u) \delta \Lambda^\sharp_C(u) Y^{\sharp}(u) \,\d \Lambda_C^{\dag}(u) \\
        & = \int_{(0,t]} \left[ \{ 1 - \delta \Lambda_C^{\sharp}(u) \} \{ 1 - \delta \Lambda_T^{\sharp}(u) \} + \delta \Lambda^\dag_C(u) \delta \Lambda_T^{\sharp}(u) \{ 1 - \delta \Lambda_T^{\sharp}(u) \} - 2 \delta \Lambda^\sharp_T(u) \delta \Lambda^\sharp_C(u) \right] Y^{\sharp}(u) \,\d \Lambda_C^{\dag}(u) \\
        & = \int_{(0,t]} \frac{F^{\sharp}(u) G^{\dagger}(u)}{F^{\sharp}(u-) G^{\dagger}(u-)} Y^{\sharp}(u) \,\d \Lambda_C^{\dag}(u), 
    \end{align*}
    where last equality follows from the simplification
    \begin{align*}
        & \{ 1 - \delta \Lambda_C^{\sharp}(u) \} \{ 1 - \delta \Lambda_T^{\sharp}(u) \} + \delta \Lambda^\dag_C(u) \delta \Lambda_T^{\sharp}(u) \{ 1 - \delta \Lambda_T^{\sharp}(u) \} - 2 \delta \Lambda^\sharp_T(u) \delta \Lambda^\sharp_C(u) \\
        & \hspace{15mm} = \{ 1 - \delta \Lambda_C^{\sharp}(u) \} \{ 1 - \delta \Lambda_T^{\sharp}(u) \} + \delta \Lambda^\sharp_C(u) \delta \Lambda_T^{\sharp}(u)  - 2 \delta \Lambda^\sharp_T(u) \delta \Lambda^\sharp_C(u) \\
        & \hspace{15mm} = 1 - \delta \Lambda_C^{\sharp}(u) - \delta \Lambda_T^{\sharp}(u) \\
        & \hspace{15mm} = 1 - \delta \Lambda_C^{\dagger}(u) \{ 1 - \delta \Lambda_T^{\sharp}(u) \} - \delta \Lambda_T^{\sharp}(u) \\
        & \hspace{15mm} =  \{1  - \delta \Lambda^\sharp_T(u) \} \{1  - \delta \Lambda^\dag_C(u) \} \\
        & \hspace{15mm} = \frac{F^{\sharp}(u) G^{\dagger}(u)}{F^{\sharp}(u-) G^{\dagger}(u-)},
    \end{align*}
    which twice applied \cref{eq:Ru}. 
    Finally, the martingale transform results follow directly from Theorem 2.4.3 in \citet{fleming1991counting}. 
\end{proof}

The next result considers the difference between the $\F_t$-predictable centering terms, derived in \cref{prop:pred-covar}, and the $\G_t$-adapted centering terms, stated in \cref{thm:covar-proc}. The result is strictly algebraic, analogous to \cref{lem:dag-dm}. 

\begin{lem}
\label{prop:pred-covar2}
    For $t>0$, the difference between \cref{eq:half-pred-var} and variation martingale in \cref{eq:pred-var} is
    \begin{equation*}
        \int_{(0,t]} H_1(u) H_2(u) \delta \Lambda_C^{\dagger}(u) Y^{\sharp}(u) \frac{G^{\dagger}(u)}{G^{\dagger}(u-)} \,\d M_T^{\sharp}(u). 
    \end{equation*}
\end{lem}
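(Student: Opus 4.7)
The plan is a direct algebraic computation. Both martingales appearing in \cref{eq:half-pred-var} and \cref{eq:pred-var} carry the same leading product term $(H_1 \boldsymbol{\cdot} M^{\dagger}_C)(t)(H_2 \boldsymbol{\cdot} M^{\dagger}_C)(t)$, so their difference equals the difference of the two centering terms. I will combine these under the common factor $H_1(u) H_2(u) \{G^\dagger(u)/G^\dagger(u-)\} \,\d \Lambda_C^\dagger(u)$, leaving the single bracket $\{F^\sharp(u)/F^\sharp(u-)\} Y^\sharp(u) - Y^\dagger(u)$ to be simplified.

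The core step is to reduce this bracket using two identities already in hand: the product-integral relation $F^\sharp(u)/F^\sharp(u-) = 1 - \delta \Lambda_T^\sharp(u)$, which is immediate from the definition of $F^\sharp$, and the identity $Y^\dagger(u) = Y^\sharp(u) - \delta N_T(u)$ established in \cref{sec:set-up}. Substituting both, the bracket collapses to
\[
\{1 - \delta \Lambda_T^\sharp(u)\} Y^\sharp(u) - Y^\sharp(u) + \delta N_T(u) = \delta N_T(u) - Y^\sharp(u) \delta \Lambda_T^\sharp(u) = \delta M_T^\sharp(u),
\]
using the definition of $M_T^\sharp$.

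The final step is to rewrite the resulting integral $\int_{(0,t]} H_1(u) H_2(u) \{G^\dagger(u)/G^\dagger(u-)\} \delta M_T^\sharp(u) \,\d \Lambda_C^\dagger(u)$ as $\int_{(0,t]} H_1(u) H_2(u) Y^\sharp(u) \{G^\dagger(u)/G^\dagger(u-)\} \delta \Lambda_C^\dagger(u) \,\d M_T^\sharp(u)$, which is the only step requiring mild care. Expanding $\d M_T^\sharp(u) = \d N_T(u) - Y^\sharp(u) \,\d \Lambda_T^\sharp(u)$ in both expressions, each reduces to the same combination: a boundary contribution $H_1(X) H_2(X) \{G^\dagger(X)/G^\dagger(X-)\} \Delta \,\delta \Lambda_C^\dagger(X)$, where I use that $Y^\sharp(X) = 1$ whenever $\Delta = 1$, minus a common atomic sum indexed by the shared discontinuities of $\Lambda_T^\sharp$ and $\Lambda_C^\dagger$. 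Otherwise the proof is routine algebraic bookkeeping driven by the product-integral identity for $F^\sharp$ and the definitions of $Y^\dagger$ and $M_T^\sharp$; there is no substantial obstacle.
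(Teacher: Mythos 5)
Your proof is correct and takes essentially the same route as the paper: cancel the common product term, combine the two centering integrals under one measure, collapse the bracket to $\delta M_T^{\sharp}(u)$ via $Y^{\dagger}(u) = Y^{\sharp}(u) - \delta N_T(u)$ and $F^{\sharp}(u)/F^{\sharp}(u-) = 1 - \delta \Lambda_T^{\sharp}(u)$, and then swap integrand and integrator, a step you in fact justify in more detail (via the matching boundary term and atomic sum) than the paper's one-line assertion. The only difference is that the paper restricts the computation to $t < \tau$, where $F^{\sharp}(u-) > 0$ keeps the ratio well-defined, and defers $t \geq \tau$ as "similar"; you omit this caveat, but it does not affect the substance of the argument.
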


\begin{proof}
    For simplicity we consider the difference in $\F_t$-predictable variation, i.e. so that $H_1, H_2 = 1$. For $t < \tau$, the difference is
    \begin{align*}
        & \left\{  - \int_{(0,t]} \frac{G^{\dagger}(u)}{G^{\dagger}(u-)} Y^{\dagger}(u) \,\d \Lambda_C^{\dagger}(u) \right\}
        - \left\{ - \int_{(0,t]} \frac{F^{\sharp}(u) G^{\dagger}(u)}{F^{\sharp}(u-) G^{\dagger}(u-)} Y^{\sharp}(u) \,\d \Lambda_C^{\dagger}(u) \right\} \\
        & \hspace{10mm} = \int_{(0,t]} \left\{ \frac{F^{\sharp}(u)}{F^{\sharp}(u-)} Y^{\sharp}(u) -  Y^{\dagger}(u) \right\} \frac{G^{\dagger}(u)}{G^{\dagger}(u-)} \,\d \Lambda_C^{\dagger}(u) \\
        & \hspace{10mm} = \int_{(0,t]} \left\{ \frac{F^{\sharp}(u)}{F^{\sharp}(u-)} - \{ 1 - \delta N_T(u) \} \right\} Y^{\sharp}(u) \frac{G^{\dagger}(u)}{G^{\dagger}(u-)} \,\d \Lambda_C^{\dagger}(u) \\
        & \hspace{10mm} = \int_{(0,t]} \left\{ \{ 1 - \delta \Lambda_T^{\sharp}(u) \} - \{ 1 - \delta N_T(u) \} \right\} Y^{\sharp}(u) \frac{G^{\dagger}(u)}{G^{\dagger}(u-)} \,\d \Lambda_C^{\dagger}(u) \\
        & \hspace{10mm} = \int_{(0,t]} \left\{ \delta N_T(u) - \delta \Lambda_T^{\sharp}(u) \right\} Y^{\sharp}(u) \frac{G^{\dagger}(u)}{G^{\dagger}(u-)} \,\d \Lambda_C^{\dagger}(u) \\
        & \hspace{10mm} = \int_{(0,t]} \delta \Lambda_C^{\dagger}(u) Y^{\sharp}(u) \frac{G^{\dagger}(u)}{G^{\dagger}(u-)} \,\d M_T^{\sharp}(u). 
    \end{align*}
    The result follows as the $t \geq \tau$ case follows similarly. 
\end{proof}

\begin{proof}[Proof of \cref{thm:pred-covar-proc}]
    That $(H_1 \boldsymbol{\cdot} M^{\dagger}_{C})(t) \, (H_2 \boldsymbol{\cdot} M^{\sharp}_{T})(t)$ is a local $\F_t$-martingale follows immediately from \cref{prop:pred-covar}; notice that $0$ is both a $\G_t$-adapted and a $\F_t$-predictable centering term. 

    Now, it follows from \cref{prop:pred-covar2} that 
    \begin{align*}
        & (H_1 \boldsymbol{\cdot} M^{\dagger}_{C})(t) \, (H_2 \boldsymbol{\cdot} M^{\dagger}_{C})(t) - \int_{(0,t]} H_1(u) H_2(u) \frac{F^{\sharp}(u)}{F^{\sharp}(u-)} Y^{\dagger}(u) \,\d \Lambda_C^{\dagger}(u) \\
        & \hspace{-5mm} = \left\{ (H_1 \boldsymbol{\cdot} M^{\dagger}_{C})(t) \, (H_2 \boldsymbol{\cdot} M^{\dagger}_{C})(t) - \int_{(0,t]} H_1(u) H_2(u) \frac{F^{\sharp}(u) G^{\dagger}(u)}{F^{\sharp}(u-) G^{\dagger}(u-)} Y^{\sharp}(u) \,\d \Lambda_C^{\dagger}(u) \right\} \\
        & \hspace{20mm} + \int_{(0,t]} H_1(u) H_2(u) \delta \Lambda_C^{\dagger}(u) Y^{\sharp}(u) \frac{G^{\dagger}(u)}{G^{\dagger}(u-)} \,\d M_T^{\sharp}(u).
    \end{align*}
    The first expression is a local $\F_t$-martingale by \cref{prop:pred-covar}, and the second expression is a local $\F_t$-martingale by Theorem 2.4.1 %
    in \citet{fleming1991counting}. The result follows since a sum of local martingales is a local martingale. 
\end{proof}

\subsection{Alternative proofs of previous results}
\label{sec:supp:pred-addntl:alt-proofs}

\begin{proof}[Alternative proof of \cref{prop:dagger-martingale}]
    By \cref{prop:sharp-martingale}, $M_T^{\sharp}$ and $M_C^{\sharp}$ are $\F_t$-martingales. Since $\delta \Lambda^\dag_C$ is non-stochastic and bounded by 1, we see $\int_{(0,\cdot]} \delta \Lambda_C^{\dagger}(u) \,\d M_T^{\sharp}(u)$ is a $\F_t$-martingale \citep[Theorem 1.5.1]{fleming1991counting}. Thus $M_C^{\sharp} +
    \int_{(0,\cdot]} \delta \Lambda_C^{\dagger}(u) \,\d M_T^{\sharp}(u)$ is an $\F_t$-martingale; the result follows from \cref{lem:dag-dm}. 
\end{proof}

\begin{proof}[Alternative proof of \cref{cor:marting-sharp-coerc}]
    Recall that the stochastic processes $M_C^{\dagger}$ and $M_C^{\sharp}$ are indistinguishable if their difference vanishes uniformly in $t$,  that is, if 
    \begin{equation*}
        \P \left\{ M_C^{\dagger}(t) - M_C^{\sharp}(t) = 0 \text{ for all } t>0 \right\}
        = 1.
    \end{equation*}

    The result in \cref{lem:dag-dm} 
    implies $M_C^{\dagger}(t) - M_C^{\sharp}(t) = \int_{(0,t]} \delta \Lambda_C^{\dagger}(u) \,\d M_T^{\sharp}(u)$. In view of the proofs of 
    \cref{cor:lambda-sharp-coerc} and
    \cref{lem:dag-dm}, it suffices
    to    prove that the martingale
       \begin{equation}
         \label{eq:mart-diff}  
   \int_{(0,t]} \delta \Lambda_C^{\dagger}(u) \,\d M_T^{\sharp}(u) = 0
       \end{equation}
 if and only if    
    \begin{equation}
    \label{eq:no-jumps}
    \sum_{0<u \leq t} \delta \Lambda_C^{\sharp}(u)
    \delta \Lambda_T^{\sharp}(u) = 0
    \end{equation}
    for $t \in (0,\tau]$.
    Equivalently, since \eqref{eq:mart-diff} holds if and only if
      \begin{equation}
         \label{eq:mart-diff-quadvar}  
    \left\langle 
    \int_{(0,\tau]} \delta \Lambda_C^{\dagger}(u) \,\d M_T^{\sharp}(u)
    \right\rangle = 0,
    \end{equation}
    by \citet[p. 186]{he1992semimartingale},
    we need only show that \eqref{eq:no-jumps}
    and \eqref{eq:mart-diff-quadvar}  
    are equivalent.
        We have
    \begin{align*}
    \left\langle 
    \int_{(0,\tau]} \delta \Lambda_C^{\dagger}(u) \,\d M_T^{\sharp}(u)
    \right\rangle 
    & = 
    \int_{(0,\tau]} \Bigl( \delta \Lambda_C^{\dagger}(u)
    \Bigr)^2 Y^{\sharp}(u) \,\d \Lambda_T^{\sharp}(u) \\
    & = 
     \sum_{0 < u \leq \tau} \Bigl( \delta \Lambda_C^{\dagger}(u)
    \Bigr)^2 Y^{\sharp}(u) \delta \Lambda_T^{\sharp}(u) \\
   & = (A) + 2 (B) + (C)
      \end{align*}
      where
    \begin{align*}
  (A) & = \sum_{0 < u \leq \tau}Y^{\sharp}(u)  \Bigl( \delta \Lambda_C^{\dagger}(u)
     -\delta \Lambda_C^{\sharp}(u)\Bigr)^2 \delta \Lambda_T^{\sharp}(u) \\
      (B) & = \sum_{0 < u \leq \tau}Y^{\sharp}(u)  \Bigl( \delta \Lambda_C^{\dagger}(u)
     -\delta \Lambda_C^{\sharp}(u)
    \Bigr) \delta \Lambda_C^{\sharp}(u) \delta \Lambda_T^{\sharp}(u) \\
    (C) & =   \sum_{0 < u \leq \tau} Y^{\sharp}(u)  \Bigl( \delta \Lambda_C^{\sharp}(u)
    \Bigr)^2 \delta \Lambda_T^{\sharp}(u).
      \end{align*}
Because $\delta \Lambda_C^{\dagger}(u)
     -\delta \Lambda_C^{\sharp}(u) \geq 0$
     for $u > 0$, each of $(A)$,
     $(B)$, and $(C)$ are nonnegative.
     Moreover,
\cref{cor:lambda-sharp-coerc} 
implies
that $(A) = (B) = 0$ almost surely if and only if
 \eqref{eq:no-jumps} holds. Similarly,
 $(C) = 0$ almost surely if and only if
 \eqref{eq:no-jumps} holds. Hence, the desired result is proved.
\end{proof}

\section{Additional Material for \texorpdfstring{\cref{sec:apps}}{Section 5}}
\label{sec:supp:km-addntl}

\begin{proof}[Proof of \cref{prop:km-var}]
    The influence function of the Kaplan Meier estimator $F^{\sharp}_n(t)$ is
    \begin{equation*}
        D(X,\Delta; t, \P)
        := \frac{I(X>t)}{G^{\dagger}(t)} + \int_{(0,t]} \frac{F^{\sharp}(t)}{F^{\sharp}(u)} \frac{\d M_C^{\dagger}(u)}{G^{\dagger}(u)} - F^{\sharp}(t),
    \end{equation*}
    as shown in \citet{bakry1994lectures, van2011targeted}. Denote $\V$ as the variance and the covariance operator. Therefore the asymptotic variance of the $n^{1/2}$-scaled Kaplan Meier estimator is 
    \begin{equation}
        \V \left\{ \frac{I(X>t)}{G^{\dagger}(t)} \right\} 
        + \V \left\{ \int_{(0,t]} \frac{F^{\sharp}(t)}{F^{\sharp}(u)} \frac{\d M_C^{\dagger}(u)}{G^{\dagger}(u)} \right\} 
        + 2 \cov \left\{ \frac{I(X>t)}{G^{\dagger}(t)}, \int_{(0,t]} \frac{F^{\sharp}(t)}{F^{\sharp}(u)} \frac{\d M_C^{\dagger}(u)}{G^{\dagger}(u)} \right\}, \label{eq:km-var}
    \end{equation}
    which we simplify in this proof. 

    Recall that $\P(X>t) = F^{\sharp}(t) G^{\dagger}(t)$. The first summand in \cref{eq:km-var} is simply the variance of a Bernoulli random variable and hence satisfies
    \begin{equation*}
        \V \left\{ \frac{I(X>t)}{G^{\dagger}(t)} \right\} 
        = \frac{F^{\sharp}(t) G^{\dagger}(t) \left\{ 1 - F^{\sharp}(t) G^{\dagger}(t) \right\}}{ \{ G^{\dagger}(t) \}^2 }. 
    \end{equation*}
    The last summand in \cref{eq:km-var} may be readily simplified as
    \begin{align*}
        \cov \biggl\{ \frac{I(X>t)}{G^{\dagger}(t)}, \int_{(0,t]} \frac{F^{\sharp}(t)}{F^{\sharp}(u)} & \frac{\d N_C(u) - Y^{\dagger}(u) \,\d \Lambda_C^{\dagger}(u)}{G^{\dagger}(u)} \biggr\} \\
        =\, & \frac{1}{G^{\dagger}(t)} \E \left\{ I(X>t) \int_{(0,t]} \frac{F^{\sharp}(t)}{F^{\sharp}(u)} \frac{\d N_C(u) - Y^{\dagger}(u) \,\d \Lambda_C^{\dagger}(u)}{G^{\dagger}(u)} \right\} \\
        =\, & - \frac{1}{G^{\dagger}(t)} \E \left\{ I(X>t) \int_{(0,t]} \frac{F^{\sharp}(t)}{F^{\sharp}(u)} \frac{\d \Lambda_C^{\dagger}(u)}{G^{\dagger}(u)} \right\} \\
        =\, & - F^{\sharp}(t) \int_{(0,t]} \frac{F^{\sharp}(t)}{F^{\sharp}(u)} \frac{\d \Lambda_C^{\dagger}(u)}{G^{\dagger}(u)}.
    \end{align*}
    The first identity leverages that the martingale transform vanishes in expectation from \cref{thm:martingale-transform}, and the next identity leverages the relationship between the indicator being for the event $X>t$ while the martingale transform is over the domain with $u \leq t$. 

    Now, simplifying the second summand in \cref{eq:km-var} requires appealing to \cref{thm:covar-proc} so that
    \begin{align*}
        \V \left\{ \int_{(0,t]} \frac{F^{\sharp}(t)}{F^{\sharp}(u)} \frac{\d M_C^{\dagger}(u)}{G^{\dagger}(u)} \right\}
        & = \E \left[ \left\{ \int_{(0,t]} \frac{F^{\sharp}(t)}{F^{\sharp}(u)} \frac{\d M_C^{\dagger}(u)}{G^{\dagger}(u)} \right\}^2 \right] \\
        & \hspace{-15mm} = \E \left[ \int_{(0,t]} \left\{ \frac{F^{\sharp}(t)}{F^{\sharp}(u) G^{\dagger}(u)} \right\}^2 \frac{G^{\dagger}(u)}{G^{\dagger}(u-)} Y^{\dagger}(u) \,\d \Lambda_C^{\dagger}(u) \right] \\
        & \hspace{-15mm} = \int_{(0,t]} \left\{ \frac{F^{\sharp}(t)}{F^{\sharp}(u) G^{\dagger}(u)} \right\}^2 \frac{G^{\dagger}(u)}{G^{\dagger}(u-)} G^{\dagger}(u-) F^{\sharp}(u) \,\d \Lambda_C^{\dagger}(u) \\
        & \hspace{-15mm} = \int_{(0,t]} \frac{\{ F^{\sharp}(t) \}^2 }{F^{\sharp}(u)} \frac{\d \Lambda_C^{\dagger}(u)}{G^{\dagger}(u)}.
    \end{align*}

    In summary, we have shown that
    \begin{align*}
        \V \{ D(X,\Delta; t, \P) \}
        & = \frac{F^{\sharp}(t) G^{\dagger}(t) \left\{ 1 - F^{\sharp}(t) G^{\dagger}(t) \right\}}{ \{ G^{\dagger}(t) \}^2 } - \int_{(0,t]} \frac{\{ F^{\sharp}(t) \}^2 }{F^{\sharp}(u)} \frac{\d \Lambda_C^{\dagger}(u)}{G^{\dagger}(u)} \\
        & \hspace{-15mm} = \{ F^{\sharp}(t) \}^2 \left[ \frac{1}{F^{\sharp}(t) G^{\dagger}(t)} 
        - 1 
        - \int_{(0,t]} \frac{1}{F^{\sharp}(u)} \,\d \left\{ \frac{1}{G^{\dagger}(u)} \right\} \right] \\
        & \hspace{-15mm} = \{ F^{\sharp}(t) \}^2 \int_{(0,t]} \frac{1}{G^{\dagger}(u-)} \,\d \left\{ \frac{1}{F^{\sharp}(u)} \right\},
    \end{align*}
    by integration by parts \citep[Theorem A.1.2]{fleming1991counting}. 
\end{proof}

\begin{proof}[Proof of \cref{prop:cox-km-covar}]
    It follows from traditional semiparametric theory that the asymptotic covariance is
    \begin{equation}
        \sigma^2(s,t)
        = \V \left\{ \frac{I(X>s)}{G^{\dagger}(s)} + \int_{(0,s]} \frac{F^{\sharp}(s)}{F^{\sharp}(u)} \frac{\d M_C^{\dagger}(u)}{G^{\dagger}(u)} - F^{\sharp}(s), - \int_{(0,t]} \frac{F^{\sharp}(t)}{F^{\sharp}(u)} \frac{\d M_T^{\sharp}(u)}{G^{\dagger}(u-)} \right\}, \label{eq:km-covar-prf}
    \end{equation}
    where we recall that $\V$ is the covariance operator. This expression can be decomposed into three covariances, and we evaluate them each in turn.
    
    The first expression in \cref{eq:km-covar-prf} is
    \begin{align*}
        \V \biggl\{ \frac{I(X>s)}{G^{\dagger}(s)}, \int_{(0,t]} & \frac{F^{\sharp}(t)}{F^{\sharp}(u)} \frac{\d M_T^{\sharp}(u)}{G^{\dagger}(u-)} \biggr\}
        = \frac{F^{\sharp}(t)}{G^{\dagger}(s)} \E \left\{ I(X>s) \int_{(0,t]} \frac{\d N_T(u) - Y^{\sharp}(u) \,\d \Lambda_T^{\sharp}(u)}{F^{\sharp}(u) G^{\dagger}(u-)} \right\} \\
        & = \frac{F^{\sharp}(t)}{G^{\dagger}(s)} \E \left\{ - I(X > s) \int_{(0,s\wedge t]} \frac{\d \Lambda_T^{\sharp}(u)}{F^{\sharp}(u) G^{\dagger}(u-)} + \int_{(s,t]} \frac{\d N_T(u) - Y^{\sharp}(u) \,\d \Lambda_T^{\sharp}(u)}{F^{\sharp}(u) G^{\dagger}(u-)} \right\} \\
        & = - F^{\sharp}(s) F^{\sharp}(t) \int_{(0,s \wedge t]} \frac{\d \Lambda_T^{\sharp}(u)}{F^{\sharp}(u) G^{\dagger}(u-)} + \frac{F^{\sharp}(t)}{G^{\dagger}(s)} \E \left\{ \int_{(s,t]} \frac{\d N_T(u) - Y^{\sharp}(u) \,\d \Lambda_T^{\sharp}(u)}{F^{\sharp}(u) G^{\dagger}(u-)} \right\} \\
        & = - F^{\sharp}(s) F^{\sharp}(t) \int_{(0,s \wedge t]} \frac{\d \Lambda_T^{\sharp}(u)}{F^{\sharp}(u) G^{\dagger}(u-)}. 
    \end{align*}
    The second identity follows by decomposing the set $(0,t] = (0, s \wedge t] \cup (s, t]$.

    The second expression in \cref{eq:km-covar-prf} is
    \begin{align*}
        \V \biggl\{ \int_{(0,s]} \frac{F^{\sharp}(s)}{F^{\sharp}(u)} \frac{\d M_C^{\dagger}(u)}{G^{\dagger}(u)}, & \int_{(0,t]} \frac{F^{\sharp}(t)}{F^{\sharp}(u)} \frac{\d M_T^{\sharp}(u)}{G^{\dagger}(u-)} \biggr\}
        = \E \left\{ \int_{(0,s]} \frac{F^{\sharp}(s)}{F^{\sharp}(u)} \frac{\d M_C^{\dagger}(u)}{G^{\dagger}(u)} \int_{(0,t]} \frac{F^{\sharp}(t)}{F^{\sharp}(u)} \frac{\d M_T^{\sharp}(u)}{G^{\dagger}(u-)} \right\} 
        = 0,
    \end{align*}
    by applying \Cref{thm:covar-proc}.

    The last expression in \cref{eq:km-covar-prf} also vanishes as it is the expectation of a martingale transform. 
    Therefore the asymptotic covariance is
    \begin{equation*}
        F^{\sharp}(s) F^{\sharp}(t) \int_{(0, s \wedge t]} \frac{\d \Lambda_T^{\sharp}(u)}{F^{\sharp}(u) G^{\dagger}(u-)} 
        = F^{\sharp}(s) F^{\sharp}(t) \int_{(0, s \wedge t]} \frac{1}{G^{\dagger}(u-)} \,\d \left\{ \frac{1}{F^{\sharp}(u)} \right\}. \label{eq:covar-almost-simpl}
    \end{equation*}

\end{proof}

\end{document}